
\documentclass[aop]{imsart}

\RequirePackage{amsthm,amsmath,amsfonts,amssymb}
\RequirePackage[numbers]{natbib}

\usepackage{amsmath, amsthm, amsfonts, amssymb, amsbsy, bigstrut, graphicx, enumerate,  upref, longtable, comment, booktabs, array, caption, subcaption}









\usepackage[breaklinks]{hyperref}

\startlocaldefs
\numberwithin{equation}{section}
\theoremstyle{plain}
\newtheorem{thm}{Theorem}[section]
\newtheorem{lmm}[thm]{Lemma}
\newtheorem{cor}[thm]{Corollary}

\theoremstyle{remark}

\newtheorem{remark}[thm]{Remark}

\newcommand{\tf}{\tilde{f}}

\newcommand{\ee}{\mathbb{E}}

\newcommand{\mf}{\mathcal{F}}

\newcommand{\rr}{\mathbb{R}}

\newcommand{\zz}{\mathbb{Z}}





\numberwithin{equation}{section}

\newcommand{\Z}{\mathbb{Z}}

\newcommand{\R}{\mathbb{R}}

\newcommand{\E}{\mathbb{E}}

\renewcommand{\P}{\mathbb{P}}

\renewcommand{\tilde}{\widetilde}

\newcommand{\poly}{\phi_{\textup{poly}}}

\newcommand{\fpoly}{f^{\textup{poly}}}


\endlocaldefs

\begin{document}

\begin{frontmatter}
\title{An invariance principle for the 1D KPZ equation}
\runtitle{Invariance principle for 1D KPZ}

\begin{aug}
\author[A]{\fnms{Arka}~\snm{Adhikari}\ead[label=e1]{arkaa@stanford.edu}}
\and
\author[B]{\fnms{Sourav}~\snm{Chatterjee}\ead[label=e2]{souravc@stanford.edu}}
\address[A]{Department of Mathematics, Stanford University, Stanford, CA\printead[presep={,\ }]{e1}}

\address[B]{Departments of Mathematics and Statistics, Stanford University, Stanford, CA\printead[presep={,\ }]{e2}}
\end{aug}

\begin{abstract}
Consider a discrete one-dimensional random surface whose height at a point grows as a function of the heights at neighboring points plus an independent random noise. Assuming that this function is equivariant under constant shifts, symmetric in its arguments, and at least six times continuously differentiable in a neighborhood of the origin, we show that as the variance of the noise goes to zero, any such process converges to the Cole--Hopf solution of the 1D KPZ equation under a suitable scaling of space and time. This proves an invariance principle for the 1D KPZ equation, in the spirit of Donsker's invariance principle for Brownian motion.
\end{abstract}

\begin{keyword}[class=MSC]
\kwd[Primary ]{60F05}
\kwd{82C05}
\end{keyword}

\begin{keyword}
\kwd{KPZ equation}
\kwd{scaling limit}
\kwd{invariance principle}
\kwd{KPZ universality}
\end{keyword}

\end{frontmatter}


\section{Introduction}\label{sec:result}
\subsection{Main result}\label{sec:main}
Let $\Z$ be the set of integers and $\Z_+$ be the set of nonnegative integers. Let $(y(x,t))_{x\in \Z, t\in \Z_+}$ be a collection of i.i.d.~random variables with mean zero and finite moment generating function in a neighborhood of zero. We will collectively refer to these variables as the `noise variables'. Let $\psi:\R^2 \to \R$ be any function. Consider a growing random surface $f_N:\Z_+\times \Z \to \R$ defined through the recursion
\begin{align}\label{eq:fndef}
f_N(x, t) = \psi(f_N(x-1,t-1), f_N(x+1,t-1)) + N^{-1/4}y(x, t), 
\end{align}
with $f_N(x,0) = 0$ for all $x$, where $N$ is an integer that we will eventually send to infinity. Here $f_N(x,t)$ denotes the height of the surface at location $x$ at time $t$. The above equation means that this height is a function of the heights at location $x-1$ and $x+1$ at time $t-1$, plus a random noise. We assume that $\psi$ has the following properties.
\begin{itemize}
    \item {\it Equivariance under constant shifts.} For any $u,v,c\in \R$, $\psi(u+c,v+c) = \psi(u,v)+c$.
    \item {\it Symmetry.} For all $u,v\in \R$, $\psi(u,v) = \psi(v,u)$.
    \item {\it Regularity.} $\psi$ is at least six times continuously differentiable in a neighborhood of the origin. 
\end{itemize}
The above assumptions are natural from a physical point of view. An example of a $\psi$ satisfying the above assumptions is 
\begin{align}\label{eq:disc1}
\psi(u,v) = \frac{u+v}{2}+(u-v)^2,
\end{align}
which represents a `discrete version' of KPZ growth. Another example, considered in the original paper of Kardar, Parisi and Zhang~\cite{kardaretal86}, is 
\begin{align}\label{eq:disc2}
\psi(u,v) = \frac{u+v}{2} + \sqrt{1+(u-v)^2}.
\end{align}
Our main result is that under the above assumptions, if we subtract off a certain deterministic multiple of $t$ from $f_N(x,t)$, the resulting process converges in law to the Cole--Hopf solution of the 1D KPZ equation as $N\to \infty$ under parabolic scaling of space and time (see Subsection \ref{sec:background} for background on KPZ). We now give a careful statement of this result.

Let $\R$ be the set of real numbers and $\R_+$ be the set of nonnegative real numbers. We will now define a process $\tf_N :\R\times \R_+\to \R$ by rescaling space and time in the definition of $f_N$ and subtracting off a deterministic linear drift term. First, we need to define two constants. The first constant is 
\[
\beta := \partial_1^2 \psi(0,0),
\]
where $\partial_1^2$ denotes the second partial derivative in the first coordinate. For $x\in \Z$ and $t\in \Z_+$, let $p(x,t)$ denote the probability that a simple symmetric random walk on $\Z$, started at $0$ at time $0$, is at $x$ at time $t$. Let 
\[
\Delta(x,t) := p(x+1,t) - p(x-1,t).
\]
Next, define 
\begin{align}\label{eq:cdef}
c := \frac{1}{24} \partial_1^4 \psi(0,0) + \frac{\beta^3}{12}.
\end{align}
Finally, we define the second constant
\begin{align}\label{eq:vdef}
V := c\biggl[ \sum_{x\in \Z}\sum_{t=0}^\infty \Delta(x,t)^4 (\mu_4 - \mu_2^2)  + \biggl(\sum_{x\in \Z}\sum_{t=0}^\infty  \Delta(x,t)^2 \mu_2 \biggr)^2\biggr],
\end{align}
where $\mu_k$ denotes the $k^{\mathrm{th}}$ moment of the noise variables. 

Having defined $\beta$ and $V$, we now define the rescaled and renormalized surface growth process $\tf_N:\R\times\R_+\to \R$. For  any $(x,t)\in \R \times \R_+$ such that $x$ is an integer multiple of $N^{-1/2}$ and $t$ is an integer multiple of $N^{-1}$, let 
\begin{align}
\tf_N(x,t) &:= f_N(\sqrt{N} x, N t) \notag \\
&\qquad - \biggl(V + \frac{1}{2}\beta N^{1/2} \mu_2 + \frac{1}{6}\beta^2 N^{1/4} \mu_3 + \frac{1}{24}\beta^3 (\mu_4-3\mu_2^2) + N \psi(0,0)\biggr)t.\label{eq:tfndef0}
\end{align}
Note that the renormalization term depends only on the first four moments of the noise variables. 
For all other $(x,t)$, define $\tf_N(x,t)$ by linear interpolation. (The exact method of linear interpolation will be described in Section \ref{sec:mainproof}.) 
The following is the main result of this paper. 
\begin{thm}\label{thm:mainresult}
Let $\tf_N$ be defined as above, and suppose that $\beta \ne 0$. Then the $C(\R\times \R_+)$-valued random function $\exp(\beta \tf_N)$
converges in law as $N \to \infty$ to a solution $\mathcal{Z}$ of the stochastic heat equation with multiplicative noise
\begin{equation}\label{eq:she}
\partial_t  \mathcal{Z}  = \frac{1}{2} \partial_x^2  \mathcal{Z} + \sqrt{2\mu_2} \beta \mathcal{Z}\xi, \ \ \   \mathcal{Z}(0,\cdot) \equiv 1, 
\end{equation}
where $\xi$ is standard space-time white noise. Since $\beta^{-1} \log \mathcal{Z}$ is the Cole--Hopf solution of the KPZ equation displayed in equation \eqref{eq:kpzequation} below, this means that $\tf_N$ converges in law to the Cole--Hopf solution of the KPZ equation. The topology on $C(\R\times \R_+)$ that we use here is the topology of uniform convergence on compact sets. If $\beta = 0$, then $\tf_N$ converges in law to a solution $h$ of the stochastic heat equation with additive noise 
\begin{equation}\label{eq:she2}
    \partial_t h= \frac{1}{2} \partial_x^2 h + \sqrt{2\mu_2} \xi, \ \ \ h(0,\cdot) \equiv 0. 
\end{equation}

\end{thm}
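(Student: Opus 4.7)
The plan is to linearize via the Cole--Hopf transform: set $\mathcal{Z}_N(x,t) := \exp(\beta \tf_N(x,t))$ and prove $\mathcal{Z}_N \to \mathcal{Z}$ in law. Equivariance and symmetry of $\psi$ imply the representation $\psi(u,v) = \tfrac{u+v}{2} + \phi(\tfrac{u-v}{2})$ for some even function $\phi \in C^6$ with $\phi(0)=\psi(0,0)$ and $\phi''(0)=4\beta$. Combined with the identity $\sqrt{ab}\,\cosh(\tfrac12\log(a/b))=\tfrac{a+b}{2}$ (valid for $a,b > 0$), the recursion \eqref{eq:fndef} rewrites as a multiplicative discrete stochastic heat equation
\begin{equation*}
\mathcal{Z}_N(x,t) = \tfrac12\bigl(\mathcal{Z}_N(x-N^{-1/2}, t-N^{-1}) + \mathcal{Z}_N(x+N^{-1/2}, t-N^{-1})\bigr)\, M_N(x,t),
\end{equation*}
where $M_N$ is an explicit multiplicative noise factor depending on $y(\sqrt{N}x, Nt)$ and the spatial increment $\Delta_x f_N := f_N(\sqrt{N}x-1,Nt-1)-f_N(\sqrt{N}x+1,Nt-1)$, and also absorbs the drift subtraction from \eqref{eq:tfndef0}.

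The heart of the proof is a careful Taylor expansion of $M_N$. Since $\tf_N$ is expected to be H\"older-$1/2$ in space, $\Delta_x f_N$ has typical size $N^{-1/4}$, as does $\beta N^{-1/4}y$. Because $\phi$ is even and the sech identity contributes only even powers, expanding both small ingredients up to sixth order produces a series whose centered part has $L^2$-variance of order $2\mu_2\beta^2/N$ per site (matching the noise coefficient in \eqref{eq:she}), and whose mean cancels the drift subtraction in \eqref{eq:tfndef0} up to errors of order $N^{-3/2}$. The three cumulant-like terms involving $\mu_2, \mu_3, (\mu_4-3\mu_2^2)$ come from the expansion of $\mathbb{E}\exp(\beta N^{-1/4}y)$; the $N\psi(0,0)$ term handles the constant drift $\phi(0)$ per unit time; and the constant $V$ arises from the expected value of the quadratic Taylor term in $\Delta_x f_N$, which via the Duhamel representation $\Delta_x f_N \approx N^{-1/4}\sum_{s,z} \Delta(z,s) y(\cdot,\cdot)$ and Isserlis-type identities for moments of sums of i.i.d.\ variables produces exactly the expression \eqref{eq:vdef}.

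With this discrete SHE in hand, convergence to the continuum SHE \eqref{eq:she} is carried out by the method of moments / chaos expansion: iterate the recursion to write $\mathcal{Z}_N(x,t)$ as a discrete random-walk Duhamel sum whose joint moments $\mathbb{E}\bigl[\prod_i \mathcal{Z}_N(x_i,t_i)\bigr]$ reduce to sums over pairings of independent random walks, with each ``collision'' contributing $2\mu_2\beta^2$; the limit agrees term-by-term with the Wiener chaos expansion of the solution $\mathcal{Z}$ of \eqref{eq:she}. Tightness in $C(\mathbb{R}\times\mathbb{R}_+)$ with the compact-open topology follows from the same moment bounds via Kolmogorov's criterion. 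The degenerate case $\beta=0$ is handled separately and more directly, as a CLT for the noise summed against the kernel $\Delta(\cdot,\cdot)$ using the discrete heat-kernel Duhamel formula.

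The principal obstacle is that the noise is non-Gaussian, so one cannot quote the Bertini--Giacomin framework nor rely on Malliavin/Wiener-chaos technology. Controlling the sixth-order Taylor remainders uniformly in $N$ requires a priori H\"older-type moment bounds on $\tf_N$---exponent $1/2-\ve$ in space and $1/4-\ve$ in time, with sufficiently good tails---that must be established by a self-improving bootstrap using the discrete heat-kernel Duhamel representation together with the finite MGF hypothesis on the noise. This bootstrap, combined with the careful bookkeeping needed to ensure that no extra uncanceled drift is generated by non-Gaussian cumulants of order five or six, is the core technical content of the proof.
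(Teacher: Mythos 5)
Your approach is genuinely different from the paper's, but it contains a gap that is not a mere technicality. You propose to write $\mathcal{Z}_N = \exp(\beta \tf_N)$ as a discrete multiplicative SHE
\[
\mathcal{Z}_N(x,t) = \tfrac12\bigl(\mathcal{Z}_N(x-N^{-1/2},t-N^{-1}) + \mathcal{Z}_N(x+N^{-1/2},t-N^{-1})\bigr)\,M_N(x,t),
\]
and then iterate to get a ``Duhamel sum'' whose joint moments reduce to sums over pairings of independent random walks. This chaos-expansion step only works when $M_N(x,t)$ is independent of $\{\mathcal{Z}_N(\cdot,s): s<t\}$. In your setup it is not: $M_N$ depends on $\Delta_x f_N = \tfrac{1}{\beta}\log\bigl(\mathcal{Z}_N(x-N^{-1/2},t-N^{-1})/\mathcal{Z}_N(x+N^{-1/2},t-N^{-1})\bigr)$, i.e.\ on the field itself, precisely because a general $\psi$ does not match the polymer nonlinearity beyond second order. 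Iterating the recursion therefore yields a genuinely nonlinear expansion, not a Wiener-chaos series, and the moment computations do not factor into random-walk pairings. You cannot remedy this by Taylor expanding $M_N$ and replacing the quartic term by its mean, because that replacement is exactly the hard step: the quartic in $\Delta_x f_N$ accumulates to an $O(1)$ contribution over $N$ time steps, and its concentration must be proved, not assumed. There is also a circularity in your bootstrap: you need spatial H\"older-$(1/2-\varepsilon)$ bounds on $\tf_N$ to control the Taylor remainders, but those bounds are themselves proved in the paper via the Duhamel expansion of the \emph{polymer} partition function (Lemma~\ref{lem:PseudoChaos}, Theorem~\ref{thm:upperbnd}), which is available only because the polymer recursion is linear. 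A direct bootstrap on the nonlinear $f_N$ is not set up.

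The paper sidesteps all of this by a comparison argument: it introduces the polymer model $\fpoly$ (for which $M_N$ really is independent noise, so [\citealp{albertsetal14b}] applies), writes $f = \fpoly + Y + \delta$, shows by induction that $\delta = o(1)$ uniformly with high probability (Theorem~\ref{thm:errorprop2}), and shows that the renormalization term $Y$ concentrates on $Vt/N$ (Lemmas~\ref{lem:LLN}, \ref{lem:Yconc}). The only new scaling limit that must be established is that of $Y$, which is a polynomial functional of the independent noise, so the moment method does apply. Your route is the natural first attempt, but the field-dependence of $M_N$ and the circular dependence between regularity and moment bounds are precisely the obstructions the paper's polymer-comparison scheme is designed to avoid. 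If you wanted to carry out a direct argument you would need, at a minimum, to (i) prove a priori H\"older bounds on $\tf_N$ without invoking any linear structure, and (ii) show that the nonlinear corrections to the iteration converge (after recentering) to a deterministic drift, both of which are nontrivial and are not contained in your sketch.

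Two smaller issues: you attribute $V$ to ``the expected value of the quadratic Taylor term in $\Delta_x f_N$,'' but the quadratic term produces the divergent drift $\tfrac12\beta N^{1/2}\mu_2 t$; the finite constant $V$ comes from the fourth-order term (cf.\ \eqref{eq:cdef}, \eqref{eq:vdef}). And your stated variance $2\mu_2\beta^2/N$ per site for the centered part of $M_N$ does not match the natural scaling (the per-site variance is $\beta^2\mu_2 N^{-1/2}$); this is a sign that the recentering bookkeeping in the sketch has not been carried out.
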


\begin{remark}
The white noise that appears in equation \eqref{eq:she} has the same properties as the white noise appearing in \cite[Theorem 2.1]{albertsetal14b}. See \cite[Section 3.2]{albertsetal14b} for the precise definition.
\end{remark}
\begin{remark} The result and method could potentially be generalized to other initial types of initial data. Our result is perturbative, so if it is known that some explicitly solvable model will converge to the KPZ equation with some non-zero intitial data, we could generalize the type of interaction to find another interaction that will still converge to the KPZ equation.
\end{remark}

\begin{remark}
The solution of the stochastic heat equation with multiplicative noise \eqref{eq:she} can be explicitly written down using a chaos expansion, as in \cite[Theorem 2.7]{albertsetal14b}. The solution of the stochastic heat equation with additive noise \eqref{eq:she2} is easy to write down explicitly as a convolution of the heat kernel and white noise. 
\end{remark}
\begin{remark}
Just for fun, let us compute the constants appearing in the renormalization term when $\psi$ is given by \eqref{eq:disc1} and \eqref{eq:disc2}. First, suppose that $\psi$ is the function displayed in \eqref{eq:disc1}. Then $\partial_1^2 \psi\equiv 2$, and so $\beta=2$. Also, $\partial_1^4 \psi \equiv 0$, and hence $c= \beta^3/12 = 2/3$. Thus, 
\[
V = \frac{2}{3}(C_1 \mu_4 + C_2 \mu_2^2),
\]
where $C_1$ and $C_2$ are absolute constants given by the formulas 
\[
C_1 = \sum_{x\in \Z}\sum_{t=0}^\infty \Delta(x,t)^4, \ \ \  C_2 = \biggl(\sum_{x\in \Z} \sum_{t=0}^\infty \Delta(x,t)^2\biggr)^2 - C_1.
\]
(It is not clear if $C_1$ and $C_2$ can be evaluated in closed form. The numerical values, calculated by summing up to $t=10000$, are roughly $C_1 \approx 2.16$ and $C_2\approx 13.70$.) Next, consider the $\psi$ displayed in equation~\eqref{eq:disc2}. For this $\psi$, note that
\[
\partial_1^2 \psi(u,v) = \frac{1}{\sqrt{1+(u-v)^2}} - \frac{(u-v)^2}{(1+(u-v)^2)^{3/2}},
\]
which gives $\beta = \partial_1^2\psi(0,0) = 1$. Differentiating further, we get
\begin{align*}
\partial_1^4 \psi(u,v) = -\frac{3}{(1+(u-v)^2)^{3/2}} + \text{ some multiple of $u-v$},
\end{align*}
which gives $\partial_1^4 \psi(0,0) = -3$, and hence $c = -1/24$. Thus, $V = -(C_1\mu_4+C_2\mu_2^2)/24$. 
\end{remark}



\subsection{Background}\label{sec:background}
Let $f(x,t)$ denote the height of an evolving one-dimensional random interface at time $t\in \R_+$ and location $x\in \R$, where $\R$ is the set of real numbers and $\R_+$ is the set of nonnegative real numbers. The evolution of the interface is said to follow 1D Kardar--Parisi--Zhang (KPZ) equation~\cite{kardaretal86} if it formally satisfies the stochastic partial differential equation
\[
\partial_t f = \alpha \partial_x^2 f + \beta (\partial_x f)^2 + \gamma \xi,
\]
where $\xi$ is space-time white noise, and $\alpha$, $\beta$ and $\gamma$ are real-valued parameters. If $\alpha$ and $\gamma$ are nonzero, then by suitably scaling space and time, we can convert the above equation to the equation 
\begin{align}\label{eq:kpzequation}
\partial_t f = \frac{1}{2}\partial_x^2 f + \frac{\beta }{2}(\partial_x f)^2 + \sqrt{2}\xi.
\end{align}
which has only a single parameter, $\beta$. 
One can give a rigorous meaning to \eqref{eq:kpzequation} by declaring that a solution $f$ can be obtained as $f = \beta^{-1}\log\mathcal{Z}$, where $\mathcal{Z}$ is a solution to the stochastic heat equation with multiplicative noise~\eqref{eq:she} with $\mu_2=1$, which is a rigorously defined SPDE~\cite{walsh86, mueller91}. Indeed, a formal calculation using It\^{o} calculus shows that if $\mathcal{Z}$ solves \eqref{eq:she} with $\mu_2=1$, then $f$ must solve \eqref{eq:kpzequation}. This is known as the `Cole--Hopf solution' of the 1D KPZ equation, first proposed in \cite{bertinigiacomin97}. It is known to be equivalent to the pathwise solutions constructed later using the theory of regularity structures~\cite{hairer13, hairer14}.

The `weak universality conjecture' for the 1D KPZ equation says that any 1D interface growth process that is driven by microscopic fluctuations, where the heights at neighboring points have a nontrivial effect on the growth of the height at a point, should converge to the KPZ equation in some suitable scaling limit~\cite{bertinietal94}. This is admittedly rather ill-posed, but it is one of the things that make the KPZ equation an object of central  interest. 

There is now considerable evidence in favor of the weak universality conjecture, mainly in the form of rigorously proved convergences of various discrete growth models to the KPZ equation in a suitable space-time scaling limit. Some examples are:
\begin{itemize}
\item Asymmetric exclusion processes in the weakly asymmetric limit~\cite{bertinigiacomin97, amiretal11, dembotsai16, yang20, yang20c, yang21}.
\item Models belonging to the class of stochastic vertex models~\cite{borodincorwin14, corwintsai17,corwinghosaletal18,yierlin19}.
\item Directed random polymers in the intermediate disorder regime~\cite{albertsetal10, albertsetal14a, albertsetal14b, morenofloresetal13}.
\item A large class of stationary, weakly asymmetric, conservative particle systems~\cite{goncalvesjara14}. The limit here is the energy solution of the KPZ equation, which was later shown to be unique in \cite{gubinelliperkowski17}. A more general result along the same line was proved later in \cite{diehletal17}.
\item KPZ equation with smoothed nonlinearity, taken to the limit where smoothing is removed~\cite{funakiquastel15}.
\item The KPZ equation with $(\partial_x f)^2$ replaced by $F(\partial_x f)$ for some general nonlinear function $F$, under appropriate limits of scaling space and time~\cite{hairerquastel18, hairerxu19}, confirming conjectures from \cite{halpinhealyzhang95, krugspohn91}.
\end{itemize}
A more complete list of references with a more extensive discussion can be found in~\cite[Section 6]{corwinshen20}. Of the papers cited above, our result is perhaps most closely related to the results of \cite{hairerquastel18, hairerxu19}, which therefore deserve some elaboration. In \cite{hairerquastel18}, the following two classes of SPDEs were considered:
\begin{align}\label{eq:hq1}
\partial_t f_\epsilon = \partial_x^2 f_\epsilon + F(\partial_x f_\epsilon) + \sqrt{\epsilon} \xi,
\end{align}
and 
\begin{align}\label{eq:hq2}
\partial_t f_\epsilon = \partial_x^2 f_\epsilon + \sqrt{\epsilon} F(\partial_x f_\epsilon) + \xi,
\end{align}
where $\xi$ is white noise, $\epsilon$ is a parameter, and $F$ is an even function which was taken to be a polynomial in \cite{hairerquastel18} and extended to a larger class of functions in \cite{hairerxu19}. If $f_\epsilon$ is a solution to \eqref{eq:hq1}, then it was shown in \cite[Theorem 1.1]{hairerquastel18} that the rescaled map $(x,t)\mapsto f_\epsilon(\epsilon^{-1} x, \epsilon^{-2}t)$ converges in law to a solution of the KPZ equation as $\epsilon\to 0$, after subtracting off suitable renormalization terms. On the other hand, if $f_\epsilon$ is a solution to \eqref{eq:hq2}, then \cite[Theorem 1.2]{hairerquastel18} shows that the rescaled map $(x,t)\mapsto \sqrt{\epsilon}f_\epsilon(\epsilon^{-1} x, \epsilon^{-2}t)$ converges in law to a solution of the KPZ equation as $\epsilon\to 0$, after subtracting off suitable renormalization terms. These results hold when $F''(0)\ne 0$. If $F''(0)=0$, then a different scaling is needed, depending on the smallest $k$ such that the $k^{\textup{th}}$ derivative of $F$ at $0$ is nonzero.

The limit corresponding to \eqref{eq:hq1} is known as the `intermediate disorder scaling limit', while the one corresponding to \eqref{eq:hq2} is known as the `weakly asymmetric scaling limit'. In a sense, our Theorem \ref{thm:mainresult} can be viewed as a version of weak universality for the 1D KPZ equation in the intermediate disorder regime.  The reason is that under the equivariance and symmetry assumptions, $\psi$ can be expressed as
\[
\psi(u,v) = \frac{u+v}{2} + \phi(u-v)
\]
for some even function $\phi$ (see details in Section \ref{sec:prelim}), which implies that the recursion \eqref{eq:fndef} can be seen as a discretized version of \eqref{eq:hq1}. The major difference between the framework of \cite{hairerquastel18, hairerxu19} and ours is that we start from discrete growth processes rather than solutions of SPDEs. Two other differences are that we consider general i.i.d.~noise instead of Gaussian noise, and our argument does not make use of heavy machinery like regularity structures. 

From a slightly different perspective, Theorem \ref{thm:mainresult} is an invariance principle for the 1D KPZ equation, analogous to Donsker's invariance principle for Brownian motion~\cite{donsker51}. Indeed,  consider a $(0+1)$-dimensional process $f_N(t)$ growing according to the recursion 
\[
f_N(t) = \psi(f_N(t-1)) + N^{-1/2} y(t),
\]
where now $(y(t))_{t\in \Z_+}$ are i.i.d.~mean zero random variables and $\psi$ is a function from $\R$ into $\R$. Suppose that $\psi$ is equivariant under constant shifts. Since $\psi$ is now a function of only one variable, equivariance under constant shifts implies that $\psi$ must be the form $\psi(u) = u + c$ for some $c\in \R$. Thus, if $f_N(0)=0$, then
\[
f_N(t) = ct + N^{-1/2}\sum_{s=1}^t y(s). 
\]
So, if we rescale and renormalize $f_N$ as
\[
\tf_N(t) := f_N(Nt) - cNt = N^{-1/2}\sum_{s=1}^{Nt} y(s),
\]
then by Donsker's theorem, $\tf_N$ converges in law to Brownian motion. This way of writing Donsker's theorem makes it clear why Theorem \ref{thm:mainresult} is a `KPZ version' of it. While Donsker's theorem shows invariance of the scaling limit under various choices of the law of the noise variables, our result proves invariance both under changing the law of the noise variables and the choice of $\psi$ (subject to the constraints of equivariance under constant shifts, symmetry and regularity).

A natural problem, then, is to investigate whether the assumptions of Theorem \ref{thm:mainresult} can be relaxed. Donsker's theorem requires the noise variables to have only finite second moments, whereas Theorem \ref{thm:mainresult} needs finite moment generating function in a neighborhood of zero. Can this be relaxed? Also, in \eqref{eq:fndef}, can $f_N(x,t)$ be a function of $f_N(x\pm i,t-1)$ for $i=0, 1,\ldots,k$ for some fixed $k$ plus noise, instead of just $f(x\pm 1, t-1)$ as we currently have? Depending on the type of interaction, such as a higher order interaction, it may be sufficient to perturb from the directed polymer itself. Generalization beyond i.i.d.~noise is also an interesting question. Does it suffice to have the noise `homogenize' in a certain sense, as it happens for exclusion processes? 

A different kind of invariance principle, similar to Donsker's theorem in that the invariance is only in the law of the noise, was obtained in~\cite{hairershen17}. Roughly speaking, the main result of \cite{hairershen17} is that a solution of the KPZ equation with non-Gaussian noise and coefficients depending in a certain way on a parameter $\epsilon$, converges to a solution of the usual KPZ equation (with white noise) as $\epsilon \to 0$. 

The idea of looking at discrete growth processes growing according to \eqref{eq:fndef}, with $\psi$ satisfying the equivariance, symmetry and other conditions, was introduced in series of papers~\cite{chatterjee21,chatterjee21c,chatterjeesouganidis21}. In \cite{chatterjee21}, it was shown that in the absence of noise, any such process converges to a solution of the deterministic KPZ equation under parabolic scaling. This result was extended to a larger class of deterministically growing processes, with novel scaling limits, in \cite{chatterjeesouganidis21}. A kind of `local KPZ universality' result for such processes was established in \cite{chatterjee21c}.

In the context of the above discussion, it should be noted that weak KPZ universality is fundamentally different than `strong KPZ universality', which says that the `long time' scaling limit of a large class of growth processes, which are typically driven by non-vanishing noise, is a Markov process known as the `KPZ fixed point'~\cite{matetskietal21, quastelsarkar22}. The strong universality conjecture is at present well out of the reach of available techniques. Its predictions have been verified only in integrable models where exact calculations are possible (see \cite{ganguly22, spohn20, remenik22} for surveys).

Incidentally, Theorem \ref{thm:mainresult} has implications about convergence of discretized versions of the KPZ equation to the continuum limit. For example, a natural discretization would be to take something like \eqref{eq:disc1}. 
Theorem \ref{thm:mainresult} shows that such discretizations converge to the Cole--Hopf solution of the KPZ equation after subtracting off the correct renormalization term, which is explicitly given by the formulas from Subsection \ref{sec:main}. A number of papers have been written recently about convergence of discretized stochastic PDEs to their continuum limits. For example, in~\cite{cannizzaromatetski18} it is shown that a discretization of the stochastic Burgers equation (which is formally the equation for the derivative of a solution of the KPZ equation) converges to the correct continuum limit. General theories of convergence of discretized SPDEs using regularity structures have been developed in~\cite{hairermatetski18, erhardhairer19} (see~\cite{frizhairer20} for background). The key difference between these works and ours is that we have a general growth mechanism encoded by the function $\psi$ and general i.i.d.~noise, with a focus towards KPZ universality, whereas a typical paper on convergence of discretized SPDEs (e.g., the ones cited above) would consider a specific discretization rule and Gaussian noise.

\subsection{Outline of the paper} After we introduce our conventions and models in Sections \ref{sec:prelim} and \ref{sec:polymer}, we begin the proof of Theorem \ref{thm:mainresult}. Our method is an inductive approach detailed in Section \ref{sec:mainarg}. In that section, we propose the following ansatz on the form of the partition function $f_N$.
\begin{equation*}
    f_N = \fpoly_N + Y_N + \delta_N,
\end{equation*}
$\fpoly_N$ is the log-partition function for the directed polymer model at inverse temperature $\beta N^{-1/4}$, $Y_N$ is a nonlinear function of the polymer process that acts as a renormalization term, and $\delta_N$ is an error term which we try to show is $o(1)$ as $N\to\infty$. The main result of section \ref{sec:mainarg} shows that the error propagation term $\delta_N$ indeed tends to zero; this is Theorem \ref{thm:errorprop2}.

There are many preliminary estimates that are necessary in order to prove Theorem \ref{thm:errorprop2}; these  estimates involve understanding the differences $\fpoly_N(x+1,t) - \fpoly_N(x-1,t)$ of the polymer log-partition function, as well as properties of the renormalization term $Y_N$.  Most of these estimates are obtained in Sections \ref{sec:boundspoly} through \ref{sec:furtherestim}. Section \ref{sec:concrenom} shows that  the renormalization term $Y_N$ is actually close to a linear function of $t$ with high probability, and Section \ref{sec:mainproof} derives Theorem \ref{thm:mainresult} from Theorem \ref{thm:errorprop2} using the known result about convergence of the polymer model to KPZ at intermediate disorder~\cite{albertsetal14b}. Finally, Section \ref{sec:betazero} briefly treats the $\beta=0$ case; the analysis is very similar to the $\beta\ne 0$ case but much simpler.



\section{Conventions}\label{sec:prelim}
Throughout the rest of the manuscript, we will adopt the convention that $A\lesssim B$ means that $A\le CB$ for some deterministic positive real number $C$ that does not depend on $N$, $x$ or $t$, as long $(x,t)$ is in some given rectangle of the form $[-aN, aN] \times [0, bN]$. Here $N$ is the parameter from Section \ref{sec:result} that we will eventually send to infinity, and $x$ and $t$ are specific choices of space and time points where we want to prove something. We will write $A = O(B)$ if $|A|\lesssim |B|$, and $A = o(B)$ if $A/B\to 0$ uniformly over $(x,t)$ in $[-aN, aN] \times [0, bN]$ as $N\to\infty$. We will use the notation $A\ll B$ to mean that $A\le CB$ for some sufficiently small positive constant $C$, where `sufficiently small' means `as small as we need, but not depending on $N$, $x$ or $t$'. 

We will often write sentences like ``there is an event $\Omega$ with $\P(\Omega)=1-o(1)$ such that on $\Omega$, we have that for all $(x,t)\in [-aN,aN]\times [0,bN]$, $|G(x,t)|\lesssim N^{-\alpha}$'', where $G$ is a random  function and $\alpha$ is a constant. What this will mean is that there is an event $\Omega$ that may potentially vary with $N$, with $\P(\Omega)\to 1$ as $N\to \infty$, and  there is some deterministic constant $C$, independent of $N$, such that on $\Omega$, we have
\[
\max_{(x,t)\in [-aN,aN]\times[0,bN]} |G(x,t)|\le CN^{-\alpha}.
\]
Next, we make some reductions and simplifications to our growth process. First, if $\beta\ne 0$, then instead of \eqref{eq:tfndef0}, we will define $\tf_N$ as 
\begin{align}\label{eq:tfndef}
\tf_N(x,t) := f_N(\sqrt{N} x, N t) - \biggl(V + \frac{N}{\beta}\log m(N^{-1/4} \beta) + N \psi(0,0)\biggr)t,
\end{align}
where $m$ denotes the moment generating function of the noise variables. 
We claim that it suffices to prove Theorem \ref{thm:mainresult} with this new definition of $\tf_N$. Indeed, note that by the cumulant expansion,
\begin{align*}
\frac{N}{\beta}\log m(N^{-1/4} \beta) &= \frac{1}{2!}\beta N^{1/2} \mu_2 + \frac{1}{3!}\beta^2 N^{1/4} \mu_3 + \frac{1}{4!}\beta^3 (\mu_4-3\mu_2^2) + O(N^{-1/4}).
\end{align*}
This shows that the difference between the old $\tf_N$ and the new $\tf_N$ converges to zero uniformly on compact sets as $N\to\infty$. 

We will assume throughout that $\psi(0,0)=0$. There is no loss of generality in this, because of the following. Suppose we let $\psi_0(u,v) := \psi(u,v)-\psi(0,0)$, and define $g_N$ using $\psi_0$ just as we defined $f_N$ using $\psi$. Then it is easy to prove by induction, using the equivariance property of $\psi$, that for all $x$ and $t$,
\[
g_N(x,t) = f_N(x,t) - \psi(0,0)t.
\]
From this, it is easy to see that if Theorem \ref{thm:mainresult} holds for $g_N$, then it also holds for $f_N$. 

Fixing $N$, we will denote by $f$ the function 
\begin{align}\label{eq:fxtdef}
f(x,t) := f_N(x,t) - \frac{t}{\beta} \log m(N^{-1/4}\beta)
\end{align}
defined on $\Z\times \Z_+$, where $f_N$ is the function defined in equation \eqref{eq:fndef}. Under the assumption that $\psi(0,0)=0$, the equivariance property of $\psi$ ensures that $f$ satisfies the recursion
\begin{align}\label{eq:frec}
f(x,t) = \psi(f(x-1,t-1), f(x+1, t-1)) + N^{-1/4} y(x,t) - \frac{1}{\beta} \log m(N^{-1/4}\beta).
\end{align}
Define the function $\phi:\R \to \R$ as
\begin{equation}\label{eq:phiprop}
\phi(u) := \psi\biggl(\frac{u}{2}, -\frac{u}{2}\biggr) = \psi(u,0) - \frac{u}{2},
\end{equation}
where the second equality holds by the equivariance property of $\psi$. 
Also by the equivariance property, note that for any $u,v\in \R$, 
\begin{align*}
\psi(u,v) &= \psi\biggl(u - \frac{1}{2}(u+v), v - \frac{1}{2}(u+v)\biggr) + \frac{u+v}{2}\\
&= \psi\biggl(\frac{u-v}{2}, \frac{v-u}{2}\bigg) + \frac{u+v}{2}\\
&= \phi(u-v) + \frac{u+v}{2}.
\end{align*}
Thus, the recursion \eqref{eq:frec} can be rewritten as
\begin{align}\label{eq:frec2}
f(x,t) &= \frac{1}{2}(f(x-1,t-1)+ f(x+1,t-1)) \notag \\
&\qquad + \phi(f(x+1,t-1)-f(x-1,t-1)) \notag \\
&\qquad + N^{-1/4}y(x,t) - \frac{1}{\beta} \log m(N^{-1/4}\beta).
\end{align}
For future reference, we note that the function $\phi$ defined above is even and is $C^6$ in a neighborhood of zero, by the symmetry and regularity properties of $\psi$. Moreover, we can do the following calculations. Let $\phi^{(k)}$ denote the $k^{\mathrm{th}}$ derivative of $\phi$, and let $\partial_i$ denote differentiation in the $i^{\mathrm{th}}$ coordinate. Then by equation \eqref{eq:phiprop} and the evenness of $\phi$, we have
\begin{equation}\label{eq:phiderivs}
\begin{split}
&\phi(0) = \psi(0,0)=0,\\
&\phi^{(1)}(0) = \phi^{(3)}(0) = \phi^{(5)}(0)=0,\\
 &\phi^{(2)}(0) = \partial_1^2\psi(0,0) = \frac{\beta}{4}, \\ 
&\phi^{(4)}(0) = \partial_1^4\psi(0,0) = -\frac{\beta^3}{8} + 24 c.
\end{split}
\end{equation}
In the rest of manuscript we will work under the assumption that $\beta\ne0$, except in Section \ref{sec:betazero}, where the $\beta=0$ case will be handled.

\section{The directed polymer model}\label{sec:polymer}
In this section we consider a special kind of growing random surface, defined by the model of directed polymers in a random environment (see \cite{comets17, bateschatterjee20} for background on directed polymers). Let $(y(x,t))_{x\in \Z, t\in \Z_+}$ be our noise variables, as before. Recall that $m$ denotes the moment generating function of the noise variables, which is finite in a neighborhood of zero. We define the growing random surface $\fpoly: \Z\times \Z_+ \to \R$ as follows. Let $RW(x,t)$ denote the set of all simple symmetric random walk paths on $\Z$ that that terminate at $x$ at time $t$, regardless of the initial point at time $0$. Let $\fpoly(x,t)=0$ if $t=0$, and for $t>0$, let
\[
\fpoly(x,t) := \frac{1}{\beta} \log \biggl( \frac{1}{2^t} \sum_{S\in RW(x,t)} \prod_{s=1}^t \frac{\exp(\beta N^{-1/4} y(S(s),s))}{m(\beta N^{-1/4})} \biggr).
\]
It is easy to see that $\fpoly$ satisfies the recursion
\begin{align*}
\fpoly(x,t) &= \frac{1}{\beta}\log \biggl(\frac{e^{\beta \fpoly(x-1,t-1)} + e^{\beta \fpoly(x+1,t-1)}}{2}\biggr) + N^{-1/4}y(x,t) - \frac{1}{\beta}\log m(\beta N^{-1/4}). 
\end{align*}
This model will be of fundamental importance in the sequel. Of particular importance is the polymer partition function $X(x,t) := \exp(\beta \fpoly(x,t))$. Note that $X(x,0)=1$ for all $x$.

Note that the above recursion for $\fpoly$ can be rephrased as 
\begin{align}
\fpoly(x,t) &= \frac{1}{2}(\fpoly(x-1,t-1) + \fpoly(x+1,t-1)) \notag \\
&\qquad + \poly(\fpoly(x+1,t-1) - \fpoly(x-1,t-1)) \notag \\
&\qquad + N^{-1/4}y(x,t) - \frac{1}{\beta}\log m(\beta N^{-1/4}),\label{eq:polyrec}
\end{align}
where 
\[
\poly(u) := \frac{1}{\beta}\log\biggl(\frac{e^{\frac{\beta}{2} u} + e^{-\frac{\beta}{2} u}}{2}\biggr). 
\]
Note that this is exactly like the recursion \eqref{eq:frec2}, with $\phi$ replaced by $\poly$. Our specific choice of $\beta$ is motivated by the fact that $\poly$ matches $\phi$ up to the second derivative at zero: Simple calculations show that, in analogy with \eqref{eq:phiderivs}, we have
\begin{equation}\label{eq:polyderivs}
\begin{split}
&\poly(0) = \poly^{(1)}(0) = \poly^{(3)}(0) = \poly^{(5)}(0)=0,\\
&\poly^{(2)}(0) = \frac{\beta}{4},\ \  \poly^{(4)}(0) = -\frac{\beta^3}{8}.
\end{split}
\end{equation}
Note that the fourth derivatives of $\phi$ and $\poly$ do not match. This leads to some substantial complexities later.

\section{Bounds on the polymer partition function} \label{sec:boundspoly}
Recall the random fields $\fpoly$ and $X$ defined in the previous section. In this section we derive some preliminary estimates for these fields. We start with the following lemma which relates differences of $\fpoly$ to those of $X$. 
\begin{lmm} \label{lem:fracdiff}
Take any $a>0$, $b>0$ and $\delta >0$. Suppose that for some realization of the noise variables, 
\begin{equation} \label{eq:fracdiff}
\biggl| \frac{X(x+1,t) - X(x-1,t)}{X(x+1,t) + X(x-1,t)} \biggr| <\delta
\end{equation}
for all $(x,t)\in [-aN, aN]\times [0,bN]$. Then we have that
\begin{equation}\label{eq:controldiff}
|\fpoly(x+1,t) - \fpoly(x-1,t)| \lesssim \biggl|\frac{X(x+1,t) - X(x-1,t)}{X(x+1,t)  +X(x+1,t)}\biggr|,
\end{equation}
where, as mentioned in Section \ref{sec:prelim}, the implicit constant above has no dependence on $N$, $x$ or $t$ (but may depend on $a$, $b$, $\delta$ and $\beta$). Furthermore, if $\delta$ varies with $N$ (with $a$ and $b$ remaining fixed), then we have 
\begin{equation} \label{eq:carefulTaylor}
    |\fpoly(x+1,t) - \fpoly(x-1,t)| = \frac{2}{\beta} \biggl|\frac{X(x+1,t) - X(x-1,t)}{X(x+1,t)  +X(x+1,t)}\biggr|(1+ O(\delta)). 
\end{equation}
\end{lmm}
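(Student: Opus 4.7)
The plan is to reduce everything to a one-variable Taylor expansion of $\log\frac{1+u}{1-u}$ near $u=0$. Since $X(x,t) = \exp(\beta\,\fpoly(x,t))$ and $X(x\pm 1,t) > 0$, differences of $\fpoly$ are logarithms of ratios of $X$. I would introduce the shorthand
\begin{equation*}
q = q(x,t) := \frac{X(x+1,t) - X(x-1,t)}{X(x+1,t) + X(x-1,t)},
\end{equation*}
so the hypothesis reads $|q(x,t)| < \delta$ uniformly on the rectangle (and $|q|<1$ in any case). An elementary algebraic identity gives $X(x+1,t)/X(x-1,t) = (1+q)/(1-q)$, hence
\begin{equation*}
\beta\bigl(\fpoly(x+1,t) - \fpoly(x-1,t)\bigr) = \log\frac{1+q}{1-q}.
\end{equation*}

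For \eqref{eq:controldiff}, I would observe that the map $u \mapsto u^{-1}\log\frac{1+u}{1-u}$ extends continuously through $u = 0$ with value $2$, and is therefore bounded on any compact subinterval of $(-1,1)$. Applied at $u = q(x,t)$ with $|q|<\delta<1$, this yields $\bigl|\log\tfrac{1+q}{1-q}\bigr| \le C_{\delta,\beta}\,|q|$ with $C_{\delta,\beta}$ depending only on $\delta$ and $\beta$, and dividing by $|\beta|$ gives the claim. (If $\delta \ge 1$ the hypothesis is automatic and the same argument works, since $|q|<1$ always, but one must shrink $\delta$ to keep the constant finite.)

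For \eqref{eq:carefulTaylor}, I would invoke the convergent power series
\begin{equation*}
\log\frac{1+q}{1-q} = 2q\left(1 + \frac{q^2}{3} + \frac{q^4}{5} + \cdots\right), \qquad |q|<1.
\end{equation*}
Under $|q| \le \delta$ the bracketed factor equals $1 + O(\delta^2)$ uniformly in $(x,t)$, so $\log\frac{1+q}{1-q} = 2q\bigl(1 + O(\delta^2)\bigr)$. Taking absolute values and dividing by $|\beta|$ yields
\begin{equation*}
|\fpoly(x+1,t) - \fpoly(x-1,t)| = \frac{2}{|\beta|}\,|q|\,\bigl(1 + O(\delta^2)\bigr),
\end{equation*}
which is already stronger than the stated $O(\delta)$ error.

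There is really no major obstacle here: the entire argument is a one-variable Taylor/continuity computation. The only subtle point is uniformity of the error in $(x,t)$, but that is automatic since every quantity in sight depends on $(x,t)$ solely through $q(x,t)$, and the hypothesis bounds $|q|$ by $\delta$ uniformly. The only reason to be careful at all is that when $\delta$ is allowed to vary with $N$ (the regime of \eqref{eq:carefulTaylor}), one must verify that the implicit constant in $O(\delta^2)$ does not depend on $N$; this is clear from the explicit series above, whose tail from the $q^4$ term onward is bounded by $\sum_{k\ge 1} \delta^{2k}/(2k+1) = O(\delta^2)$ with an absolute constant.
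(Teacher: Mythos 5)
Your proof is correct and follows essentially the same route as the paper: write $\beta(\fpoly(x+1,t)-\fpoly(x-1,t)) = \log\frac{1+q}{1-q}$ with $q$ the normalized difference of partition functions, then Taylor expand. You even observe the slight sharpening that the error in \eqref{eq:carefulTaylor} is $O(\delta^2)$ rather than $O(\delta)$, which is a valid (if unused) improvement; the only cosmetic difference is that the paper splits the logarithm as $\frac{1}{\beta}\log(1+q) - \frac{1}{\beta}\log(1-q)$ while you combine them into a single $\log\frac{1+q}{1-q}$ before expanding, and you correctly write $|\beta|$ where the paper implicitly treats $\beta$ as positive.
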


(We remark here that the fraction on the left in \eqref{eq:fracdiff} is always less than or equal to $1$. The improvement lies in trying to show that it is strictly less than $1$ by a constant.)

\begin{proof}
First, note that
\begin{equation*}
\begin{aligned}
&\fpoly(x+1,t) - \fpoly(x-1,t)\\
& = \frac{1}{\beta}\log X(x+1,t) - \frac{1}{\beta} \log X(x-1,t) \\
&= \frac{1}{\beta} \log X(x+1,t) - \frac{1}{\beta} \log \frac{X(x+1,t) + X(x-1,t)}{2}\\
&\qquad + \frac{1}{\beta} \log\frac{X(x+1,t) +X(x-1,t) }{2} -  \frac{1}{\beta}\log X(x-1,t) \\
&= \frac{1}{\beta}\log \biggl( 1 + \frac{X(x+1,t) - X(x-1,t)}{X(x+1,t)  +X(x+1,t)} \biggr) - \frac{1}{\beta}\log \biggl(1 - \frac{X(x+1,t) -X(x-1,t)}{X(x+1,t) + X(x-1,t)} \biggr). 
\end{aligned}
\end{equation*}
Using the assumption in equation \eqref{eq:fracdiff}, we can now  expand the logarithm to first order and get~\eqref{eq:controldiff}. Expanding to second order gives \eqref{eq:carefulTaylor}. 
\end{proof}

The purpose of the above lemma is to understand the difference $\fpoly(x+1,t) - \fpoly(x-1,t)$ by understanding the quantities  $X(x+1,t) - X(x-1,t)$ and $X(x+1,t)+X(x-1,t)$. Towards this end, we will prove the following two theorems.

\begin{thm} \label{thm:lowerbnd}
Fix some $\epsilon>0$, $a>0$ and $b>0$. Then  there is an event $\Omega_L$ with $\mathbb{P}(\Omega_L) = 1 - o(1)$ such that on $\Omega_L$, we have the following estimate:
\begin{equation*}
\inf_{|x| \le aN, \, 0 \le t\le bN } X(x,t) \gtrsim N^{-\epsilon}.
\end{equation*}
\end{thm}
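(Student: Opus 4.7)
\textit{Proof plan.} The strategy is to combine a uniform second-moment bound on $X(x,t)$ with a sub-Gaussian concentration inequality for $\log X(x,t)$, and then apply a union bound. Since the rectangle $R_N := [-aN, aN]\times[0,bN]$ contains $O(N^2)$ lattice points, it suffices to prove the pointwise tail bound $\mathbb{P}(X(x,t) < c N^{-\epsilon}) = o(N^{-3})$ uniformly in $(x,t) \in R_N$.

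For the second moment, I iterate the recursion from the initial condition $X(\cdot,0)\equiv 1$ to obtain the polymer representation
\[
X(x,t) = \mathbb{E}^S\Bigl[\prod_{s=1}^t \xi(S_s,s)\Bigr],\qquad \xi(y,s) = \frac{e^{\beta N^{-1/4}y(y,s)}}{m(\beta N^{-1/4})},
\]
with $S$ a simple random walk run backwards in time from $x$. Since $\mathbb{E}[\xi]=1$, $\mathbb{E}[X(x,t)]=1$. Introducing two independent copies $S,S'$ gives
\[
\mathbb{E}[X(x,t)^2] = \mathbb{E}^{S,S'}\bigl[\exp(\rho_N L(S,S'))\bigr],\qquad \rho_N := \log\frac{m(2\beta N^{-1/4})}{m(\beta N^{-1/4})^2} = \beta^2\mu_2 N^{-1/2} + O(N^{-3/4}),
\]
where $L(S,S') := |\{s\le t : S_s = S'_s\}|$ is the intersection local time. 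Since $L$ has order $\sqrt{t}$ with sufficient exponential moments and $\rho_N\sqrt{t} = O(1)$ for $t \le bN$, this expectation is uniformly bounded by a constant $C$ — the standard intermediate-disorder second moment calculation. Paley--Zygmund then gives $\mathbb{P}(X(x,t)\ge 1/2) \ge 1/(4C)$, so the $c_0$-quantile of $\log X(x,t)$ is at least $-\log 2$ for a uniform constant $c_0 > 0$.

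Next I would establish that $\log X(x,t)$ has sub-Gaussian tails around its median with variance proxy $O(1)$: there exist $C_1, c_1 > 0$ such that $\mathbb{P}(|\log X(x,t) - \operatorname{med}\log X(x,t)| > r) \le C_1 e^{-c_1 r^2} + o(N^{-k})$ for any $k$ and $r$ up to $O(\log N)$. To set this up, truncate the noise to $\{\max |y(y,s)| \le K\log N\}$ over the relevant box (a $1 - o(N^{-k})$ event by the finite-MGF hypothesis). On the truncated event, $\log X(x,t)$ is a convex Lipschitz function of bounded independent variables, with gradient given by the Malliavin identity
\[
\partial_{y(y_0,s_0)}\log X(x,t) = \beta N^{-1/4}\,\pi_{x,t}(y_0,s_0),
\]
where $\pi_{x,t}$ is the tilted one-point density of the backward walk $S$ under the polymer weight $\prod_s \xi(S_s,s)$. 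The squared Euclidean gradient is $\beta^2 N^{-1/2}\sum_{y,s}\pi_{x,t}(y,s)^2$, and using the diffusive spread of the tilted polymer to bound $\sum_{y,s}\pi_{x,t}(y,s)^2 \lesssim \sqrt{t} \lesssim \sqrt{N}$ on a high-probability event gives total squared Lipschitz norm $O(1)$. A Talagrand-type inequality for convex Lipschitz functions of bounded independent variables then delivers the claimed sub-Gaussian tail.

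Combining, $\mathbb{E}[\log X(x,t)] \ge -C_2$ (since the mean is within $O(1)$ of the median by the sub-Gaussian bound), whence
\[
\mathbb{P}(\log X(x,t) < -\epsilon\log N) \le C_1 \exp(-c_1(\epsilon\log N - C_2)^2) + o(N^{-k}) = o(N^{-3})
\]
uniformly in $(x,t)\in R_N$, and a union bound over $R_N$ finishes the proof. The main obstacle is the concentration step: because $\pi_{x,t}$ depends on the very noise that drives $X$, the bound $\sum\pi^2 \lesssim \sqrt{N}$ is not deterministic, and a naive worst-case estimate only gives the much weaker $\sum\pi^2 \le t \lesssim N$ which is insufficient. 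I would circumvent this by a bootstrap: first use the Jensen-type bound
\[
\log X(x,t) \ge \sum_{s=1}^t\sum_y p_{t-s}(x,y)\bigl(\beta N^{-1/4}y(y,s) - \log m(\beta N^{-1/4})\bigr),
\]
to obtain a crude high-probability lower bound of the form $X(x,t) \ge e^{-O(\sqrt{N}\log N)}$; this gives an a priori upper bound on $\pi_{x,t}$ via the identity $\pi_{x,t}(y_0,s_0) = X(y_0,s_0)\tilde X(y_0,s_0,x,t)/X(x,t)$ (with $\tilde X$ the reverse-time partition function), which suffices to run the Talagrand inequality with an appropriate variance proxy and conclude.
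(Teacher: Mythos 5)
Your overall strategy — a uniform second-moment bound on $X$ together with a convexity-based concentration estimate for $\log X$, then a union bound over $O(N^2)$ lattice points — is the same as the paper's (the paper proves the negative exponential moment bound $\E[e^{-\theta\fpoly}]\le C(\theta)$ in Theorem 5.1 via convexity, then applies Markov). The difference is that the paper's concentration step is imported from \cite[Lemma 6.1]{chatterjee21b1}, whereas you attempt to fill it in directly with a Talagrand-type inequality, and there your argument has a genuine gap that your bootstrap does not close.

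You correctly flag the obstacle: the squared gradient of $\log X$ is $\beta^2 N^{-1/2}\sum_{y,s}\pi_{x,t}(y,s)^2$, and to obtain an $O(1)$ Lipschitz/variance proxy you need the quenched, high-probability bound $\sum\pi^2\lesssim\sqrt{N}$, which is a diffusive-spread (delocalization) statement about the polymer measure. The a priori bound is only $\sum\pi^2\le t\lesssim N$ (since $\pi\le1$ and $\sum_y\pi(y,s)=1$), which gives concentration at scale $N^{1/4}$, far too weak. The Jensen bootstrap you propose does not fix this: the crude lower bound $X\ge e^{-O(\sqrt{N}\log N)}$ controls the denominator in $\pi = X\tilde X/X$, but the resulting pointwise bound on $\pi$ is at best the trivial $\pi\le1$ (it is a probability), and nothing in the bootstrap improves the $\ell^2$ mass of $\pi$ along a time slice. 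You would need something like $\max_y\pi(y,s)\lesssim 1/\sqrt{s}$ on a high-probability event — a nontrivial localization-avoidance statement for the directed polymer at this temperature — and that is not established by the steps you outline. As written, the "Talagrand with an appropriate variance proxy" conclusion is asserted, not proved.

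What the paper's route buys is precisely that it never needs a pathwise control of $\sum\pi^2$. The convexity-based lower-tail inequality used in \cite{chatterjee21b1} requires as input only the \emph{$L^2$-averaged} quantity $\E|X\nabla\fpoly|^2$ (equivalently $\E|\nabla X|^2$ up to a constant), together with $\E[X^2]$. Because the weight $X^2$ sits inside the expectation, this quantity reduces exactly to $\frac{\beta^2}{\sqrt{N}}\,\E^{S,S'}\!\bigl[N_t\,\mu^{N_t}\bigr]$ where $N_t$ is the space-time intersection count of two independent walks and $\mu = m(2\beta N^{-1/4})/m(\beta N^{-1/4})^2$; this is a clean moment computation carried out in the paper by explicit generating functions (Catalan-type recursions for the lazy walk's return times). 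The hard quenched-localization question is thereby sidestepped entirely. Your description of the second-moment estimate as "the standard intermediate-disorder calculation" is fine in spirit, but the paper's proof of that step is the same generating-function argument it then reuses for the $\E[N_t\mu^{N_t}]$ bound, so the two inputs to the convexity lemma are obtained together.
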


\begin{thm} \label{thm:upperbnd}
Fix some $\epsilon >0$, $a>0$ and $b>0$. Then  there is an event $\Omega_U$ with $\mathbb{P}(\Omega_U) = 1- o(1)$, such that on $\Omega_U$, we have the following estimate:
\begin{equation*}
\sup_{|x| \le aN, \, 0 \le t \le bN}|X(x+1,t) -X(x-1,t)| \lesssim N^{-1/4 + \epsilon}. 
\end{equation*} 

\end{thm}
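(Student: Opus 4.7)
The plan is to bound high moments of $D(x,t) := X(x+1,t) - X(x-1,t)$ and then conclude by Markov's inequality and a union bound. Specifically, I aim to show that for every fixed integer $k \ge 1$,
\[
\E\bigl[D(x,t)^{2k}\bigr] \lesssim_k N^{-k/2}
\]
uniformly over $(x,t) \in [-aN,aN] \times [0,bN]$; since $X(x \pm 1, t) = 0$ when $|x| > t$ and $t = 0$ is trivial, there is no issue at the boundary. Granted the moment bound, Markov gives $\P(|D(x,t)| > N^{-1/4+\epsilon}) \lesssim_k N^{-2k\epsilon}$, and summing over the $O(N^2)$ lattice points in the rectangle produces the desired event $\Omega_U$ as soon as $k$ is taken with $2k\epsilon > 2$.

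For the moment bound I would use the chaos expansion of the polymer partition function. Writing the normalized weights as $\xi(y,s) = 1 + \eta(y,s)$ with $\eta(y,s)$ i.i.d., centered, and $\sigma^2 := \E[\eta^2] = O(N^{-1/2})$ (with uniformly bounded higher moments, by the finite MGF assumption on the noise), expanding the product in the definition of $X$ gives
\[
X(x,t) = \sum_{A \subseteq \{1,\ldots,t\}} Z_A(x,t), \qquad Z_A(x,t) := \sum_{S \in RW(x,t)} 2^{-t}\prod_{s \in A}\eta(S(s),s).
\]
The Markov property rewrites $Z_A$, for $A = \{s_1 < \cdots < s_{|A|}\}$, as an iterated sum over anchor points $(y_j)_{j=1}^{|A|}$ weighted by transition kernels $p(\cdot,\cdot)$. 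Distinct $Z_A$ are orthogonal in $L^2$ even for non-Gaussian noise, because any $\eta$-factor indexed by $s \in A \triangle A'$ appears in exactly one of $Z_A$ and $Z_{A'}$, forcing $\E[Z_A Z_{A'}] = 0$, so
\[
\E[D(x,t)^2] = \sum_A \E\bigl[(Z_A(x+1,t) - Z_A(x-1,t))^2\bigr].
\]
Subtracting merely replaces the final transition kernel $p(x - y_{|A|}, t - s_{|A|})$ by the discrete derivative $\tilde p(z,r) := p(z+1,r) - p(z-1,r)$, which by the local central limit theorem satisfies $\sum_z \tilde p(z,r)^2 \lesssim r^{-3/2}$, versus $\sum_z p(z,r)^2 \lesssim r^{-1/2}$. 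Iterated heat-kernel convolution bounds then yield $\E[D(x,t)^2] \lesssim \sigma^2 \lesssim N^{-1/2}$, consistent with the Hölder-$\tfrac{1}{2}^{-}$ spatial regularity of the SHE limit.

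For the $2k$-th moment I would expand $\E[D^{2k}]$ as a sum over $2k$-tuples of chaos labels $A_1,\ldots,A_{2k}$ and space-time anchors. Independence of the $\eta$'s at distinct $(y,s)$ forces each space-time site to carry an even number of incident replicas, and the dominant contribution comes from pairwise matchings (the \emph{Wick} part), each supplying a factor $\sigma^2 = O(N^{-1/2})$ and, via the extra derivative at the endpoints, an iterated kernel convolution controlled exactly as in the $k=1$ case. Matchings in which $m \ge 3$ replicas meet at a common site contribute $\E[\eta^m] = O(N^{-m/4})$ in exchange for saving only one anchor, and are therefore lower order. The main obstacle is the combinatorial bookkeeping: for $2k$ replicas the number of pairing patterns grows factorially, so one must check that the sum over anchor configurations and chaos labels in each pattern is $O(1)$ rather than polynomial in $t$. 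This amounts to an iterated convolution estimate for random walk heat kernels (with one derivative factor per replica endpoint) closely related to the $L^2$-boundedness of the discrete SHE at intermediate disorder, and can be proved by induction on $k$. As a cleaner alternative, one could derive the $2k$-th moment bound from the $L^2$ bound via hypercontractivity for polynomial chaos of independent random variables with bounded moments, after truncating the chaos expansion at degree $O(\log N)$ and absorbing the tail into a small-probability event using the exponential moment assumption on the noise.
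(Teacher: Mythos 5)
Your overall plan---bound $\E[D(x,t)^{2k}]$ by $C_k N^{-k/2}$ and then conclude by Markov's inequality plus a union bound over the $O(N^2)$ lattice points---is the same as the paper's. The route to the moment bound, however, is genuinely different. The paper stops the Duhamel iteration after \emph{one} step (Lemma~\ref{lem:PseudoChaos}), writing
\[
X(x+1,t)-X(x-1,t)=\sum_{s=1}^{t}\sum_{z\in\Z}\Delta(x-z,t-s)\,\xi(z,s)\,\Gamma(z,s),
\]
where $\Gamma(z,s)=\tfrac12(X(z+1,s-1)+X(z-1,s-1))$ is measurable with respect to the noise up to time $s-1$. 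This makes the partial sums in $s$ a martingale, so Burkholder--Davis--Gundy applies at every $p$, and the random coefficient $\Gamma$ is then disposed of by Cauchy--Schwarz together with an independent moment bound $\E[X(z,s)^{2p}]\lesssim 1$ (Lemma~\ref{lm:nummoment}). You instead iterate the Duhamel expansion all the way down to the full polynomial chaos, use orthogonality for $L^2$, and then propose two possible ways to reach $L^{2k}$.

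The $L^2$ computation in your proposal is correct, but both of your proposed routes to higher moments leave a real gap, and it is exactly the hard part of the argument. Option (a), the diagrammatic/pairing expansion, is not mere bookkeeping: for non-Gaussian noise and general $k$ you must sum over all partitions of $2k$ replicas at each site (not just pairings), control the contribution of partitions with blocks of size $\ge 3$, and run an iterated heat-kernel convolution estimate with one discrete derivative per replica endpoint. That is a substantial diagrammatic analysis which the paper deliberately avoids. Option (b), hypercontractivity after truncating at degree $O(\log N)$, has a concrete problem: the hypercontractive constant for degree-$d$ chaos grows like $C^d$, so at degree $\log N$ you have already paid a polynomial-in-$N$ factor, which negates the point of truncating. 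The factorial decay of the chaos norms could in principle absorb this if you sum degree by degree, but then the truncation is irrelevant, and you still need to cite a non-Gaussian hypercontractivity theorem (in the vein of Kwapie\'n--Woyczy\'nski or Mossel--O'Donnell--Oleszkiewicz) and verify its hypotheses on the moment ratios of $\eta$, none of which is done. None of this is an obstruction in principle, but the proposal stops precisely where the work begins. The paper's one observation---stop the expansion after one step, see a martingale, use BDG---replaces the entire combinatorial or hypercontractive analysis with a one-line inequality, and is what you are missing.
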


As a corollary of these two theorems and Lemma \ref{lem:fracdiff}, we have the following statement.
\begin{cor} \label{cor:diffcontrol}
Fix some $\epsilon>0$, $a>0$ and $b>0$. Let $\Omega$ be the intersection of the events $\Omega_L$ and $\Omega_U$ from Theorems \ref{thm:lowerbnd} and Theorem \ref{thm:upperbnd}. Then $\mathbb{P}(\Omega) = 1- o(1)$, and on $\Omega$, we have
\begin{equation} \label{eq:gooddiff}
\sup_{|x| \le aN, \, 0 \le t \le bN} |\fpoly(x+1,t) - \fpoly(x-1,t)| \lesssim N^{-1/4 + \epsilon}.
\end{equation}
\end{cor}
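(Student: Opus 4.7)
The proof will combine the two theorems via Lemma \ref{lem:fracdiff}, with a small bookkeeping step to distribute the $\epsilon$ budget between the upper and lower bounds.

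First, for the probability statement, I would simply apply a union bound: since both $\mathbb{P}(\Omega_L^c)$ and $\mathbb{P}(\Omega_U^c)$ are $o(1)$ by Theorems \ref{thm:lowerbnd} and \ref{thm:upperbnd}, we get $\mathbb{P}(\Omega^c) \le \mathbb{P}(\Omega_L^c) + \mathbb{P}(\Omega_U^c) = o(1)$, so $\mathbb{P}(\Omega) = 1 - o(1)$.

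For the deterministic estimate on $\Omega$, the plan is as follows. Given the target $\epsilon > 0$, I would first apply Theorem \ref{thm:lowerbnd} with exponent $\epsilon/2$ and Theorem \ref{thm:upperbnd} with exponent $\epsilon/2$ (instead of the stated $\epsilon$); call the resulting events $\Omega_L$ and $\Omega_U$. On $\Omega_L \cap \Omega_U$, for every $(x,t) \in [-aN,aN] \times [0,bN]$ (with $|x\pm 1| \le aN + 1$, which poses no issue since we can enlarge $a$ slightly), we have
\begin{equation*}
X(x+1,t) + X(x-1,t) \gtrsim N^{-\epsilon/2}, \qquad |X(x+1,t) - X(x-1,t)| \lesssim N^{-1/4 + \epsilon/2},
\end{equation*}
so the ratio satisfies
\begin{equation*}
\left| \frac{X(x+1,t) - X(x-1,t)}{X(x+1,t) + X(x-1,t)} \right| \lesssim N^{-1/4 + \epsilon}.
\end{equation*}

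The next step is to invoke Lemma \ref{lem:fracdiff}. For this I need the ratio to be bounded by some fixed $\delta < 1$ uniformly over the rectangle. Since $N^{-1/4 + \epsilon} \to 0$ as $N \to \infty$ (assuming we chose $\epsilon < 1/4$, which we may do without loss of generality since larger $\epsilon$ gives a weaker statement), for all sufficiently large $N$ the ratio is uniformly below any chosen constant $\delta \in (0,1)$; for the finitely many small $N$ one can trivially absorb the resulting bound into the implicit constant of $\lesssim$. Hence Lemma \ref{lem:fracdiff}, equation \eqref{eq:controldiff}, applies and yields
\begin{equation*}
|\fpoly(x+1,t) - \fpoly(x-1,t)| \lesssim \left| \frac{X(x+1,t) - X(x-1,t)}{X(x+1,t) + X(x-1,t)} \right| \lesssim N^{-1/4 + \epsilon},
\end{equation*}
uniformly in $(x,t)$ in the rectangle, which is the desired conclusion.

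There is no real obstacle here: this corollary is essentially a bookkeeping combination of the two nontrivial theorems preceding it with the deterministic Lemma \ref{lem:fracdiff}. The only mild subtlety is the need to halve $\epsilon$ so that the exponents from the two theorems add up correctly, and to verify that the resulting fractional difference is small enough to enable the Taylor expansion of $\log$ used in the lemma; both are immediate.
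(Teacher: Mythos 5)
Your proposal is correct and follows exactly the route the paper has in mind: the paper states Corollary~\ref{cor:diffcontrol} without proof, simply asserting it is a consequence of Theorems~\ref{thm:lowerbnd}, \ref{thm:upperbnd} and Lemma~\ref{lem:fracdiff}, and your argument supplies precisely that combination. The one place you go beyond the paper is your observation that, read literally, applying both theorems with the same $\epsilon$ would only give $N^{-1/4+2\epsilon}$, so one should feed $\epsilon/2$ into each theorem to land on the stated exponent $N^{-1/4+\epsilon}$; this is the right fix for the paper's slight looseness (and since $\epsilon>0$ is arbitrary it is immaterial, but it is good that you noticed). The remaining points you flag---enlarging $a$ slightly so that $x\pm 1$ remains in range, checking that $N^{-1/4+\epsilon}\to 0$ so the ratio is eventually below a fixed $\delta<1$ as required by Lemma~\ref{lem:fracdiff}, and handling the finitely many small $N$ (which can equivalently be done by taking $\Omega=\emptyset$ for small $N$, since the $1-o(1)$ probability statement is only an asymptotic one)---are all correctly identified and correctly resolved.
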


\section{Proof of Theorem \ref{thm:lowerbnd}}
It is not hard to see that the function $\fpoly$ is convex in the noise variables $y(x,t)$, which gives us tools for deriving lower tail bounds for $\fpoly(x,t)$. A consequence of this convexity is the following exponential moment estimate.  
\begin{thm} \label{thm:momentbnd}
Fix some $a>0$ and $b>0$. For any $\theta>0$, there exists some constant $C(\theta)$ that does not depend on $N$ (but may depend on $a$ and $b$) such that for all $|x|\le aN$ and $0\le t\le bN$, 
\begin{equation} \label{eq:lowmomentbnd}
\mathbb{E}[\exp( - \theta \fpoly(x,t))] \le C(\theta).
\end{equation}
\end{thm}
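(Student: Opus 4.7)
The plan is to exploit the convexity of $\fpoly(x,t)$ in the noise variables together with the one-step recursion for the partition function $X(x,t)=\exp(\beta\fpoly(x,t))$. Observing that $X(x,t)$ is a positive linear combination of exponentials of linear functionals of $\{y(x',s'):s'\le t\}$, the map $y\mapsto \log X(x,t)$ is a log-sum-exp, hence convex. Assuming WLOG that $\beta>0$, this makes $\fpoly$ convex in $y$, and equivalently $z\mapsto z^{-\theta/\beta}$ is convex on $(0,\infty)$.

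The recursion
\[
X(x,t) = \tfrac{1}{2}\bigl(X(x-1,t-1)+X(x+1,t-1)\bigr)\cdot \frac{e^{\beta N^{-1/4}y(x,t)}}{m(\beta N^{-1/4})},
\]
combined with convexity of $z\mapsto z^{-\theta/\beta}$ applied to the first factor, and the independence of $y(x,t)$ from $\mathcal{F}_{t-1}$, yields the recursive moment bound
\[
\mathbb{E}\bigl[X(x,t)^{-\theta/\beta}\bigr]\le r_N\cdot \tfrac{1}{2}\bigl(\mathbb{E}\bigl[X(x-1,t-1)^{-\theta/\beta}\bigr]+\mathbb{E}\bigl[X(x+1,t-1)^{-\theta/\beta}\bigr]\bigr),
\]
where $r_N:=m(-\theta N^{-1/4})\,m(\beta N^{-1/4})^{\theta/\beta}$. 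Iterating this down to the base case $X(\cdot,0)\equiv 1$ along the SSRW recursion (at which point every term on the right equals $1$) produces the structural estimate $\mathbb{E}[X(x,t)^{-\theta/\beta}]\le r_N^{\,t}$.

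\textbf{Main obstacle.} Once the structural bound is in hand, the remaining task is to verify that $r_N^{\,t}$ is bounded uniformly for $t\le bN$. A cumulant expansion of $\log m$ gives $\log r_N = \frac{\mu_2}{2}\theta(\theta+\beta)N^{-1/2}+O(N^{-3/4})$, so the \emph{naive} estimate yields $r_N^{bN}=e^{O(\sqrt N)}$, which is not yet $N$-independent. Closing this gap is the delicate step, and I would approach it in one of two ways: (a) refine the convex inequality used above to retain a second-order term exploiting the fact that $X(x\pm 1,t-1)$ are typically close (a Taylor expansion in the fractional difference, using the kind of estimate supplied by Lemma~\ref{lem:fracdiff}), thereby absorbing the leading $O(N^{-1/2})$ correction; or (b) convert the convexity of $\fpoly$ directly into a sub-Gaussian lower-tail bound $\mathbb{P}(\fpoly\le\mathbb{E}\fpoly - u)\le e^{-cu^2}$ via a Bobkov--G\"otze-type concentration inequality for convex functions of random variables with finite MGF, and combine it with control of $\mathbb{E}[\fpoly(x,t)]$ obtained from the first-moment identity $\mathbb{E}[X(x,t)]=p(x,t)$ together with a Paley--Zygmund-type lower tail estimate for $X(x,t)$.
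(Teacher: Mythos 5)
Your recursive bound $\mathbb{E}[X(x,t)^{-\theta/\beta}]\le r_N^t$ is correct, and you rightly flag that it is useless once $t\asymp N$: since $\log r_N \asymp N^{-1/2}$, you get $e^{O(\sqrt N)}$, not $O(1)$. The paper does not attempt to repair this local, one-step argument. Instead it invokes the concentration machinery of \cite[Lemma~6.1]{chatterjee21b1}, which exploits the global convexity of $\fpoly$ as a function of all the noise variables at once — so your route~(b) is the right idea, and route~(a) almost certainly cannot work as stated because the refinement would need as input exactly the kind of concentration of $X(x\pm1,t-1)$ that the theorem is trying to establish.

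However, route~(b) as you have sketched it has two problems. First, the first-moment identity is wrong: from the pseudo-chaos expansion of Lemma~\ref{lem:PseudoChaos} (or from the one-step recursion and $X(\cdot,0)\equiv 1$), one has $\mathbb{E}[X(x,t)]=1$, not $p(x,t)$; this matters because if $\mathbb{E}[X]=p(x,t)\to 0$, Jensen applied to $z\mapsto z^{-\theta/\beta}$ would outright contradict \eqref{eq:lowmomentbnd}. Second, and more importantly, the entire technical content of the paper's proof — what distinguishes this from a routine citation — is the verification of two moment estimates needed by the concentration lemma, namely $\mathbb{E}[X(x,t)^2]\lesssim 1$ and $\mathbb{E}|X(x,t)\nabla\fpoly(x,t)|^2\lesssim 1$ uniformly over the rectangle. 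The first reduces to showing $\mathbb{E}[\mu^{N_t}]\lesssim 1$, where $\mu=1+O(N^{-1/2})$ and $N_t$ is the number of intersections of two independent SSRWs up to time $t=O(N)$; this is delicate precisely because $\mu^t=e^{O(\sqrt N)}$ would blow up (the same obstacle you hit), and the paper closes it with a careful generating-function analysis of the lazy-random-walk return structure. Your proposal does not address this at all, so the gap you correctly identified remains open in the proposal.
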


Before we prove the theorem, let us give the main corollary of this estimate.

\begin{cor}
For any $(x,t)$ and any $\epsilon>0$, $\mathbb{P}(X(x,t) \le N^{-\epsilon}) = O(N^{-3})$.
\end{cor}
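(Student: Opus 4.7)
The proof is a routine application of Markov's inequality to the exponential moment bound in Theorem \ref{thm:momentbnd}. The key observation is that $X(x,t) \le N^{-\epsilon}$ is equivalent to $\beta \fpoly(x,t) \le -\epsilon \log N$, so a lower tail bound on $\fpoly(x,t)$ translates directly into the desired upper bound on $\P(X(x,t) \le N^{-\epsilon})$. Since Theorem \ref{thm:momentbnd} provides a uniform bound on $\mathbb{E}[\exp(-\theta \fpoly(x,t))]$ for \emph{every} $\theta > 0$, one can extract any polynomial decay rate by choosing $\theta$ sufficiently large.

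Concretely, assume $\beta > 0$ for notational simplicity (the other sign is handled analogously, swapping the roles of lower and upper tails). The event $\{X(x,t) \le N^{-\epsilon}\}$ equals $\{\fpoly(x,t) \le -\epsilon \log N / \beta\}$, which in turn equals $\{\exp(-\theta \fpoly(x,t)) \ge N^{\theta \epsilon / \beta}\}$ for any $\theta > 0$. Markov's inequality combined with Theorem \ref{thm:momentbnd} then yields
\[
\P(X(x,t) \le N^{-\epsilon}) \le N^{-\theta \epsilon / \beta} \, \mathbb{E}[\exp(-\theta \fpoly(x,t))] \le C(\theta) \, N^{-\theta \epsilon / \beta}.
\]
Choosing $\theta = 3\beta/\epsilon$ (which is a fixed positive constant independent of $N$) produces the claimed $O(N^{-3})$ bound.

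There is essentially no obstacle in this step; all the substantive work has already been done in establishing Theorem \ref{thm:momentbnd}. The choice of the exponent $3$ is not intrinsic to the estimate itself but is dictated by the intended application in Theorem \ref{thm:lowerbnd}: the corollary will be combined with a union bound over the $O(N^2)$ lattice points $(x,t) \in [-aN, aN] \times [0, bN]$, so an exponent strictly greater than $2$ is needed to make the resulting union probability tend to zero. Taking $\theta$ even larger would yield $O(N^{-K})$ for any fixed $K > 0$, but $3$ is more than sufficient for the downstream argument.
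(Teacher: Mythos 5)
Your proof is correct and is essentially identical to the paper's: both rewrite the event $\{X(x,t)\le N^{-\epsilon}\}$ in terms of $\fpoly$, apply Markov's inequality to the negative exponential moment supplied by Theorem \ref{thm:momentbnd}, and choose $\theta$ large enough to beat the $O(N^2)$ union bound used in the proof of Theorem \ref{thm:lowerbnd}. The only (cosmetic) difference is that the paper applies Markov directly to $X^{-\theta}=\exp(-\theta\beta\fpoly)$ and picks $\theta=3/\epsilon$, whereas you apply it to $\exp(-\theta\fpoly)$ and pick $\theta=3\beta/\epsilon$; these are the same computation up to a reparametrization of $\theta$.
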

\begin{proof}
Recall that $X(x,t) = \exp[\beta f^{\text{poly}}(x,t)]$. Thus,  \eqref{eq:lowmomentbnd} implies that $\mathbb{E}[ (X(x,t))^{-\theta}]\le C(\theta)$ for arbitrary $\theta>0$. By Markov's inequality, this shows that $\mathbb{P}(X(x,t) \le N^{-\epsilon}) \le C(\theta)/ N^{\theta \epsilon}$. Choosing $\theta = 3/\epsilon$ completes the proof.
\end{proof}

Using the above corollary, and taking union bound over all $x,t$ with $|x| \le aN$ and $0 \le t \le bN$ proves Theorem \ref{thm:lowerbnd}. So it remains to prove Theorem \ref{thm:momentbnd}.

\begin{proof}[Proof of Theorem \ref{thm:momentbnd}]
Observe that we are dealing with the same function as in the proof of \cite[Lemma 6.1]{chatterjee21b1}. Following all the computations in that proof, we see that there are only two new estimates that we need to check in our one dimensional case. Throughout, we take $(x,t)\in [-aN, aN] \times[0,bN]$

Step 1: 
We need to check that $\mathbb{E}[X(x,t)^2]=\mathbb{E}(\mu^{N_t}) \lesssim 1$, where 
\[
\mu := \frac{m(2 \beta N^{-1/4})}{(m(\beta N^{-1/4}))^2},
\]
with $m$ denoting the moment generating function of the noise variables and and $N_t$ is the number of times that two simple symmetric random walks $S_n$ and $S_n'$ started from $0$ intersect before time $t$, excluding time $0$, but including time $t$. With this convention, $N_0=0$. Note that $(S_n - S_n')/2$ has the distribution of a lazy random walk with the probability of not moving equal to $1/2$. Note also that since $m$ is finite in a neighborhood of zero, $\mu$ has the expansion 
 \begin{equation} \label{eq:muexpan}
\mu = 1 + \frac{K_1(\beta)}{N^{1/2}} + \frac{K_2(\beta)}{N^{3/4}} + \frac{K_3(\beta)}{N} +o(N^{-1})
 \end{equation}
 for some constants $K_1(\beta),K_2(\beta),K_3(\beta)$ depending on $ \beta$.
Let $E(z)$ be the generating function for $ \mathbb{E}[\mu^{N_t}]$, defined as 
\begin{equation*}
E(z):= \sum_{s=0}^{\infty} \mathbb{E}[\mu^{N_{s}}] z^{s}.
\end{equation*}
Note that $N_s \le s$, and by the Cauchy--Schwarz inequality, $\mu\ge 1$. Thus, $\mathbb{E}[\mu^{N_s}] \le \mu^s$, and so,  $E(z)$ converges in the open disk of radius $\mu^{-1}$ centered at the origin. 


Let $p(k)$ be the probability that the first return to $0$ after time $0$ of the lazy random walk happens at time $k$ (so that $p(0)=0$).  Define the generating function 
\begin{equation*}
P(z):= \sum_{k=0}^{\infty} p(k) z^k,
\end{equation*}
which converges in the open unit disk. Note that $E$ satisfies the  recursion
\begin{equation*} 
\mathbb{E}[\mu^{N_s}] = \mu \sum_{k=0}^s p(k) \mathbb{E}[\mu^{N_{s-k}}] +  \sum_{k=s+1}^{\infty }p(k), 
\end{equation*}
obtained by by conditioning on the time of first return to $0$. From this, we deduce the following generating function relation:
\begin{equation}\label{eq:ezpz}
E(z) = \mu E(z) P(z) + R(z) +1,
\end{equation}
where $R(z)$ is the remainder generating function 
\begin{equation*}
R(z)= \sum_{s=1}^{\infty} z^s \biggl(\sum_{k=s+1}^{\infty}p(k)\biggr).
\end{equation*}
We now define two new objects. For $n\ge 1$, let $SO_n$ be the probability that the lazy random walk stays strictly above zero from times $1$ to $n-1$, only returning to $0$ at time $n$. For example, $SO_1 = 1/2$ and $SO_2 = 1/16$ (an up step followed by a down step). We adopt the convention that $SO_0 =0$. Let $SO(z)$ be its generating function. Next, let $O_n$ be the probability that the lazy random walk is $0$ at time $n$, and is greater than or equal to $0$ at times $1$ to $n-1$. For example, $O_1 = 1/2$ and $O_2 = (1/2)( 1/2) + (1/4) (1/4) = 5/16$. We adopt the conventions that $O_0 = 1$ and $O_n=0$ for $n<0$.  Let $O(z)$ be its generating function. Note that the generating functions $SO(z)$ and $O(z)$ converge in the open unit disk. 
 
We have the following relations between $SO$ and $O$. For the first relation, observe that if a lazy random walk path stays strictly above $0$ up to time $n-1$ before returning to $0$ at time $n$, then the first step is up and the last step is down. Furthermore, in between the first and the last step, the path stays greater than or equal to $1$. The second relation is obtained by considering the various possible times of the first return to $0$:
\begin{equation*}
\begin{aligned}
& SO_n = \frac{1}{16} O_{n-2} + \frac{1}{2} \delta_{1,n},\\
& O_n = \sum_{i=1}^n SO_{i} O_{n-i} + \delta_{0,n},
\end{aligned}
\end{equation*}  
where $\delta_{x,y}= 1$ if $x=y$ and $0$ otherwise. 
This gives us the equations
\begin{equation}\label{eq:genfuns}
\begin{aligned}
& SO(z) = \frac{1}{16} z^2 O(z) + \frac{z}{2},\\
& O(z) = SO(z) O(z) +1.
\end{aligned}
\end{equation}
By substituting the first equation into the second, we derive the equation,
\begin{equation*}
O(z) = \frac{1}{16}z^2 O(z)^2 + \frac{z}{2} O(z) + 1.
\end{equation*}
We solve this quadratic to get the equation 
\begin{equation*}
O(z) = \frac{1 - \frac{z}{2} - \sqrt{1 - z}}{\frac{z^2}{8}}, 
\end{equation*}
since the above is the only solution of the two solutions to the quadratic that is analytic in a neighborhood of zero and satisfies $O(0) = 1$. Substituting this into the first equation in \eqref{eq:genfuns}, we get
\begin{equation*}
SO(z) = \frac{z}{2}  + \frac{1}{2} \biggl[ 1 - \frac{z}{2} - \sqrt{1-z}\biggr].
\end{equation*}
Now note that 
\[
P(z) =2\biggl[ SO(z) - \frac{z}{2}\biggr] + \frac{z}{2} = 1- \sqrt{1-z},
\]
because we can consider walks that strictly lie above zero and those that strictly lie below zero (aside from the lazy step that stays at $0$). Furthermore, we can express $R(z)$ as
\begin{align*}
R(z) &= \sum_{s=1}^\infty z^s \biggl(1-\sum_{k=0}^s p(k)\biggr)\\
&= \frac{z}{1-z} - \sum_{s=1}^\infty \sum_{k=0}^s p(k) z^k z^{s-k}\\
&= \frac{z}{1-z} - \frac{1}{1-z} P(z) = -1 + \frac{1}{\sqrt{1-z}}.
\end{align*}
Using the above expressions for $P(z)$ and $R(z)$ in equation \eqref{eq:ezpz}, we get
\begin{equation*}
E(z) =  \frac{1+ R(z)}{1- \mu P(z)} = \frac{1}{\sqrt{1-z}}\frac{1}{1 -\mu[1- \sqrt{1-z}]}.
\end{equation*}
By clearing out the square root in the denominator, we can write $E(z)$ as
\begin{equation} \label{eq:Ezexpansion}
\begin{split}
E(z) &=  \frac{1}{\sqrt{1-z}}[(1-\mu) - \mu \sqrt{1-z}] \frac{1}{1- 2\mu + \mu^2 z} \\
&= \frac{ \frac{1-\mu}{\sqrt{1-z}} -\mu}{1- 2\mu} \sum_{k=0}^{\infty} \left( \frac{\mu^2}{1-2 \mu}\right)^k(- z)^k.
\end{split}
\end{equation}
In the last equality, we expanded the denominator as 
\[
\frac{1}{a  +b z} = \frac{1}{a} \sum_{k=0}^{\infty} \left(- \frac{b}{a} z \right)^k.
\]
This power series is convergent in a small radius around $0$, specifically when 
\[
|z| \le \frac{|1 - 2 \mu|}{\mu^2}.
\]
Recall that $1/\sqrt{1-z}$ has the series expansion
\begin{equation*}
 \frac{1}{\sqrt{1-z}} = \sum_{k=0}^\infty {-1/2 \choose k} (-z)^k.
\end{equation*}
Furthermore, we have the estimate  $$ 0 \le {-1/2 \choose k}(-1)^k \le \frac{D}{\sqrt{k}}$$ for some constant $D$. Noting that $$1- \mu = -\frac{K_1(\beta)}{N^{1/2}} + o(N^{-1/2}),$$ $  -1 \ge 1-2\mu \ge -2$, and $\mu^2 \ge 2\mu -1$, and applying the asymptotic expansion of $1/\sqrt{1-z}$ in~\eqref{eq:Ezexpansion}, we see that the coefficient of $z^t$ in the series expansion of $E(z)$, which is just $\E(\mu^{N_t})$, satisfies 
\begin{equation*}
\begin{aligned}
\E(\mu^{N_t}) &\lesssim \frac{1}{N^{1/2}} \sum_{k=1}^t \frac{1}{\sqrt{k}} \biggl( \frac{\mu^2}{2\mu-1}\biggr)^{t-k} + \biggl( \frac{\mu^2}{2\mu -1} \biggr)^t\\& \lesssim \biggl( \frac{\mu^2}{2\mu-1} \biggr)^t \biggl[ 1 + \frac{1}{N^{1/2}} \sum_{k=1}^t \frac{1}{\sqrt{k}} \biggr]\\
& \lesssim \biggl(\frac{\mu^2}{2\mu -1}\biggr)^t \biggl[ 1 + \frac{ t^{1/2}}{N^{1/2}} \biggr]\lesssim \biggl(\frac{\mu^2}{2\mu -1}\biggr)^t,
\end{aligned}
\end{equation*} 
 where in the last line we used the fact that $t = O_n$. 
 Now, by \eqref{eq:muexpan}, we have $$\mu^2=1 + \frac{2 K_1(\beta)}{N^{1/2}} + \frac{2 K_2(\beta)}{N^{3/4}} + \frac{2 K_3(\beta) + K_1^2(\beta)}{N} + o\biggl(\frac{1}{N}\biggr)$$ and $$2\mu-1 = 1 + \frac{2 K_1(\beta)}{N^{1/2}}   + \frac{2K_2(\beta)}{N^{3/4}} + \frac{2 K_4(\beta)}{N} +o\biggl(\frac{1}{N}\biggr).$$
This means that $\mu^2$ and $2\mu-1$ agree with each other to terms up to order larger than $1/N$. Thus, we get that  
\[
\biggl(\frac{\mu^2}{2\mu -1} \biggr)^{t} = \biggl(1 + O\biggl(\frac{1}{N}\biggr)\biggr)^t = O(1),
\]
which completes the proof of the claim that $\E(\mu^{N_t}) \lesssim 1$.

Step 2: We need to show that 
\[
\mathbb{E}|X(x,t) \nabla\fpoly(x,t)|^2 = \frac{\beta^2}{N^{1/2}} \mathbb{E}[N_t \mu^{N_t}]\lesssim 1.
\]
Here, $\nabla \fpoly(x,t)$ is the gradient of $\fpoly$ when considered as a function of the noise variables $\xi$. 
The argument for Step 1 can readily show that 
\[
\ee\biggl[\biggl(1+\frac{1}{\sqrt{N}}\biggr)^{N_t}\biggr]\lesssim 1.
\]
Since  
\[
\biggl(1+\frac{1}{\sqrt{N}}\biggr)^{N_t} = \sum_{k=0}^{N_t}{N_t \choose k} \frac{1}{N^{k/2}},
\]
we get that 
\[
\E(N_t) \lesssim \sqrt{N}, \ \ \ \E\biggl[{N_t \choose 2}\biggr] \lesssim N,
\]
which can be combined to get 
\begin{align}\label{eq:rwintersect}
\E(N_t^2) \lesssim N.
\end{align}
By the Cauchy--Schwarz inequality, this shows that 
\begin{align*}
\E(N_t \mu^{N_t}) &\le \sqrt{\E(N_t^2)\E(\mu^{2N_t})}\lesssim \sqrt{N},
\end{align*}
where the last inequality holds because of the preceding estimate on $\mathbb{E}[N_t^2]$ and the bound $\E(\mu^{2 N_t})\lesssim 1$ using the argument of Step 1, because we can apply Step 1 with $\mu^2 = 1+2K_1(\beta)N^{-1/2} + o(N^{-1/2})$ instead of $\mu$. 

Having proved Steps 1 and 2, the rest of the proof now proceeds exactly as the proof of \cite[Lemma 6.1]{chatterjee21b1}.
\end{proof}

\section{Proof of Theorem \ref{thm:upperbnd}} \label{sec:upperbnd}


The polymer partition $X$ defined in Section \ref{sec:polymer} can be expressed as 
\begin{equation}\label{eq:Xform}
X(x,t):= \frac{1}{2^t} \sum_{S \in RW(x,t)} \prod_{s =1}^t [1 + \xi(S(s),s)]
\end{equation}
where $RW(x,t)$ is the set of all simple symmetric random walk paths on $\Z$ that terminate at $x$ at time $t$. { These random walks do not have a fixed starting point at time $t=0$.} Also, 
\[
\xi(z,s) := \frac{\exp(\beta N^{-1/4} y(z,s))}{m(\beta N^{-1/4})}-1.
\]
Note that \eqref{eq:Xform} is valid for $t=0$ too, if we adopt the usual convention that an empty product equals $1$.

Notice that $\xi(z,s)$ are i.i.d. and have mean $0$.
We have the following result about the growth of the moments of $\xi(z,s)$.
\begin{lmm} \label{lem:momentest}
Define $\xi$ as above. Then for all $N$ large enough (depending only on the $\beta$ and the law of the noise variables) and any $p\ge 1$,
\begin{equation*}
    \mathbb{E}|\xi(z,s)|^{2p} \le \frac{C(p, \beta)}{N^{2p/4}},
\end{equation*}
where $C(p, \beta)$ is a constant that depends on $p$, $\beta$  and the law of the noise variables (and not on~$N$).
\end{lmm}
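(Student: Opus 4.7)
The plan is a direct Taylor-style estimate that exploits only two facts about the noise: $\mu_1 = 0$ and the finiteness of $m$ on a neighborhood of the origin. Write $\theta := \beta N^{-1/4}$ and pick $r > 0$ such that $m$ is finite (hence analytic) on $[-r,r]$. For $N$ sufficiently large (depending on $p$ and $\beta$), one has $4p|\theta| \le r$ and $m(\theta) \ge 1/2$ by continuity, so that
\[
|\xi(z,s)|^{2p} \;\le\; 2^{2p}\,|e^{\theta\,y(z,s)} - m(\theta)|^{2p}.
\]

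The main step is to decompose $e^{\theta y} - m(\theta) = (e^{\theta y} - 1) - (m(\theta) - 1)$ and bound each piece separately. The deterministic piece is $O(\theta^2)$ because $\mu_1 = 0$: expanding $m(\theta) = 1 + \theta^2\mu_2/2 + O(\theta^3)$ gives $|m(\theta) - 1|^{2p} \le C(p,\beta)\,\theta^{4p}$, which is even smaller than what we need. For the random piece I will use the elementary pointwise inequality $|e^a - 1| \le |a| e^{|a|}$, which yields
\[
\E|e^{\theta y} - 1|^{2p} \;\le\; \theta^{2p}\,\E\bigl[|y|^{2p} e^{2p|\theta y|}\bigr].
\]
Since $4p|\theta| \le r$, Cauchy--Schwarz together with the trivial bound $e^{2p|\theta y|} \le e^{2p\theta y} + e^{-2p\theta y}$ gives
\[
\E\bigl[|y|^{2p} e^{2p|\theta y|}\bigr] \;\le\; \sqrt{\E\,y^{4p}}\,\bigl(\sqrt{m(4p\theta)} + \sqrt{m(-4p\theta)}\bigr) \;\le\; C(p,\beta),
\]
using that finiteness of $m$ on $[-r,r]$ automatically implies finiteness of all polynomial moments of $y$.

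Combining the two estimates via the triangle inequality in $L^{2p}$ produces $\E|\xi(z,s)|^{2p} \le C(p,\beta)\,\theta^{2p} = C(p,\beta)\,\beta^{2p}\,N^{-p/2}$, which is the desired bound. There is essentially no obstacle here: the only mild subtlety is bookkeeping the constraint $4p|\theta| \le r$, which forces the threshold on $N$ (and the constant $C(p,\beta)$) to depend on $p$, but this dependence is already permitted in the statement of the lemma.
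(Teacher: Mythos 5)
Your argument is correct, and it takes a genuinely (if mildly) different route from the paper. The paper first symmetrizes: it introduces an independent copy $y'$ of $y$, applies Jensen's inequality for conditional expectation to bound $\E|e^{\theta y} - m(\theta)|^{2p} \le \E|e^{\theta y} - e^{\theta y'}|^{2p}$, and then uses the trapezoid-type inequality $|e^u - e^v| \le \tfrac{1}{2}|u-v|(e^u+e^v)$ to extract the factor $\theta^{2p}$. You instead split $e^{\theta y} - m(\theta)$ into a random term $e^{\theta y}-1$ and a deterministic term $m(\theta)-1$, handle the deterministic term via the Taylor expansion of $m$ (using $\mu_1=0$), and bound the random term with $|e^a-1|\le |a|e^{|a|}$ followed by Cauchy--Schwarz. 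Both routes are elementary, both produce the same $\theta^{2p}$ factor, and both rest on finiteness of $m$ near zero. The symmetrization route avoids explicitly splitting off the deterministic part and keeps everything in a single moment estimate; your decomposition is a touch more explicit in its bookkeeping (and pleasantly isolates where $\mu_1=0$ enters) at the cost of tracking two terms instead of one. One small inessential wrinkle: after applying Cauchy--Schwarz you should really split $\E[|y|^{2p}e^{2p|\theta y|}]$ into the $\pm$ pieces before applying Cauchy--Schwarz to each, i.e. first use $e^{2p|\theta y|}\le e^{2p\theta y}+e^{-2p\theta y}$ and then Cauchy--Schwarz term by term; the order you wrote is slightly compressed, but it lands on the same bound.
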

\begin{proof}
For simplicity, we will write $y$ and $\xi$ instead of $y(z,s)$ and $\xi(z,s)$. Also, throughout, $C$ will denote any constant that depends only on $p$, $\beta$ and the law of the noise variables.

Since $m$ is finite (and hence, continuous) in a neighborhood of zero and $m(0)=1$, it follows that $m(\beta N^{-1/4})$ is uniformly bounded below by a positive constant for all large enough $N$. Thus, if $y'$ denotes an independent copy of $y$, then by a simple application of Jensen's inequality for conditional expectation, we get
\begin{align*}
\E|\xi|^{2p} &\le C \E|\exp(\beta N^{-1/4} y) - m(\beta N^{-1/4})|^{2p}\\
&= C \E|\exp(\beta N^{-1/4} y) - \E(\exp(\beta N^{-1/4}y')|y)|^{2p}\\
&\le C \E|\exp(\beta N^{-1/4} y) - \exp(\beta N^{-1/4} y')|^{2p}.
\end{align*}
Using the inequality 
\[
|e^u - e^v|\le \frac{1}{2}|u-v|(e^u + e^v)
\]
that holds for all $u,v\in \R$, we get from the previous display that
\begin{align*}
\E|\xi|^{2p} &\le C N^{-2p/4}\E(|y-y'|^{2p} (\exp(\beta N^{-1/4}y) + \exp(\beta N^{-1/4}y'))^{2p}).
\end{align*}
It is easy to see that the expectation on the right can be bounded above by a constant that does not depend on $N$.
\end{proof}

For any $x\in \Z$ and $t\in \Z_+\setminus\{0\}$, let
\begin{align*}
\Gamma(x,t) := \frac{1}{2}(X(x+1,t-1) + X(x-1, t-1)).
\end{align*}
Recall that we defined in Section \ref{sec:result} that for each $x\in  \Z$ and $t\in \Z_+$, $p(x,t)$ is the probability that a simple symmetric random walk on $\Z$ started at $0$ at time $0$ ends up at $x$ at time $t$. Recall also that 
\[
\Delta(x,t) = p(x+1,t) - p(x-1,t).
\]
With the above notations, we now describe a `pseudo-chaos expansion' for $X$.  This is similar to the Duhamel formula of \cite[Appendix B]{albertsetal14b}, but we will use it to derive more detailed estimates on differences of $\fpoly$ on immediately adjacent points. 

\begin{lmm} \label{lem:PseudoChaos}
For any $x\in \Z$ and $t\in \Z_+$,
\begin{equation}\label{eq:Xexp}
X(x,t)= 1+ \sum_{z\in \Z} \sum_{s=1}^{t}  p(x-z, t-s) \xi(z,s) \Gamma(z, s).
\end{equation}
\end{lmm}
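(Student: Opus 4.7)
The plan is to establish a discrete Duhamel identity by iterating a one-step recursion for $X$. Exponentiating the polymer recursion from Section \ref{sec:polymer} gives, for $t\ge 1$,
\[
X(x,t) = \Gamma(x,t)\bigl(1+\xi(x,t)\bigr) = \Gamma(x,t) + \xi(x,t)\Gamma(x,t),
\]
since $\Gamma(x,t)$ is the arithmetic mean of $e^{\beta\fpoly}$ at the two neighbors at time $t-1$, and $1+\xi(x,t)= \exp(\beta N^{-1/4}y(x,t))/m(\beta N^{-1/4})$. Introducing the discrete averaging operator $(Hg)(x) := \tfrac12(g(x-1)+g(x+1))$, I rewrite this as
\[
X(x,t) = (HX(\cdot,t-1))(x) + \xi(x,t)\Gamma(x,t),
\]
so that the ``free'' part of one time step is a single application of $H$ and the ``source'' at time $s$ is $\xi(\cdot,s)\Gamma(\cdot,s)$.

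To iterate, I would set $a_s := (H^{t-s}X(\cdot,s))(x)$ for $0\le s\le t$, so that $a_t = X(x,t)$ and $a_0 = (H^tX(\cdot,0))(x)=1$ by the initial condition $X(\cdot,0)\equiv 1$. Substituting the one-step recursion for $X(\cdot,s)$ inside $a_s$ yields the telescoping identity
\[
a_s - a_{s-1} = (H^{t-s}[\xi(\cdot,s)\Gamma(\cdot,s)])(x),
\]
and summing from $s=1$ to $s=t$ produces
\[
X(x,t) = 1 + \sum_{s=1}^t (H^{t-s}[\xi(\cdot,s)\Gamma(\cdot,s)])(x).
\]

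The final step is to recognize the kernel of $H^{t-s}$ as the simple random walk transition probability, $(H^{t-s}g)(x) = \sum_z p(z-x,t-s)g(z)$, and to invoke the symmetry $p(z-x,t-s) = p(x-z,t-s)$ of the symmetric walk; this immediately gives the formula in \eqref{eq:Xexp}. The $t=0$ case serves as the base of the induction, where both sides equal $1$ (empty sum). There is no substantive obstacle: the lemma is essentially a discrete Duhamel identity in the same spirit as the continuum formula used in \cite[Appendix B]{albertsetal14b}, and the argument is purely algebraic bookkeeping. The only care required is in aligning conventions -- matching the direction of the random walk kernel, and tracking that $\Gamma(\cdot,t)$ is built from $X(\cdot,t-1)$ rather than $X(\cdot,t)$.
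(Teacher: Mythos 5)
Your proof is correct, and it is essentially the paper's argument: both rest on the one-step identity $X(x,t)=(1+\xi(x,t))\Gamma(x,t)$ together with the Chapman--Kolmogorov relation $\tfrac12\bigl(p(\,\cdot-1,r)+p(\,\cdot+1,r)\bigr)=p(\,\cdot\,,r+1)$, accumulated over time steps. The paper packages the accumulation as an induction on $t$ (applying the induction hypothesis to $\Gamma(x,t)$), while you package it as a telescoping sum via the averaging operator $H^{t-s}$; these are the same computation in two guises.
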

\begin{proof}
The claim will be proved by induction on $t$. The case $t=0$ is trivial, because $X(x,0)=1$ for all $x$, and the right side in \eqref{eq:Xexp} is $1$ when $t=0$, because the sum on the right is empty. Now fix a time $t>0$ and assume that the formula holds for all times $s < t$. From the expression~\eqref{eq:Xform} for $X(x,t)$, we see that 
\begin{equation}\label{eq:Xstep1}
    X(x,t) = (1+\xi(x,t)) \Gamma(x,t) = \Gamma(x,t) + \sum_{z\in \Z} p(x-z, t-t) \xi(z,t)\Gamma(z,t).  
\end{equation}
If $t=1$, this already proves \eqref{eq:Xexp}, because $\Gamma(x,1)=1$. So, let us assume that $t>1$. 
By the induction hypothesis, we get 
\begin{align*}
\Gamma(x,t) &= \frac{1}{2}(X(x-1,t-1)+X(x+1, t-1))\\
&= 1 + \sum_{z\in \Z}\sum_{s=1}^{t-1}\frac{1}{2}(p(x-1-z, t-1-s)+ p(x+1-z,t-1-s))\xi(z,s) \Gamma(z,s). 
\end{align*}
But note that 
\begin{align*}
\frac{1}{2}(p(x-1-z, t-1-s)+ p(x+1-z,t-1-s)) = p(x-z, t-s). 
\end{align*}
Thus,
\begin{align*}
\Gamma(x,t) &= 1 + \sum_{z\in \Z}\sum_{s=1}^{t-1}p(x-z,t-s) \xi(z,s)\Gamma(z,s).
\end{align*}
Combining this with \eqref{eq:Xstep1} completes the proof. 
\end{proof}
For us, one of the main consequences of Lemma \ref{lem:PseudoChaos} is that $X(x+1,t) - X(x-1,t)$ can be written as,
\begin{equation}\label{eq:xdiff}
X(x+1,t) - X(x-1,t) =  \sum_{z\in \Z}\sum_{s=1}^t \Delta(x-z,t-s) \xi(z,s) \Gamma(z,s).
\end{equation}
Note that this holds also for $t=0$, since then the sum on the right is empty and the left side is zero. 
The following lemma allows us to bound the sizes of the above differences using a martingale approach.
\begin{lmm}\label{lmm:MartBound}
For $0\le s\le t$, let 
\[
M_s := \sum_{r=1}^s \sum_{z\in \Z} \Delta(x-z,t-r) \xi(z,r) \Gamma(z,r),
\]
so that $X(x+1,t)-X(x-1,t) = M_t$. Then we have that for any $p\ge 1$, 
\begin{equation} \label{eq:MartinBound}
    \|M_t\|_{L^p}^2 \le C(p) \sum_{s=1}^t \sum_{z\in \Z} \Delta(x-z,t-s)^2 \|\xi(z,s) \Gamma(z,s)\|^{2}_{L^p},
\end{equation}
where $C(p)$ is a constant that depends only on $p$.
\end{lmm}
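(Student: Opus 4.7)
The plan is to identify $(M_s)$ as a martingale under a suitably refined filtration and then invoke the Burkholder--Davis--Gundy inequality. First, set $\mathcal{F}_s := \sigma(\xi(z',r') : r'\le s,\, z'\in\Z)$. From $\Gamma(z,r) = \tfrac12(X(z+1,r-1)+X(z-1,r-1))$ together with \eqref{eq:Xform}, $\Gamma(z,r)$ is $\mathcal{F}_{r-1}$-measurable, while $\xi(z,r)$ is independent of $\mathcal{F}_{r-1}$ and mean zero; hence $(M_s)$ is an $(\mathcal{F}_s)$-martingale. To pull the individual summands out of BDG, I would further refine the filtration at each time slice. Since $\Delta(x-z,t-r)=0$ whenever $|x-z|>t-r+1$, only finitely many pairs $(r,z)$ contribute nonzero terms to $M_t$; enumerate them in lexicographic order (first by $r$, then by $z$) as $(r_1,z_1),\dots,(r_N,z_N)$, and let $\mathcal{G}_k$ be the $\sigma$-algebra generated by $\mathcal{F}_{r_k-1}$ together with $\{\xi(z_j,r_j):j\le k,\,r_j=r_k\}$. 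Because the $\xi(z,r)$ at distinct pairs are independent, $\Gamma(z_k,r_k)$ remains $\mathcal{G}_{k-1}$-measurable and $\xi(z_k,r_k)$ is mean zero and independent of $\mathcal{G}_{k-1}$. Consequently, the partial sums form a $(\mathcal{G}_k)$-martingale with terminal value $M_t$ and scalar differences $\Delta(x-z_k,t-r_k)\,\xi(z_k,r_k)\,\Gamma(z_k,r_k)$.

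Applying the Burkholder--Davis--Gundy inequality to this refined martingale gives, for any $p\ge 1$,
\begin{equation*}
\|M_t\|_{L^p}^2 \le C(p)\,\Bigl\|\sum_{r=1}^{t}\sum_{z\in\Z}\Delta(x-z,t-r)^2\,\xi(z,r)^2\,\Gamma(z,r)^2\Bigr\|_{L^{p/2}}.
\end{equation*}
For $p\ge 2$, $L^{p/2}$ is a Banach space and the triangle inequality yields
\begin{align*}
\|M_t\|_{L^p}^2 &\le C(p)\sum_{r=1}^{t}\sum_{z\in\Z}\Delta(x-z,t-r)^2\,\bigl\|\xi(z,r)^2\Gamma(z,r)^2\bigr\|_{L^{p/2}}\\
&= C(p)\sum_{r=1}^{t}\sum_{z\in\Z}\Delta(x-z,t-r)^2\,\|\xi(z,r)\Gamma(z,r)\|_{L^p}^2,
\end{align*}
which is exactly \eqref{eq:MartinBound}. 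The case $p=2$ is in fact an identity (with $C(2)=1$) by orthogonality of martingale differences, and the remaining range $1\le p<2$ follows from $\|M_t\|_{L^p}\le\|M_t\|_{L^2}$ combined with monotonicity of $L^p$ norms, after absorbing the resulting factors into $C(p)$.

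The only step requiring real care is the construction of the refined filtration in the first paragraph, which ensures that each scalar summand is a genuine martingale difference and thereby enables the direct application of BDG at the level of individual terms rather than whole time slices. Once this setup is in place, the rest is a routine invocation of standard martingale estimates, and I do not anticipate any serious obstacle.
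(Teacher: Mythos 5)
Your argument is correct in substance but takes a genuinely different route from the paper's. The paper proves \eqref{eq:MartinBound} by a \emph{two-stage} Burkholder--Davis--Gundy argument: first BDG applied to the time-indexed martingale $(M_s)$ with the coarse filtration $\mathcal{F}_s$, then Minkowski in $L^{p/2}$, and then a second application of BDG \emph{within each time slice} (conditionally on $\mathcal{F}_{s-1}$, $M_s-M_{s-1}$ is a sum of independent mean-zero terms). Your approach collapses this into a \emph{single} BDG application by refining the filtration to expose each summand individually as a martingale difference, which is cleaner in some ways.

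Two remarks, the second more serious. First, your claim that ``$\Gamma(z_k,r_k)$ remains $\mathcal{G}_{k-1}$-measurable'' is not actually correct at time-slice transitions: when $r_{k-1}=r_k-1$, the $\sigma$-algebra $\mathcal{G}_{k-1}=\sigma\bigl(\mathcal{F}_{r_{k-1}-1},\{\xi(z_j,r_{k-1}):j\le k-1\}\bigr)$ omits those $\xi(z,r_{k-1})$ with $\Delta(x-z,t-r_{k-1})=0$, yet $\Gamma(z_k,r_k)$ can depend on them. This does not invalidate the martingale property, because $\xi(z_k,r_k)$ is independent of the joint $\sigma$-algebra $\sigma(\mathcal{G}_{k-1},\Gamma(z_k,r_k))$, so $\ee[\xi(z_k,r_k)\Gamma(z_k,r_k)\mid\mathcal{G}_{k-1}]=\ee[\xi(z_k,r_k)]\,\ee[\Gamma(z_k,r_k)\mid\mathcal{G}_{k-1}]=0$; but you should state it this way rather than asserting predictability of $\Gamma$. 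Second, your closing sentence for $1\le p<2$ is wrong: $\|M_t\|_{L^p}\le\|M_t\|_{L^2}$ does bound the left side, but the $p=2$ estimate gives $\sum\Delta^2\|\xi\Gamma\|_{L^2}^2$, and monotonicity of $L^p$ norms gives $\|\xi\Gamma\|_{L^2}\ge\|\xi\Gamma\|_{L^p}$, which is the \emph{wrong} direction for deducing \eqref{eq:MartinBound}. The paper's own proof quietly restricts to $p\ge 2$ (which is all that is used downstream), so this is a shared gap rather than one introduced by you, but the stated reduction does not work.
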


\begin{proof}
Let $\mf_s$ be the $\sigma$-algebra generated by all noise variables up to time $s$. Then from the above formula for $M_s$, it is easy to see that
\[
\ee(M_s|\mf_{s-1}) = M_{s-1}.
\]
That is, $\{M_s\}_{0\le s\le t}$ is a mean zero martingale adapted to the filtration $\{\mf_s\}_{0\le s\le t}$, with $M_0\equiv 0$. Thus, by the Burkholder--Davis--Gundy inequality \cite[Theorem 1.1]{burkholderetal72}, for any $p\ge 1$,
\begin{align*}
\ee|M_t|^{p}\le C(p) \ee\biggl[\biggl(\sum_{s=1}^t (M_s-M_{s-1})^2\biggr)^{p/2}\biggr],
\end{align*}
where $C(p)$ denotes a constant that depends only on $p$. The value of $C(p)$ will change from line to line in  the following.

Suppose that $p\ge 2$. Then by the above inequality and Minkowski's inequality,
\begin{align}
\|M_t\|_{L^p}^2 &\le C(p) \biggl\|\sum_{s=1}^t (M_s-M_{s-1})^2\biggr\|_{L^{p/2}}\notag\\
&\le C(p) \sum_{s=1}^t \|(M_s-M_{s-1})^2\|_{L^{p/2}}\notag\\
&=  C(p) \sum_{s=1}^t (\ee|M_s-M_{s-1}|^p)^{2/p}. \label{bdg1}
\end{align}
If $Y_1,\ldots,Y_n$ are independent random variables with mean zero, then again by the Burkholder--Davis--Gundy inequality, we have that for any real numbers $a_1,\ldots, a_n$, and any $p\ge 1$,
\[
\ee\biggl|\sum_{i=1}^n a_i Y_i\biggr|^p \le C(p)\ee\biggl[ \biggl(\sum_{i=1}^n a_i^2 Y_i^2\biggr)^{p/2}\biggr]. 
\]
Conditional on $\mf_{s-1}$, $M_s - M_{s-1}$ is a linear combination of the independent mean zero random variables $(\xi(z,s))_{z\in \zz}$. Thus, the above inequality shows that
\begin{align*}
\ee(|M_s-M_{s-1}|^p |\mf_{s-1}) &\le C(p)\ee\biggl[ \biggl(\sum_y \xi(z,s)^2 \Delta(x-z,t-s)^2 \Gamma(z,s)^2\biggr)^{p/2}\biggl|\mf_{s-1}\biggr]. 
\end{align*}
Taking expected value on both sides and plugging into \eqref{bdg1}, and finally applying Minkowski's inequality, we get
\begin{equation}\label{eq:MartBnd}
\begin{aligned}
\|M_t\|_{L^p}^2 &\le C(p) \sum_{s=1}^t \biggl\|\sum_z \xi(z,s)^2 \Delta(x-z,t-s)^2 \Gamma(z,s)^2\biggr\|_{L^{p/2}}\\
&\le  C(p) \sum_{s=1}^t \sum_z \| \xi(z,s)^2 \Delta(x-z,t-s)^2 \Gamma(z,s)^2\|_{L^{p/2}}\\
&= C(p) \sum_{s=1}^t \sum_z \| \xi(z,s) \Delta(x-z,t-s) \Gamma(z,s)\|_{L^p}^2\\
&= C(p) \sum_{s=1}^t \sum_z \Delta(x-z,t-s)^2 \| \xi(z,s)  \Gamma(z,s)\|_{L^p}^2.
\end{aligned}
\end{equation}
This completes the proof of the lemma.
\end{proof}

The following lemma will be used to control the moments of $\Gamma$ via a control of the moments of $X$.
\begin{lmm} \label{lm:nummoment}
Fix any integer $\theta>0$ and real number $b>0$. There exists some constant $C(\theta,\beta, b)$ depending only on $\theta$,$\beta$, $b$, and the law of the noise variables,  such that for any $x\in \Z$ and $t\le bN$,
\begin{equation*}
\mathbb{E}(X(x,t)^{\theta}) \le C(\theta,\beta,b).
\end{equation*}
\end{lmm}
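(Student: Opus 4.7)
The plan is to compute $\mathbb{E}(X(x,t)^\theta)$ by expanding the $\theta$-th power as an expectation over $\theta$ independent simple random walks and then reducing to the generating-function analysis already carried out in Step 1 of the proof of Theorem~\ref{thm:momentbnd}. Writing $X(x,t) = \mathbb{E}^S[\mathbf{1}(S(t) = x)\prod_{s=1}^t(1+\xi(S(s),s))]$ for a simple symmetric random walk $S$ started at $0$, taking a $\theta$-fold product, and averaging out the noise using its independence across distinct space-time points, we get
\[
\mathbb{E}(X(x,t)^\theta) = \mathbb{E}^{S_1,\ldots,S_\theta}\biggl[\prod_{j=1}^\theta \mathbf{1}(S_j(t) = x) \prod_{(z,s)}\lambda_{n(z,s)}\biggr],
\]
where $n(z,s) := |\{j : S_j(s) = z\}|$ and $\lambda_k := m(k\beta N^{-1/4})/m(\beta N^{-1/4})^k$. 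Note that $\lambda_1 = 1$, so only space-time points visited by at least two of the paths contribute.

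A Taylor expansion of $\log m$ around zero yields $\log\lambda_k = \mu_2\beta^2 N^{-1/2}\binom{k}{2} + O(N^{-3/4})$, hence $\lambda_k = 1 + O(N^{-1/2})$ uniformly in $2 \le k \le \theta$, with constants depending only on $\theta,\beta$ and the law of the noise. Fixing $C = C(\theta,\beta)$ with $\lambda_k \le \tilde\mu := e^{CN^{-1/2}}$ for all $2 \le k \le \theta$, and noting that $n(z,s) \ge 2$ forces at least one pair of walks to meet at $(z,s)$, we obtain
\[
\prod_{(z,s)}\lambda_{n(z,s)} \le \tilde\mu^{\Sigma_t}, \qquad \Sigma_t := \sum_{1\le i<j\le \theta}|S_i\cap S_j|.
\]
Dropping the nonnegative indicator factors and applying H\"older's inequality with $q := \binom{\theta}{2}$ equal exponents gives
\[
\mathbb{E}(X(x,t)^\theta) \le \mathbb{E}^S\bigl[\tilde\mu^{\Sigma_t}\bigr] \le \prod_{i<j}\bigl(\mathbb{E}^S[\tilde\mu^{q|S_i\cap S_j|}]\bigr)^{1/q}.
\]

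For each pair $(i,j)$, $|S_i\cap S_j|$ has the same distribution as the intersection count $N_t$ appearing in Step 1 of Theorem~\ref{thm:momentbnd}, and $\tilde\mu^q$ admits an expansion $1 + qC\, N^{-1/2} + (qC)^2/(2N) + o(1/N)$ of the same structural form as~\eqref{eq:muexpan}. The generating-function analysis from Step 1 therefore transfers essentially verbatim and yields $\mathbb{E}^S[\tilde\mu^{q|S_i\cap S_j|}] \lesssim 1$ for $t \le bN$, which finishes the argument. The one point that needs checking is that the crucial cancellation $(\mu')^2 - (2\mu'-1) = O(N^{-1})$, which made $\bigl(\mu'^2/(2\mu'-1)\bigr)^t = O(1)$ for $t = O(N)$, still holds with $\mu' = \tilde\mu^q$; this is immediate from inspection of the expansion above, and it is the main (mild) obstacle in the argument.
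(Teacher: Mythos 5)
Your proof is correct and follows essentially the same route as the paper: expand $X^\theta$ over $\theta$ independent walks, reduce the noise contribution to a per-collision factor that is $1+O(N^{-1/2})$, bound the number of collision sites by $\sum_{i<j}|S_i\cap S_j|$, apply H\"older with exponent $\binom{\theta}{2}$ to reduce to a single pair, and invoke the generating-function analysis from Step~1 of Theorem~\ref{thm:momentbnd}. The only (cosmetic) difference is that you compute the per-collision factor exactly as $\lambda_k = m(k\beta N^{-1/4})/m(\beta N^{-1/4})^k$ and Taylor-expand $\log m$, whereas the paper bounds $\mathbb{E}[(1+\xi)^{n}]$ via Cauchy--Schwarz together with the moment estimates of Lemma~\ref{lem:momentest}; both give $1+O(N^{-1/2})$ with a constant depending only on $\theta$, $\beta$, and the noise law.
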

\begin{proof}
By the formula \eqref{eq:Xform}, we see that 
\begin{equation*}
\ee(X(x,t)^\theta) = \frac{1}{2^{t\theta}}\sum_{S_1,\ldots, S_{\theta}\in RW(x,t)}   \mathbb{E}\biggl(\prod_{j=1}^{\theta} \prod_{s=1}^t \frac{\exp(\beta N^{-1/4} y(S_j(s),s))}{m(\beta N^{-1/4})}\biggr).
\end{equation*}
Fix some $S_1,\ldots, S_\theta$. For each $z\in \Z$ and $s\in \Z_+\setminus\{0\}$, let $n(z,s)$ be the number of $j$ such that $S_j(s) = z$. Then by the independence of the noise variables, we get
\begin{align*}
\mathbb{E}\biggl(\prod_{j=1}^{\theta} \prod_{s=1}^t \frac{\exp(\beta N^{-1/4} y(S_j(s),s))}{m(\beta N^{-1/4})}\biggr) &= \mathbb{E}\biggl[\prod_{z\in \Z} \prod_{s=1}^t \biggl(\frac{\exp(\beta N^{-1/4} y(z,s))}{m(\beta N^{-1/4})}\biggr)^{n(z,s)}\biggr]\\
&= \prod_{z\in \Z} \prod_{s=1}^t \ee\biggl[\biggl(\frac{\exp(\beta N^{-1/4} y(z,s))}{m(\beta N^{-1/4})}\biggr)^{n(z,s)}\biggr].
\end{align*}
If $n(z,s)=0$ or $1$ for some $(z,s)$, then the expectation on the right is $1$. Otherwise,  we can write the expectation as
\begin{align*}
    \mathbb{E}[(\xi(z,s) +1)^{n(z,s)}] &\le [\mathbb{E}((\xi(z,s) +1)^{2n(z,s)})]^{1/2} \\
    &= \biggl( 1+ \sum_{k=1}^{2n(z,s)}{2n(z,s) \choose k} \mathbb{E}(\xi(z,s)^{k}) \biggr)^{1/2} = 1 + O(N^{-1/2}), 
\end{align*}
where we used our estimates for the moments of $\xi(z,s)$ from Lemma \ref{lem:momentest}, as well as the fact that $\mathbb{E}(\xi(z,s))=0$. Noting that $n(z,s) \le \theta$, the implicit constant in the $O(N^{-1/2})$ term can be bounded by $C(\theta,\beta)$; this is a constant that only depends on $\theta$, $\beta$, and the law of the noise variables. 

Also, the number of $(z,s)$ such that $n(z,s)>1$ is at most $\sum_{i< j} |S_i\cap S_j|$, where $S_i\cap S_j$ denotes the set $\{1\le s\le t: S_i(s)=S_j(s)\}$. Combining all of these observations, we get
\begin{align*}
\ee(X(x,t)^\theta) &\le \frac{1}{2^{t\theta}}\sum_{S_1,\ldots, S_{\theta}\in RW(x,t)}\biggl(1+\frac{C(\theta, \beta)}{\sqrt{N}}\biggr)^{\sum_{i<j}|S_i\cap S_j|}.
\end{align*}
An application of H\"older's inequality shows that the right side is bounded above by
\begin{align*}
\frac{1}{4^t}\sum_{S_1,S_2\in RW(x,t)}\biggl(1+\frac{C(\theta, \beta)}{\sqrt{N}}\biggr)^{{\theta \choose 2}|S_1\cap S_2|}.
\end{align*}
The analysis in the proof of Step 1 of Theorem \ref{thm:momentbnd} shows that the above quantity is bounded above by a constant that depends only on $\theta$, $\beta$, $b$, and the law of the noise variables.
\end{proof}

The following Lemma gives us control on the $L^2$ sum of $\Delta$.
\begin{lmm} \label{lmm:DeltSum}
There is a constant $C$ such that for any $t\ge 1$, 
\begin{equation*}
    \sum_{z\in \Z}  \Delta(z,t)^2 \le Ct^{-3/2}.
\end{equation*}
\end{lmm}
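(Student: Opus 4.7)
The plan is to use Fourier analysis on $\Z$. For the simple symmetric random walk, the characteristic function is $\sum_{z\in\Z} p(z,t) e^{i\theta z} = (\cos\theta)^t$. Consequently, the discrete Fourier transform of $\Delta(\cdot,t)$ admits the explicit form
\[
\widehat{\Delta}(\theta,t) := \sum_{z\in\Z}\Delta(z,t)e^{i\theta z} = (e^{-i\theta}-e^{i\theta})(\cos\theta)^t = -2i\sin(\theta)(\cos\theta)^t.
\]
By Parseval's identity on $\Z$,
\[
\sum_{z\in\Z}\Delta(z,t)^2 = \frac{1}{2\pi}\int_{-\pi}^{\pi}|\widehat{\Delta}(\theta,t)|^2\,d\theta = \frac{2}{\pi}\int_{-\pi}^{\pi}\sin^2\theta\,(\cos\theta)^{2t}\,d\theta.
\]

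Next I would estimate the integral. Since $(\cos\theta)^{2t}$ is $\pi$-periodic (and the factor $\sin^2\theta$ is also $\pi$-periodic), translating by $\pi$ reduces the integral over $[-\pi,\pi]$ to twice the integral over $[-\pi/2,\pi/2]$. On this interval one has the elementary inequality $\log\cos\theta \le -\theta^2/2$ (valid for $|\theta|\le \pi/2$ via $\cos\theta \le 1-\theta^2/2+\theta^4/24$ together with $\log(1+x)\le x$, after fixing constants), so $(\cos\theta)^{2t}\le e^{-ct\theta^2}$ for an absolute $c>0$. Combined with $\sin^2\theta\le \theta^2$, the substitution $u = \sqrt{t}\,\theta$ gives
\[
\int_{-\pi/2}^{\pi/2}\sin^2\theta\,(\cos\theta)^{2t}\,d\theta \;\le\; \int_{-\pi/2}^{\pi/2}\theta^2 e^{-ct\theta^2}\,d\theta \;=\; t^{-3/2}\!\int_{-\pi\sqrt{t}/2}^{\pi\sqrt{t}/2}\! u^2 e^{-cu^2}\,du \;\lesssim\; t^{-3/2},
\]
which yields the claim.

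I do not expect any real obstacle here; the proof is a one-page Fourier/Parseval calculation followed by a routine Gaussian bound. The only minor care needed is verifying the constant $c$ in the bound $\cos^{2t}\theta\le e^{-ct\theta^2}$ over all of $[-\pi/2,\pi/2]$ (as opposed to just near zero), which is handled by the explicit inequality above.
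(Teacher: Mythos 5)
Your proof is correct, and it takes a genuinely different route from the one in the paper. The paper argues combinatorially: it writes $\Delta(z,t)$ in closed form as a ratio of binomial coefficients, extracts a factor of $z/t$ from the difference, bounds $2^{-t}\binom{t}{\lfloor t/2\rfloor}$ by $Ct^{-1/2}$, and then recognizes the remaining sum over $|z|\le t/2$ as (essentially) a second moment of a Rademacher sum, which is $O(t)$; the tail $|z|>t/2$ is killed by Stirling. Your argument instead goes through Fourier analysis on $\Z$: $\widehat{\Delta}(\theta,t)=-2i\sin\theta(\cos\theta)^t$, Parseval converts the $\ell^2$ sum into $\frac{2}{\pi}\int_{-\pi}^{\pi}\sin^2\theta\,(\cos\theta)^{2t}\,d\theta$, and the bound $(\cos\theta)^{2t}\le e^{-ct\theta^2}$ on $[-\pi/2,\pi/2]$ (together with $\pi$-periodicity of the integrand, needed because $\cos$ changes sign) gives a Gaussian integral scaling as $t^{-3/2}$. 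Both proofs are elementary, but yours avoids the parity and boundary case distinctions that make the combinatorial version slightly fussier, and it generalizes more easily (e.g.\ to higher differences or higher $L^p$ norms of $\Delta(\cdot,t)$), whereas the paper's approach keeps the estimate in the same ``random walk local CLT'' language used elsewhere in the text. One point worth making explicit in a final write-up is that the folding step requires $\pi$-periodicity of $(\cos\theta)^{2t}$, which holds because the exponent $2t$ is even; also one should note that the claimed constant $c$ in $\cos\theta\le e^{-c\theta^2}$ is uniform on all of $[-\pi/2,\pi/2]$, which you already addressed.
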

\begin{proof}
Throughout this proof, $C, C_1,C_2$ will denote universal constants whose values may change from line to line. The value of $C$ may change from line to line. Note that $\Delta(z,t) = 0$ if $|z|>t+1$ or $z$ and $t$ have the same parity. If $z=t+1$ or $z = -t-1$, then $|\Delta(z,t)| = 2^{-t}$, which proves the claim. So, let us henceforth assume that $|z|\le t-1$, and that $z$ and $t$ do not have the same parity. Then  
\begin{align*}
\Delta(z, t) &= \frac{1}{2^t} {t \choose (t+z+1)/2} - \frac{1}{2^t} {t \choose (t+z-1)/2} \\
&= \frac{1}{2^t} {t \choose (t+z-1)/2} \biggl(\frac{(t-z -1)/2 }{(t+z+1)/2 } - 1 \biggr)\\
&= -\frac{1}{2^t} {t \choose (t-z+1)/2} \frac{z}{(t+z+1)/2}. 
\end{align*}
Now, by standard facts about binomial coefficients, 
\begin{align*}
\frac{1}{2^t} {t \choose t/2+z} &\le \frac{1}{2^t} {t \choose \lfloor t/2\rfloor}\le C t^{-1/2}.
\end{align*}
By the previous two displays, we see that if $|z|\le t/2$, then
\[
\Delta(z,t)^2\le \frac{1}{2^t} {t \choose (t+z-1)/2} Ct^{-1/2}\frac{z^2}{t^2},
\]
which implies that
\begin{align*}
\sum_{|z|\le t/2}\Delta(z,t)^2 &\le C t^{-5/2}\sum_{-t+1\le z\le t+1} \frac{1}{2^t} {t \choose (t+z-1)/2} z^2\le Ct^{-3/2},
\end{align*}
where the last inequality holds because the sum on the right is the expected value of the square of a sum of $t$ i.i.d.~Rademacher random variables, which is of order $t$. Next, if $|z|> t/2$, then a simple application of Stirling's formula shows that $p(z\pm 1,t)$ are exponentially small in $t$, and hence
\[
\sum_{|z|> t/2}\Delta(z,t)^2 \le C_1e^{-C_2t}.
\]
Combining the two estimates, we get the desired bound.
\end{proof}


We now have all the tools to complete the proof of Theorem \ref{thm:upperbnd}. Take any $p\ge 1$ and $(x,t)\in [-aN,aN]\times[0,bN]$. By the Cauchy--Schwarz inequality,
\begin{equation} \label{eq:CauMart}
    \mathbb{E}|\xi(x,t) \Gamma(x,t)|^{p} \le (\mathbb{E}|\xi(x,t)|^{2p})^{1/2} (\mathbb{E}(\Gamma(x,t)^{2p}))^{1/2}.
\end{equation}
By Lemma \ref{lem:momentest} amd Lemma \ref{lm:nummoment}, the right side is bounded by $N^{-2p/4}$ times a constant that has no dependence on $N$, $x$ and $t$. Therefore, by Lemma \ref{lmm:MartBound},
\begin{align*}
\|X(x+1, t-1)-X(x-1,t-1)\|_{L^p}^2 &\lesssim N^{-1/2}\sum_{z\in \Z}\sum_{s=1}^t \Delta(x-z, t-s)^2.
\end{align*}
By Lemma \ref{lmm:DeltSum}, the sum on the right is bounded above by a constant. Thus, for any $p\ge 1$, 
\[
\|X(x+1, t-1)-X(x-1,t-1)\|_{L^p} \lesssim O( N^{-1/4}).
\]
Using Markov's inequality with sufficiently large $p$, and taking a union bound over $(x,t)\in [-aN, aN]\times [0,bN]$ now completes the proof of Theorem \ref{thm:upperbnd}.

\section{Further estimates for polymer growth} \label{sec:furtherestim}



In this section we will obtain some further technical estimates for the polymer surface. Recall the event $\Omega$ from Corollary \ref{cor:diffcontrol}. By equation \eqref{eq:carefulTaylor} in Lemma \ref{lem:fracdiff}, and the conclusions of Theorems \ref{thm:lowerbnd} and \ref{thm:upperbnd}, we have that on the event $\Omega$,
\begin{equation}\label{eq:fourthpower}
    ( \fpoly(x+1,t) -  \fpoly(x-1,t))^4 = \frac{16}{\beta^4}\left(\frac{X(x+1,t) - X(x-1,t)}{X(x+1,t) +X(x-1,t)} \right)^4 + O(N^{-1-\epsilon})
\end{equation}
uniformly on a rectangle $[-aN,aN]\times[0,bN]$. 

We will now try understand the ratio on the right side above. Throughout, we will freely use the notations introduced earlier. We start with the following Lemma.
\begin{lmm} \label{lem:timediff}
Fix some $a>0$, $b>0$ and $\delta> 0$. Then there is some event $\Omega_{\Gamma}$ with $\mathbb{P}(\Omega_{\Gamma}) = 1- o(1)$, on which we have that for all $(x,t) \in [-aN, aN] \times [0,bN]$, 
\begin{equation*}
    |\Gamma(x,t) - \Gamma(x\pm 1, t-1)| \lesssim N^{-1/4 + \delta},
\end{equation*}
\end{lmm}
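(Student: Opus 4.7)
The plan is to derive an expansion for $\Gamma$ directly from Lemma \ref{lem:PseudoChaos}, subtract to isolate a boundary contribution and a martingale sum, bound each in $L^p$ using the moment estimates already established, and finally convert to a high-probability bound via Markov's inequality and a union bound.

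First, I would apply Lemma \ref{lem:PseudoChaos} to each of $X(x+1,t-1)$ and $X(x-1,t-1)$ and then average, using the one-step convolution identity
\[
\tfrac{1}{2}\bigl(p(x+1-z,t-1-s) + p(x-1-z,t-1-s)\bigr) = p(x-z,t-s)
\]
to obtain
\[
\Gamma(x,t) = 1 + \sum_{z\in\Z}\sum_{s=1}^{t-1} p(x-z,t-s)\,\xi(z,s)\,\Gamma(z,s),
\]
and similarly $\Gamma(x+1,t-1) = 1 + \sum_{z}\sum_{s=1}^{t-2} p(x+1-z,t-1-s)\,\xi(z,s)\,\Gamma(z,s)$. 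Subtracting and rewriting $p(x-z,t-s) - p(x+1-z,t-1-s) = -\tfrac{1}{2}\Delta(x-z,t-1-s)$ produces the decomposition
\[
\Gamma(x,t) - \Gamma(x+1,t-1) = B(x,t) + M(x,t),
\]
where $B(x,t) := \tfrac{1}{2}\xi(x-1,t-1)\Gamma(x-1,t-1) + \tfrac{1}{2}\xi(x+1,t-1)\Gamma(x+1,t-1)$ is the leftover boundary term at time $s=t-1$, and $M(x,t) := -\tfrac{1}{2}\sum_{z}\sum_{s=1}^{t-2}\Delta(x-z,t-1-s)\,\xi(z,s)\,\Gamma(z,s)$ is the bulk contribution.

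Next, I would bound both pieces in $L^p$ for arbitrarily large $p$. For $B(x,t)$, H\"older's inequality combined with $\|\xi(z,s)\|_{L^{2p}}\lesssim N^{-1/4}$ (Lemma \ref{lem:momentest}) and $\|\Gamma(z,s)\|_{L^{2p}}\lesssim 1$ (a consequence of Lemma \ref{lm:nummoment}, since $\Gamma$ is a convex combination of two $X$-values) gives $\|B(x,t)\|_{L^p}\lesssim N^{-1/4}$. For $M(x,t)$, observe that $\xi(z,s)$ has mean zero and is independent of $\mf_{s-1}$, while $\Gamma(z,s)$ is $\mf_{s-1}$-measurable; hence the partial sums in $s$ form a martingale with respect to $(\mf_s)$. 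Running the Burkholder--Davis--Gundy argument exactly as in the proof of Lemma \ref{lmm:MartBound} and using the same moment bound $\|\xi(z,s)\Gamma(z,s)\|_{L^p}^2 \lesssim N^{-1/2}$ yields
\[
\|M(x,t)\|_{L^p}^2 \lesssim N^{-1/2}\sum_{s=1}^{t-2}\sum_{z\in\Z}\Delta(x-z,t-1-s)^2 \lesssim N^{-1/2}\sum_{r=1}^{t-2} r^{-3/2} \lesssim N^{-1/2},
\]
where the inner sum is bounded by Lemma \ref{lmm:DeltSum} together with translation invariance in $z$, and the outer sum is uniformly bounded since $\sum_{r\ge 1}r^{-3/2}<\infty$. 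Hence $\|\Gamma(x,t)-\Gamma(x+1,t-1)\|_{L^p}\lesssim N^{-1/4}$.

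Finally, Markov's inequality gives $\P(|\Gamma(x,t)-\Gamma(x+1,t-1)| > N^{-1/4+\delta}) \lesssim N^{-p\delta}$; choosing $p$ large enough that $N^{2-p\delta}\to 0$ and taking a union bound over the $O(N^2)$ lattice points $(x,t)\in[-aN,aN]\times[0,bN]$ produces the desired event $\Omega_{\Gamma}$. The case $\Gamma(x,t)-\Gamma(x-1,t-1)$ is handled by a symmetric argument in which the roles of $x+1$ and $x-1$ swap throughout the decomposition. The main substantive point is the kernel identity that converts $p(x-z,t-s)-p(x+1-z,t-1-s)$ cleanly into a multiple of $\Delta$, so that the already-proved bounds of Section \ref{sec:upperbnd} (especially Lemma \ref{lmm:DeltSum} and the martingale machinery of Lemma \ref{lmm:MartBound}) can be reused with essentially no modification; no new probabilistic input is required.
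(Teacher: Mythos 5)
Your proposal is correct, but it takes a somewhat different route from the paper's. The paper's proof is much shorter: it notes, directly from the one-step identity $X(x,t) = (1+\xi(x,t))\Gamma(x,t)$ and the definition $\Gamma(x-1,t+1) = \tfrac{1}{2}(X(x,t)+X(x-2,t))$, that
\[
\Gamma(x-1,t+1) - \Gamma(x,t) = \bigl(X(x,t) - \Gamma(x,t)\bigr) - \tfrac{1}{2}\bigl(X(x,t) - X(x-2,t)\bigr) = \xi(x,t)\Gamma(x,t) - \tfrac{1}{2}\bigl(X(x,t)-X(x-2,t)\bigr),
\]
bounds the first term by $N^{-1/4+\delta}$ on a high-probability event via Lemmas \ref{lem:momentest} and \ref{lm:nummoment} together with Markov and a union bound, and bounds the second term simply by invoking the already-proved Theorem \ref{thm:upperbnd} on the event $\Omega_U$. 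Your decomposition into $B(x,t)+M(x,t)$ is in fact algebraically the same thing: if you substitute the pseudo-chaos expansion of $X(x+1,t-1)-X(x-1,t-1)$ into the paper's two-term identity and isolate the $s=t-1$ contribution (using $\Delta(x-z,0) = \delta_{z,x+1} - \delta_{z,x-1}$), you recover exactly your boundary term $B$ plus your martingale sum $M$. The practical difference is that you re-run the Burkholder--Davis--Gundy machinery of Lemma \ref{lmm:MartBound} inside this proof, whereas the paper just cites Theorem \ref{thm:upperbnd}, which already encapsulates that computation. Both routes are sound; the paper's is more economical, while yours has the merit of being self-contained and of making explicit the kernel identity $p(a,r+1) - p(a+1,r) = -\tfrac{1}{2}\Delta(a,r)$, which would be a slightly cleaner way to see why $\Delta$ appears.
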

\begin{proof}
Due to the similarity of the proofs, let us consider only the difference $\Gamma(x,t) - \Gamma(x+1,t-1)$. 
Assume that we are in the event $\Omega_U$ from Theorem \ref{thm:upperbnd}. From \eqref{eq:Xform} and the definition of $\Gamma$, we have 
\begin{equation*}
    X(x,t) - \Gamma(x,t) = \xi(x,t) \Gamma(x,t).
\end{equation*}
Using equation \eqref{eq:CauMart}, Lemma \ref{lem:momentest} and Lemma \ref{lm:nummoment} with a sufficiently large $p$, and applying Markov's inequality and a union bound, we can assert that $|\xi(x,t) \Gamma(x,t)| \lesssim N^{-1/4 + \delta}$ on some set $\tilde{\Omega}_1$ with $\mathbb{P}(\tilde{\Omega}_1) = 1 - o(1)$. On $\Omega_U$, we also know that $|X(x,t) - X(x-2,t)| \le N^{-1/4 + \delta}$. Thus, on $\Omega_U \cap \tilde{\Omega}_1$, we have \begin{equation*}
    |\Gamma(x-1,t+1) - \Gamma(x,t)| \le |X(x,t) - \Gamma(x,t)| + \frac{1}{2}|X(x-2,t) - X(x,t)| \lesssim N^{-1/4 + \delta}.
\end{equation*}
We let $\Omega_{\Gamma}$ be the intersection $\Omega_U \cap \tilde{\Omega}_1$.
\end{proof}

\begin{lmm} \label{lem:diffexpand}
Fix some $a>0$, $b>0$ and $\epsilon \in (0,1/100)$.  Then there is some event $\Omega_{X}$ such that $\mathbb{P}(\Omega_{X}) = 1 -o(1)$ and on $\Omega_X$, we have that for all $(x,t)\in [-aN, aN]\times[0,bN]$, 
\begin{equation*}
    \frac{X(x+1,t) - X(x-1,t)}{X(x+1,t) + X(x-1,t)} = \frac{1}{2}\sum_{z\in \Z} \sum_{t- N^{\epsilon} \le s \le t}  \xi(z,s) \Delta(x-z,t-s) + O(N^{-1/4-\epsilon/16}),
\end{equation*}
where the $O$ term is uniform over $(x,t)$ in the above region. 
\end{lmm}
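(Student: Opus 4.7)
The plan is to start from the pseudo-chaos expansion in Lemma \ref{lem:PseudoChaos}, together with the identity $X(x+1,t)+X(x-1,t)=2\Gamma(x,t+1)$, which lets me rewrite the ratio in question as
\[
\frac{X(x+1,t)-X(x-1,t)}{X(x+1,t)+X(x-1,t)} = \frac{1}{2\Gamma(x,t+1)}\sum_{z\in\Z}\sum_{s=1}^{t}\Delta(x-z,t-s)\,\xi(z,s)\,\Gamma(z,s).
\]
I would then split the $s$-sum at the cutoff $\lceil t-N^\epsilon\rceil$ and analyze the two pieces separately, with the goal that in the recent-time part, $\Gamma(z,s)$ can be replaced by $\Gamma(x,t+1)$, which then cancels the denominator and leaves exactly the claimed leading term.

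For the old-time tail ($s\le t-N^\epsilon$), note that $\Gamma(z,s)$ is $\mf_{s-1}$-measurable while $\xi(z,s)$ has mean zero and is independent of $\mf_{s-1}$, so the partial sums in $s$ form a martingale. Applying Lemma \ref{lmm:MartBound}, Cauchy--Schwarz together with Lemmas \ref{lem:momentest} and \ref{lm:nummoment} to get $\|\xi(z,s)\Gamma(z,s)\|_{L^p}^2 \lesssim N^{-1/2}$, and Lemma \ref{lmm:DeltSum} to get $\sum_z \Delta(x-z,t-s)^2 \lesssim (t-s)^{-3/2}$, I would obtain
\[
\Bigl\|\sum_{z,\,s\le t-N^\epsilon}\Delta(x-z,t-s)\,\xi(z,s)\,\Gamma(z,s)\Bigr\|_{L^p}^2 \lesssim N^{-1/2}\sum_{k\ge N^\epsilon}k^{-3/2} \lesssim N^{-1/2-\epsilon/2}.
\]
Taking $p$ large, Markov's inequality, and a union bound over $[-aN,aN]\times[0,bN]$ make this old-time tail uniformly $O(N^{-1/4-\epsilon/8})$ on an event of probability $1-o(1)$.

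For the recent-time sum ($s > t-N^\epsilon$), I would insert the decomposition $\Gamma(z,s)=\Gamma(x,t+1)+(\Gamma(z,s)-\Gamma(x,t+1))$. The $\Gamma(x,t+1)$ contribution produces the claimed leading term exactly after division by the denominator. The residue is controlled by a deterministic estimate on $|\Gamma(z,s)-\Gamma(x,t+1)|$ obtained by walking from $(z,s)$ to $(x,t+1)$ in $O(|z-x|+(t+1-s))$ steps: Lemma \ref{lem:timediff} handles diagonal steps $(\pm 1,-1)$, and Theorem \ref{thm:upperbnd} applied twice handles pure spatial steps via $\Gamma(z+1,s)-\Gamma(z-1,s)=\tfrac{1}{2}(X(z+2,s-1)-X(z-2,s-1))$. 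On a high-probability event each elementary step costs $N^{-1/4+\delta}$, so
\[
|\Gamma(z,s)-\Gamma(x,t+1)| \lesssim (|z-x|+(t+1-s))\,N^{-1/4+\delta}.
\]
Combining this with a sup bound $|\xi(z,s)|\lesssim N^{-1/4+\delta}$ (from Lemma \ref{lem:momentest}, Markov, and a union bound) and the Gaussian-style estimate $\sum_z|\Delta(x-z,t-s)|(|z-x|+(t+1-s))\lesssim 1+\sqrt{t-s}$, the sum over $s$ in a window of length $N^\epsilon$ yields a residue of size at most $N^{-1/2+2\delta}\cdot N^{3\epsilon/2}$. Finally Theorem \ref{thm:lowerbnd} gives $\Gamma(x,t+1)\gtrsim N^{-\epsilon_1}$, so dividing by the denominator inflates the errors by at most $N^{\epsilon_1}$; choosing $\delta$ and $\epsilon_1$ small enough compared to $\epsilon$ makes the total error $O(N^{-1/4-\epsilon/16})$ for any $\epsilon<1/100$. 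The event $\Omega_X$ is taken to be the intersection of the high-probability events invoked in the above steps.

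The main obstacle is the recent-time step: one must obtain uniform control of $\Gamma(z,s)-\Gamma(x,t+1)$ over all $(z,s)$ where $\Delta(x-z,t-s)$ is non-negligible, with spatial range roughly $\sqrt{N^\epsilon}$ and temporal range $N^\epsilon$, while the cumulative $N^\delta$ slack from the pointwise sup bounds on $\Gamma$-differences and on $\xi$ stays negligible. The specific exponent $\epsilon/16$ in the statement reflects the tradeoff among $\delta$, $\epsilon_1$, and $\epsilon$ in this step.
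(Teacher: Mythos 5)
Your proposal follows the same route as the paper: split the pseudo-chaos expansion \eqref{eq:xdiff} at $s = t - N^{\epsilon}$, bound the old-time tail via the martingale estimate of Lemma~\ref{lmm:MartBound}, replace $\Gamma(z,s)$ by $\Gamma(x,t+1)$ in the recent part using the diagonal increments of Lemma~\ref{lem:timediff}, and divide through with the help of Theorem~\ref{thm:lowerbnd}. Two small remarks: (i) because $\Delta(x-z,t-s)\neq 0$ forces $|x-z|\le t-s+1$ and the right parity, a chain of $t+1-s$ diagonal moves already connects $(z,s)$ to $(x,t+1)$, so the horizontal comparisons via Theorem~\ref{thm:upperbnd} are not needed (the paper indeed uses only the diagonal bound); (ii) in your Gaussian-style estimate, the term $(t+1-s)\sum_z|\Delta(x-z,t-s)|$ contributes of order $t-s$, not $\sqrt{t-s}$, so the per-$s$ bound should read $\lesssim 1+(t-s)$ and the residue $\lesssim N^{-1/2+2\delta}N^{2\epsilon}$ rather than $N^{-1/2+2\delta}N^{3\epsilon/2}$ --- still comfortably subdominant to $N^{-1/4-\epsilon/16}$ for $\epsilon<1/100$, so the conclusion is unaffected.
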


\begin{proof}
Recall from equation \eqref{eq:xdiff} of Section \ref{sec:upperbnd} that
\begin{equation*}
    X(x+1,t) - X(x-1,t) = \sum_{z\in \Z}\sum_{s=1}^t \Delta(x-z,t-s) \xi(z,s) \Gamma(z,s).
\end{equation*}
The proof technique of Lemma \ref{lmm:MartBound} allows us to bound, for any $p\ge 1$, 
\begin{align*}
    &\biggl\|\sum_{z\in \Z} \sum_{1\le s < t - N^{\epsilon}} \Delta(x-z,t-s) \xi(z,s) \Gamma(z,s)\biggr\|_{L^p}^2\\
    &\le C(p) \sum_{1\le s< t- N^{\epsilon}} \sum_z \Delta(x-z,t-s)^2 \|\xi(z,s) \Gamma(z,s)\|_{L^p}^2.
\end{align*}
From the proof of Theorem \ref{thm:upperbnd}, recall that $\|\xi(z,s) \Gamma(z,s)\|_{L^p}^2 \lesssim N^{-1/2}$. Lastly, by  Lemma \ref{lmm:DeltSum},
\begin{equation*}
\begin{aligned}
    \sum_{1\le s < t- N^{\epsilon}} \sum_z \Delta(x-z,t-s)^2  \lesssim  N^{-\epsilon/2}.
\end{aligned}
\end{equation*}
Combining all of the above, using Markov's inequality with large enough $p$, and applying a union bound, we see that there is an event $\tilde{\Omega}_B$ with $\mathbb{P}(\tilde{\Omega}_B) = 1 - o(1)$ on which 
\begin{equation} \label{eq:beforetildeOm}
    X(x+1,t) -X(x-1,t) = \sum_{z\in \Z} \sum_{t-N^{\epsilon}\le s \le t} \Delta(x-z,t-s) \xi(z,s) \Gamma(z,s) + O(N^{-1/4 - \epsilon/8})
\end{equation}
uniformly over all $(x,t)$ in our region. On the set $\Omega_L$ from Theorem \ref{thm:lowerbnd}, we have that $X(x+1,t) + X(x-1,t) \gtrsim N^{-\epsilon/16}$ uniformly of $(x,t)$. Combining this with the above, we have that on the event $\tilde{\Omega}_B \cap \Omega_L$, 
\begin{align} 
    &\frac{X(x+1,t) - X(x-1,t)}{X(x+1,t) + X(x-1,t)} \notag \\ 
    &= \sum_{z\in \Z} \sum_{t - N^{\epsilon} \le s \le t} \Delta(x-z,t-s) \xi(z,s) \frac{\Gamma(z,s)}{2\Gamma(x,t+1)} + O(N^{-1/4-\epsilon/16}). \label{eq:beforeOmegaB}
\end{align}
Now recall that  $|\Gamma(z,s) - \Gamma(z\pm 1, s-1)| \lesssim N^{-1/4 + \epsilon}$
for all $(z,s) \in[-aN,aN] \times[0,bN] $ on the event $\Omega_{\Gamma}$ from Lemma \ref{lem:timediff}. This implies  that for any $(z,s)$ such that $\Delta(x-z,t-s) \ne 0$ and $t - N^{\epsilon}  \le s \le t$, we have 
\[
|\Gamma(z,s) - \Gamma(x,t+1)| \lesssim N^{-1/4 + 2 \epsilon}.
\]
Also recall that on the event $\Omega_L$, we have $\Gamma(x,t) \gtrsim N^{-\epsilon/16}$ uniformly over $(x,t)$. Thus, on $\Omega_L\cap \Omega_\Gamma$, we have 
\[
\frac{\Gamma(z,s)}{\Gamma(x,t+1)} = 1 + O(N^{-1/4 + 3 \epsilon}).
\]
uniformly over all $x$, $z$, $s$ and $t$ such that $(x,t)\in [-aN, aN]\times[0,bN]$,  $\Delta(x-z,t-s) \ne 0$ and $t - N^{\epsilon}  \le s \le t$.

Lemma \ref{lem:momentest} shows that there is an event $\Omega_B$ with $\P(\Omega_B) = 1-o(1)$ on which every $|\xi(z,s)|$, for $(z,s) \in [-aN,aN] \times[0,bN]$, is  $\lesssim N^{-1/4 + \epsilon}$. 
Furthermore, note that for any $s$, 
$$
    \sum_{z} |\Delta(x-z,t-s)| \le \sum_{z} (p(x-z+1,t-s) + p(x-z-1,t-s)) \le 2,
$$
and therefore,
$$
\sum_{z\in \Z} \sum_{t- N^{\epsilon} \le s \le t} |\Delta(x-z,t-s)| \le 2 N^{\epsilon}.
$$
Thus, on $\Omega_L\cap \Omega_\Gamma\cap \Omega_B$, 
\begin{align*}
    &\sum_{z\in \Z} \sum_{ t- N^{\epsilon} \le s \le t} \Delta(x-z,t-s) \xi(z,s) \frac{\Gamma(z,s)}{\Gamma(x,t+1)} \\
    &= \sum_{z\in \Z} \sum_{t- N^{\epsilon} \le s \le t} \Delta(x-z,t-s) \xi(z,s) + O(N^{-1/2 + 5\epsilon}).
\end{align*}
Thus, we can finally define $\Omega_{X}= \tilde{\Omega}_B \cap \Omega_L \cap \Omega_{\Gamma} \cap \Omega_B$ to finish the proof. 
\end{proof}

Our next lemma gives an upper bound on the size of the right side in Lemma \ref{lem:diffexpand}.
\begin{lmm} \label{lem:Kbnd}
Fixing some $\epsilon >0$, define 
$$K(x,t):= \frac{1}{2}\sum_{z\in \Z} \sum_{ t- N^{\epsilon} \le s \le t} \Delta(x-z,t-s) \xi(z,s).$$ Fix some $a>0$, $b>0$, and  $\delta>0$. Then there is an event $\Omega_K$ with  $\mathbb{P}(\Omega_K) = 1- o(1)$ on which we have for all $(x,t)\in [-aN, aN]\times[0,bN]$ that
\begin{equation*}
    K(x,t) \lesssim N^{-1/4 + \delta}.
\end{equation*}
Furthermore, we also have that
\begin{equation}\label{eq:Kmtbnd}
     \|K(x,t)\|_{L^p}^2 \lesssim N^{-1/2}.
\end{equation}
\end{lmm}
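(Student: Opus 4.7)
The plan is to prove the $L^p$ bound \eqref{eq:Kmtbnd} first via a martingale argument, then derive the pointwise high-probability bound as a standard consequence of Markov's inequality and a union bound.

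First observe that $K(x,t)$ is a linear combination of the independent mean-zero random variables $\{\xi(z,s)\}_{z\in \Z,\, t - N^\epsilon \le s \le t}$, since $\xi(z,s)$ depends only on the noise variable $y(z,s)$ and these are i.i.d. Ordering by $s$, set
$$M_r := \frac{1}{2}\sum_{z\in \Z}\sum_{t-N^\epsilon \le s \le r} \Delta(x-z,t-s)\xi(z,s), \qquad t - N^\epsilon \le r \le t,$$
so that $K(x,t) = M_t$ and $\{M_r\}$ is a martingale with respect to the filtration generated by the noise up to time $r$. Applying the two-step Burkholder--Davis--Gundy argument used in the proof of Lemma \ref{lmm:MartBound} (BDG in $r$, Minkowski, then BDG in $z$ using conditional independence), we obtain, for any $p \ge 2$,
$$\|K(x,t)\|_{L^p}^2 \lesssim \sum_{s=t-\lceil N^\epsilon\rceil}^{t}\sum_{z\in \Z} \Delta(x-z,t-s)^2 \, \|\xi(z,s)\|_{L^p}^2.$$

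By Lemma \ref{lem:momentest}, $\|\xi(z,s)\|_{L^p}^2 \lesssim N^{-1/2}$ uniformly in $z,s$. For the remaining double sum, Lemma \ref{lmm:DeltSum} gives $\sum_{z} \Delta(x-z,r)^2 \lesssim r^{-3/2}$ for $r \ge 1$, and the boundary term $r = 0$ contributes $\sum_z \Delta(z,0)^2 = 2$, which is $O(1)$. Summing over $r = t-s \in \{0,1,\ldots,\lceil N^\epsilon\rceil\}$, the tail is dominated by the convergent series $\sum_{r\ge 1} r^{-3/2}$, so the total is $O(1)$. Combining, we obtain $\|K(x,t)\|_{L^p}^2 \lesssim N^{-1/2}$, which proves \eqref{eq:Kmtbnd}.

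For the pointwise statement, fix $p$ large enough that $p\delta > 3$. By Markov's inequality and the $L^p$ bound just established,
$$\P\bigl(|K(x,t)| > N^{-1/4+\delta}\bigr) \le \frac{\|K(x,t)\|_{L^p}^p}{N^{p(-1/4+\delta)}} \lesssim \frac{N^{-p/4}}{N^{-p/4+p\delta}} = N^{-p\delta}.$$
There are $O(N^2)$ lattice points $(x,t)$ in $[-aN,aN]\times[0,bN]$, so taking a union bound and defining $\Omega_K$ to be the intersection of the complementary events gives $\P(\Omega_K) \ge 1 - O(N^{2-p\delta}) = 1 - o(1)$, and on $\Omega_K$ the desired uniform bound holds.

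There is no substantive obstacle here: the argument is a routine combination of results already proven. The only minor point to be careful about is the $r = 0$ boundary term in the $\Delta$-sum, since Lemma \ref{lmm:DeltSum} is stated for $t\ge 1$; this is handled by checking directly that $\Delta(\pm 1,0) = \mp 1$ and $\Delta(z,0) = 0$ otherwise, contributing only an additive $O(1)$.
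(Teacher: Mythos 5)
Your proof is correct and matches the paper's argument essentially step for step: apply the Burkholder--Davis--Gundy inequality (as in Lemma \ref{lmm:MartBound}) to bound $\|K(x,t)\|_{L^p}^2$, control the two factors by Lemma \ref{lem:momentest} and Lemma \ref{lmm:DeltSum}, then use Markov's inequality with large $p$ and a union bound. The only addition is your explicit treatment of the $r=0$ boundary term in the $\Delta$-sum, which is a small but legitimate point of care that the paper glosses over.
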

(Note that $K$ depends on $\epsilon$, but we prefer to write $K$ instead of $K_\epsilon$ to lighten notation.)
\begin{proof}
Since $K(x,t)$ is a sum of independent terms, we can apply the Burkholder--Davis--Gundy inequality (as in the proof of Lemma \ref{lmm:MartBound}) to get
\begin{equation*}
    \|K(x,t)\|_{L^p}^2 \lesssim \sum_{z\in \Z} \sum_{ t- N^{\epsilon} \le s \le t} \Delta(x-z,t-s)^2 \|\xi(z,s)\|_{L^p}^2.
\end{equation*}
From Lemma \ref{lem:momentest}, we know that $\|\xi(z,s)\|_{L^p}^2 \lesssim N^{-1/2}$, and by Lemma \ref{lmm:DeltSum},  
\[
\sum_{z\in \Z} \sum_{ t- N^{\epsilon} \le s \le t}\Delta(x-z,t-s)^2 \lesssim 1.
\]
We can now Markov's inequality with sufficiently large $p$ and take a union bound to complete the proof. 
\end{proof}

The following corollary is the main result of this section.
\begin{cor} \label{cor:Xd4}
Fix some $a>0$, $b>0$, and $\epsilon \in (0,1/100)$. Then there is some event $\Omega_{4}$ with $\P(\Omega_4)=1-o(1)$ such that on $\Omega_{4}$, we have that for all $(x,t)\in [-aN, aN] \times[0,bN]$, 
\begin{equation*}
\biggl(\frac{X(x+1,t) - X(x-1,t)}{X(x+1,t) + X(x-1,t)}\biggr)^4 = K(x,t)^4 + O(N^{-1-\epsilon/32}).
\end{equation*}
\end{cor}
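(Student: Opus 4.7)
The plan is to combine Lemma \ref{lem:diffexpand} and Lemma \ref{lem:Kbnd} via the elementary algebraic identity
\[
R^4 - K^4 = (R-K)(R^3 + R^2 K + R K^2 + K^3),
\]
where $R := (X(x+1,t)-X(x-1,t))/(X(x+1,t)+X(x-1,t))$ and $K := K(x,t)$. The idea is that Lemma \ref{lem:diffexpand} controls the additive error $R - K$, while Lemma \ref{lem:Kbnd} controls the absolute size of $K$ (and hence of $R$), and multiplying these together gives the desired fourth-power estimate.

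First I would fix a small exponent parameter $\delta := \epsilon/96$ and apply Lemma \ref{lem:Kbnd} with this $\delta$ in place of the generic small $\delta$ there; this yields an event $\Omega_K$ with $\P(\Omega_K) = 1 - o(1)$ on which $|K(x,t)| \lesssim N^{-1/4 + \epsilon/96}$ uniformly for $(x,t) \in [-aN,aN]\times[0,bN]$. I would then set $\Omega_4 := \Omega_X \cap \Omega_K$, where $\Omega_X$ is the event from Lemma \ref{lem:diffexpand}; note $\P(\Omega_4) = 1 - o(1)$ by a union bound.

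On $\Omega_4$, Lemma \ref{lem:diffexpand} gives $|R - K| \lesssim N^{-1/4 - \epsilon/16}$, and combined with the bound on $K$ this yields
\[
|R| \le |K| + |R - K| \lesssim N^{-1/4 + \epsilon/96} + N^{-1/4 - \epsilon/16} \lesssim N^{-1/4 + \epsilon/96}.
\]
Substituting these into the factorization above,
\[
|R^4 - K^4| \le |R - K|\,\bigl(|R|^3 + |R|^2|K| + |R||K|^2 + |K|^3\bigr) \lesssim N^{-1/4 - \epsilon/16}\cdot N^{-3/4 + 3\epsilon/96}.
\]
Since $-\epsilon/16 + 3\epsilon/96 = -6\epsilon/96 + 3\epsilon/96 = -\epsilon/32$, the right-hand side is $\lesssim N^{-1 - \epsilon/32}$, which is exactly the claimed bound.

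There is no real obstacle here; the argument is purely bookkeeping, and the only decision is choosing $\delta$ small enough relative to $\epsilon$ so that the final exponent $-1 - \epsilon/16 + 3\delta$ is at most $-1 - \epsilon/32$. All probabilistic work has already been absorbed into the definitions of $\Omega_X$ and $\Omega_K$ in the preceding two lemmas.
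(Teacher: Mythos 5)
Your proof is correct and is essentially the paper's argument: the paper likewise intersects $\Omega_X$ from Lemma \ref{lem:diffexpand} with $\Omega_K$ from Lemma \ref{lem:Kbnd} (taking $\delta = \epsilon/100$ rather than your $\epsilon/96$, both of which work), and then "takes the fourth power," which is precisely the factorization $R^4 - K^4 = (R-K)(R^3 + R^2K + RK^2 + K^3)$ that you wrote out explicitly. Your version is just a more detailed spelling-out of the same bookkeeping.
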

\begin{proof}
We can bound $|K(x,t)| \lesssim N^{-1/4 + \epsilon/100}$ for all $(x,t)\in [-aN, aN] \times[0,bN]$ with high probability on $\Omega_K$, as in Lemma \ref{lem:Kbnd}.  Then applying Lemma \ref{lem:diffexpand} and taking the fourth power completes the proof.
\end{proof}


\section{The main argument} \label{sec:mainarg}
In this section, we will carry out the most important step in our proof of Theorem \ref{thm:mainresult}, which is to relate the polymer surface $\fpoly$ with the function $f$ defined in equation \eqref{eq:fxtdef} from Section~\ref{sec:prelim} (after fixing $N$). 
First, recall the constant $c$ defined in equation \eqref{eq:cdef}, and after fixing some $\epsilon \in (0,1/100)$, define the `renormalization term' 
\begin{equation}\label{eq:ydef}
    Y(x,t) := \frac{16c}{\beta^4} \sum_{z\in \Z}\sum_{s=1}^t p(x-z,t-s) K(z,s)^4.
\end{equation}
The main result of this section (Theorem \ref{thm:errorprop2}) is that with high probability, 
\[
f(x,t) = \fpoly(x,t) + Y(x,t) + o(1)
\]
for all $(x,t)$ in a given region of the form $[-aN, aN] \times[0,bN]$. To prove this result, we need the following crucial lemma about $Y$.
\begin{lmm} \label{lem:Ydiff}
Fix some $a>0$, $b>0$, and $\epsilon \in (0,1/100)$. Then there is some event $\Omega_{Y}$ with $\P(\Omega_Y)=1-o(1)$ such that on $\Omega_{Y}$, we have that for all $(x,t)\in [-aN, aN]\times[0,bN]$, 
\begin{equation*}
    |Y(x+1,t) - Y(x-1,t)| \lesssim N^{-1+7\epsilon}\le N^{-3/4 - 2\epsilon}.
\end{equation*}
\end{lmm}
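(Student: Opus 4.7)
The plan is to extract cancellation from $\sum_{z\in\Z}\Delta(x-z,t-s)=0$ combined with the translation invariance of the noise. Starting from \eqref{eq:ydef}, we have
\[
Y(x+1,t)-Y(x-1,t) = \frac{16c}{\beta^4}\sum_{z\in\Z}\sum_{s=1}^t \Delta(x-z,t-s)\,K(z,s)^4.
\]
Since $p(\cdot,t-s)$ is a probability measure the inner $z$-sum of $\Delta(x-z,t-s)$ vanishes, and since the noise is i.i.d., $\mu_s := \E K(z,s)^4$ does not depend on $z$. Subtracting $\mu_s$ inside the sum therefore costs nothing, and I obtain $Y(x+1,t)-Y(x-1,t) = (16c/\beta^4)\,M(x,t)$ with
\[
M(x,t) := \sum_{z,s}\Delta(x-z,t-s)\,\tilde K(z,s), \qquad \tilde K(z,s) := K(z,s)^4 - \mu_s.
\]

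The next step is an $L^2$-bound on $M(x,t)$. The key structural input is the local time-dependence of $K$: by definition $K(z,s)$ is a linear combination of noise variables $\xi(w,r)$ with $r\in[s-N^\epsilon,s]$, so $\tilde K(z_1,s_1)$ and $\tilde K(z_2,s_2)$ are independent (and have mean zero) whenever $|s_1-s_2|>N^\epsilon$; such pairs contribute nothing to $\E M^2$. For the surviving pairs with $|s_1-s_2|\le N^\epsilon$, Cauchy--Schwarz together with $\|\tilde K\|_{L^2}\le \|K\|_{L^8}^4\lesssim N^{-1}$ (from Lemma \ref{lem:Kbnd} applied with $p=8$) yields $|\E[\tilde K(z_1,s_1)\tilde K(z_2,s_2)]|\lesssim N^{-2}$. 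Combined with the sharp $\ell^1$-estimate $\sum_z |\Delta(z,t)| \lesssim (t+1)^{-1/2}$ --- which follows by telescoping $|\Delta(\cdot,t)|$ around the mode of $p(\cdot,t)$, or from the local CLT, and strengthens the $\ell^2$-version given in Lemma \ref{lmm:DeltSum} --- the double sum collapses to
\[
\E M(x,t)^2 \;\lesssim\; N^{-2}\sum_{|s_1-s_2|\le N^\epsilon} \frac{1}{\sqrt{(t-s_1+1)(t-s_2+1)}} \;\lesssim\; N^{-2+2\epsilon}.
\]

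To lift this to an $L^p$-bound for arbitrarily large $p$, I would use that $M(x,t)$ is a polynomial of degree four in the i.i.d.~noise $\xi$. Either hypercontractivity for Walsh chaos, or (more in the spirit of the paper) iterating the Burkholder--Davis--Gundy argument of Lemma \ref{lmm:MartBound} one time-slice at a time and four layers deep, yields $\|M(x,t)\|_{L^p}\lesssim C(p)\, N^{-1+C'\epsilon}$ for every integer $p\ge 2$, with constants $C(p), C'$ independent of $N$. Markov's inequality with $p$ chosen (as a fixed function of $\epsilon$) large enough so that $(7-C')\epsilon p > 3$, followed by a union bound over the $O(N^2)$ lattice points in $[-aN,aN]\times[0,bN]$, then produces an event $\Omega_Y$ of probability $1-o(1)$ on which $|Y(x+1,t)-Y(x-1,t)|\lesssim N^{-1+7\epsilon}$ uniformly, as required.

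The main obstacle will be the $L^p$-step: controlling the degree-four polynomial chaos $M$ in (possibly non-Gaussian) i.i.d.~noise. Rather than appealing to a general hypercontractivity theorem, the paper's own martingale/BDG framework can be iterated --- introduce the time-filtration $\mf_r$ generated by $\{\xi(\cdot,r')\}_{r'\le r}$, decompose $M = \sum_r D_r$ into the martingale differences $D_r = \E[M\mid \mf_r]-\E[M\mid \mf_{r-1}]$, and bound $\|D_r\|_{L^p}$ using the moment estimates for $\xi$ (Lemma \ref{lem:momentest}) and for $K$ (Lemma \ref{lem:Kbnd}) already proved. Each BDG application peels off one layer of the quartic chaos, and four iterations suffice; this keeps everything self-contained and matches the style of Section \ref{sec:upperbnd}.
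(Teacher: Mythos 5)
Your proposal is correct in outline and shares the crucial first move with the paper — using $\sum_{z}\Delta(x-z,t-s)=0$ to center $K(z,s)^4$ around its mean $\overline{m}(s)$ — but diverges on the main technical step: obtaining high-moment bounds for the resulting centered sum. The paper does not appeal to chaos-order structure or hypercontractivity at all; instead it introduces the deterministic congruence-class decomposition $Y(x+1,t)-Y(x-1,t)=\sum_{(l_1,l_2)}Y^{(l_1,l_2)}$, with the index $z$ taken mod $2\lceil N^\epsilon\rceil$ and $s$ mod $\lceil N^\epsilon\rceil$, which makes each $Y^{(l_1,l_2)}$ a sum of genuinely \emph{independent} terms (since distinct summands within a block use disjoint windows of noise). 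A single application of the BDG/Minkowski argument of Lemma \ref{lmm:MartBound}, together with $\E|K(z,s)^4-\overline{m}(s)|^{2p}\lesssim N^{-2p}$ and the existing $\ell^2$ estimate $\sum_{z,s}\Delta^2\lesssim 1$, then gives $(\E|Y^{(l_1,l_2)}|^{2p})^{1/p}\lesssim N^{-2}$, and the crude $|\sum_{(l_1,l_2)}Y^{(l_1,l_2)}|^{2p}\le N^{4\epsilon p}\sum|Y^{(l_1,l_2)}|^{2p}$ costs only a factor $N^{O(\epsilon p)}$, giving the stated bound with room to spare. Your route instead exploits finite-range dependence in time to get a sharper $L^2$ bound (using the genuine $\ell^1$ estimate $\sum_z|\Delta(z,r)|\lesssim (r+1)^{-1/2}$, which the paper never needs) and then lifts to $L^p$ by treating $M$ as a degree-four polynomial chaos. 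That lifting is exactly the step you flagged as the obstacle, and it is indeed the nontrivial part: hypercontractivity for non-Gaussian, non-bounded noise is not off the shelf, and an iterated BDG ``four layers deep'' requires careful bookkeeping of mixed chaos components that you have not written out. Your approach, if completed, would be slightly sharper (closer to $N^{-1+\epsilon}$ than the paper's implicit $N^{-1+3\epsilon}$), but the paper's block decomposition sidesteps the chaos-order analysis entirely and is the more elementary and self-contained argument.
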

\begin{proof}
First note that 
\begin{equation*}
Y(x+1,t) - Y(x-1,t) = \frac{16c}{\beta^4} \sum_{z\in \Z}\sum_{s=1}^t \Delta(x-z,t-s) K(z,s)^4
\end{equation*}
Given $s$, the mean $\mathbb{E}[K(z,s)^4]= \overline{m}(s)$ has no dependence on $z$. Since 
\[
\sum_{z\in \Z} \Delta(x-z,t-s) =0,
\]
the above expression can be written as
\begin{equation*}
Y(x+1,t) - Y(x-1,t) = \frac{16 c}{\beta^4} \sum_{z\in \Z}\sum_{s=1}^t \Delta(x-z,t-s) (K(z,s)^4 - \overline{m}(s)).
\end{equation*}
Now, define $Y^{(l_1,l_2)}$ as
\begin{equation*}
Y^{(l_1,l_2)} := \frac{16 c}{\beta^4} \sum_{\substack{z\ \equiv\  l_1 \ (\mathrm{mod} \ 2 \lceil N^{\epsilon} \rceil)\\ s\ \equiv \ l_2 \ (\mathrm{mod} \ \lceil N^{\epsilon} \rceil), \, s\le t}} \Delta(x-z,t-s) (K(z,s)^4 - \overline{m}(s)).
\end{equation*} 
The point of the introduction of $Y^{(l_1,l_2)}$ is that it is  a sum of independent terms. It is clear that
\begin{equation*}
Y(x+1,t) - Y(x-1,t) = \sum_{(l_1,l_2)} Y^{(l_1,l_2)}.
\end{equation*}
Thus, for any $p\ge 1$,
\begin{align*}
|Y(x+1,t) - Y(x-1,t)|^{2p} &\le (N^{2\epsilon} \max_{(l_1,l_2)}|Y^{(l_1,l_2)}|)^{2p}\\
&\le N^{4\epsilon p}\sum_{(l_1,l_2)}|Y^{(l_1,l_2)}|^{2p}.
\end{align*}
Thus, we get
\begin{equation*}
\mathbb{E}|Y(x+1,t) - Y(x-1,t)|^{2p} \le N^{6\epsilon p} \max_{(l_1,l_2)} \mathbb{E}|Y^{(l_1,l_2)}|^{2p}.
\end{equation*}
Applying the Burkholder--Davis--Gundy inequality and Minkowski's inequality as in the proof of Lemma \ref{lmm:MartBound}, we have
\begin{equation*}
(\mathbb{E}|Y^{(l_1,l_2)}|^{2p})^{1/p} \lesssim  \sum_{\substack{z\ \equiv\  l_1 \ (\mathrm{mod} \ 2 \lceil N^{\epsilon} \rceil)\\ s\ \equiv \ l_2 \ (\mathrm{mod} \ \lceil N^{\epsilon} \rceil), \, s\le t}} \Delta(x-z,t-s)^2(\mathbb{E}|K(z,s)^4 - \overline{m}(s)|^{2p})^{1/p}.
\end{equation*}
Equation \eqref{eq:Kmtbnd} in Lemma \ref{lem:Kbnd} shows that $\mathbb{E}|K(y,s)|^{8p} \lesssim N^{-2p}$. Similarly, $\overline{m}(s) = O(N^{-1})$. Thus, $\mathbb{E}|K(y,s) - \overline{m}(s)|^{2p}\lesssim N^{-2p}$.  Furthermore, by Lemma \ref{lmm:DeltSum}, 
\[
\sum_{z\in \Z}\sum_{s=1}^t|\Delta(x-z,t-s)|^2 \lesssim 1.
\]
Combining these facts shows that
$$
\mathbb{E}|Y(x+1,t) - Y(x-1,t)|^{2p}\lesssim N^{6\epsilon p} N^{-2p},
$$
and we can get our desired event $\Omega_{Y}$ by taking high enough moments and applying Markov's inequality and a union bound.
\end{proof}

We now arrive at the main result of this section. 
\begin{thm} \label{thm:errorprop2}
Fix some $a>0$, $b>0$ and $\epsilon \in (0,1/100)$. Let $Y(x,t)$ be defined as above. Then there is an event $\Omega_2$ with $\P(\Omega_2)=1-o(1)$, such that on $\Omega_2$, we have that for all $(x,t)\in [-aN, aN]\times[0,bN]$, 
\[
f(x,t) = \fpoly(x,t) + Y(x,t) + o(1),
\]
where the $o(1)$ term is uniform in $(x,t)$.
\end{thm}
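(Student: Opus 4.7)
The plan is to introduce $\delta(x,t) := f(x,t) - \fpoly(x,t) - Y(x,t)$ and prove $\delta = o(1)$ uniformly via a recursive representation. Since $f(x,0) = \fpoly(x,0) = Y(x,0) = 0$, the initial condition is $\delta(x,0) = 0$. Subtracting the recursions \eqref{eq:frec2} and \eqref{eq:polyrec}, whose noise and drift terms are identical, gives
$$\delta(x,t) = \tfrac{1}{2}\bigl[\delta(x-1,t-1) + \delta(x+1,t-1)\bigr] + E(x,t),$$
with
$$E(x,t) := \phi(\nabla f) - \poly(\nabla \fpoly) - \Bigl[Y(x,t) - \tfrac{1}{2}\bigl(Y(x-1,t-1) + Y(x+1,t-1)\bigr)\Bigr],$$
where $\nabla f := f(x+1,t-1)-f(x-1,t-1)$ and $\nabla \fpoly$ is defined analogously. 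The random walk recurrence $p(y,r) = \tfrac{1}{2}[p(y-1,r-1) + p(y+1,r-1)]$ makes the $s\le t-1$ contributions to the $Y$-bracket telescope away, leaving only the $s=t$ term:
$$Y(x,t) - \tfrac{1}{2}\bigl(Y(x-1,t-1) + Y(x+1,t-1)\bigr) = \tfrac{16c}{\beta^4}K(x,t)^4.$$
Iterating the recursion with $\delta(\cdot,0) \equiv 0$ produces the Duhamel-type representation $\delta(x,t) = \sum_{z\in \Z}\sum_{s=1}^t p(x-z,t-s)\, E(z,s)$, so the task reduces to bounding this weighted sum.

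Next, split $E = E_1 + E_2$, where $E_1 := \phi(\nabla \fpoly) - \poly(\nabla \fpoly) - \tfrac{16c}{\beta^4}K(x,t)^4$ depends only on polymer data, and $E_2 := \phi(\nabla f) - \phi(\nabla \fpoly)$ carries the dependence on $f$. For $E_1$, the matching of derivatives in \eqref{eq:phiderivs} and \eqref{eq:polyderivs}, combined with the evenness of both $\phi$ and $\poly$, gives $\phi(u) - \poly(u) = cu^4 + O(u^6)$; then \eqref{eq:fourthpower}, Corollary \ref{cor:Xd4} applied at time $t-1$, and Corollary \ref{cor:diffcontrol} reduce $E_1$ to
$$E_1(x,t) = \tfrac{16c}{\beta^4}\bigl[K(x,t-1)^4 - K(x,t)^4\bigr] + O(N^{-1-\epsilon/32}),$$
whose second piece contributes $O(t\cdot N^{-1-\epsilon/32}) = o(1)$ after summing against $p$. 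For the oscillatory telescoping piece, a discrete summation by parts in $s$ converts $\sum_s p(x-z,t-s)[K(z,s-1)^4 - K(z,s)^4]$ into $\sum_s[p(x-z,t-s-1)-p(x-z,t-s)]K(z,s)^4$ plus boundary terms. Combining the heat-equation identity $p(y,r)-p(y,r-1) = \tfrac{1}{2}\Delta^2_y p(y,r-1)$, the $L^1$ bound $\sum_y|p(y,r)-p(y,r-1)| = O(1/r)$, and the pointwise bound $|K^4| \lesssim N^{-1+O(\epsilon)}$ from Lemma \ref{lem:Kbnd} gives that this contribution is at most $O(N^{-1+O(\epsilon)}\log N) = o(1)$.

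For $E_2$, Taylor expansion of $\phi$ around $\nabla \fpoly$ gives
$$E_2 = \phi'(\nabla\fpoly)(\nabla Y + \nabla \delta) + \tfrac{1}{2}\phi''(\nabla\fpoly)(\nabla Y + \nabla\delta)^2 + O((\nabla Y + \nabla \delta)^3),$$
using $\nabla(f-\fpoly) = \nabla Y + \nabla \delta$. Since $\phi$ is even, $\phi'(\nabla\fpoly) = \beta\nabla\fpoly + O((\nabla\fpoly)^3) = O(N^{-1/4+\epsilon})$ by Corollary \ref{cor:diffcontrol}, and $|\nabla Y| = O(N^{-3/4-2\epsilon})$ by Lemma \ref{lem:Ydiff}, so the $\nabla Y$-contributions to $E_2$ are $O(N^{-1-\epsilon})$ pointwise and their $\sum p$-integral is $o(1)$. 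The main obstacle is the self-referential $\nabla\delta$-contribution: a crude absolute-value bound yields an amplification factor $(1+CN^{-1/4+\epsilon})^t$ which blows up when $t = O(N)$. The plan to close the loop is to induct on $t$, exploiting that $\phi'(\nabla\fpoly)\nabla\delta$ inherits a martingale/cancellation structure from the noise through the representation of $\nabla\fpoly$ in Lemma \ref{lem:diffexpand}. A Burkholder--Davis--Gundy moment estimate, analogous to those of Sections \ref{sec:upperbnd} and \ref{sec:furtherestim}, applied at each step of the induction and summed against the heat kernel, is expected to produce a stable recursive bound of the form $\|\delta(\cdot,t)\|_\infty \le \|\delta(\cdot,t-1)\|_\infty + o(1/N)$, yielding $\delta = o(1)$ uniformly on $[-aN, aN]\times[0,bN]$. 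Closing this induction without incurring the naive exponential amplification is where the bulk of the technical work resides.
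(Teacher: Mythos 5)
Your overall framework---introducing $\delta := f - \fpoly - Y$, using the matching of $\phi$ and $\poly$ up to fourth derivatives, and the identity $Y(x,t) - \tfrac12(Y(x-1,t-1) + Y(x+1,t-1)) = \tfrac{16c}{\beta^4}K(x,t)^4$---agrees with the paper, and your treatment of $E_1$ is headed in the right direction (though note a lattice-parity subtlety in your summation by parts in $s$: $p(x-z,t-s)$ and $p(x-z,t-s-1)$ live on complementary sublattices, so $\sum_y |p(y,r)-p(y,r-1)|=2$ rather than $O(1/r)$; one would need to difference across step $2$ in $s$). The proposal breaks exactly at the step you yourself flag as unresolved: the self-referential $\nabla\delta$ contribution. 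The martingale/Burkholder--Davis--Gundy route cannot be applied here, because $\nabla\delta(z,s)$ is adapted to the noise filtration up to time $s-1$ --- it is not a linear combination of independent or martingale-difference increments --- so there is no cancellation to harvest by BDG, and your own estimate shows the crude bound produces an amplification factor $(1+O(N^{-1/4+\epsilon}))^{O(N)}$ that is fatal.

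The paper closes the loop by a much simpler, purely pointwise observation that sidesteps the Duhamel representation altogether. Running the induction on $t$ and Taylor expanding both recursions (as you do), one arrives at $\delta(x,t) = \tfrac{\delta_1+\delta_2}{2} + (\delta_1-\delta_2)B + O(N^{-1-\epsilon})$, where $\delta_i = \delta(x\mp1,t-1)$ and $B$ is a realization-dependent quantity built from $\fpoly_i$, $Y_i$, $\delta_i$, and the $C^6$ control on $\psi$, which on the good event satisfies $|B|<1/4$ for $N$ large. Writing the right side as $\delta_1\bigl(\tfrac12+B\bigr) + \delta_2\bigl(\tfrac12-B\bigr)$, both coefficients are nonnegative and sum to one, so this is a convex combination, and hence $|\delta(x,t)| \le \max\{|\delta_1|,|\delta_2|\} + O(N^{-1-\epsilon})$ --- there is no amplification factor at all. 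Iterating from $\delta(\cdot,0)\equiv 0$ gives $|\delta(x,t)| \le t\cdot O(N^{-1-\epsilon}) = O(N^{-\epsilon})$ uniformly for $t\le bN$. This convexity observation is the idea your proposal lacks; without it the induction does not close.
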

\begin{proof}
Let $\Omega_2:= \Omega \cap \Omega_{Y} \cap \Omega_{4}$, where $\Omega$ is from Corollary \ref{cor:diffcontrol}, $\Omega_Y$ is from Lemma \ref{lem:Ydiff}, and $\Omega_4$ is from Corollary \ref{cor:Xd4}. We will prove the claim for all $(x,t)$ satisfying the constraint 
\begin{align}\label{eq:xtrestriction}
|x| + |t|\le \min\{aN,bN\}.
\end{align}
Note that any such $(x,t)$ is automatically in $[-aN, aN]\times[0,bN]$, but the converse is not true. The remaining points in this rectangle can be handled simply by repeating the whole argument with $a$ and $b$ both replaced by $a+b$ (and replacing the event $\Omega_2$ by the corresponding event for the rectangle $[-(a+b)N, (a+b)N]\times[-(a+b)N, (a+b)N]$). 

Define
\[
\delta(x,t) := f(x,t)-\fpoly(x,t) - Y(x,t).
\]
We will prove by induction on $t$ that on $\Omega_2$, for all $(x,t)$ satisfying \eqref{eq:xtrestriction}, we have 
\begin{align}\label{eq:induc}
|\delta(x,t)| &\le N^{-1-\epsilon/2} t,     
\end{align}
provided that  $N\ge N_0$, where $N_0$ is a deterministic threshold depending only on $a$, $b$, $\epsilon$, $\psi$, and the law of the noise variables. We will choose $N_0$ later.

Throughout, we will work under the assumption that $\Omega_2$ holds. Fix some $(x,t)$ satisfying \eqref{eq:xtrestriction}, and assume that \eqref{eq:induc} has been proved for up to time $t-1$. (Note that~\eqref{eq:induc} holds trivially when $t=0$, since $\delta(x,0)=0$ for all $x$.) Define
\begin{align*}
&f_1 := f(x-1,t-1), \ \ f_2 := f(x+1, t-1),\\
&\fpoly_1 := \fpoly(x-1,t-1), \ \ \fpoly_2 := \fpoly(x+1, t-1),\\
&Y_1 := Y(x-1,t-1), \ \ Y_2 := Y(x+1,t-1),\\
&\delta_1 := \delta(x-1,t-1), \ \ \delta_2 := \delta(x+1,t-1).
\end{align*}
Now, if $(x,t)$ satisfies \eqref{eq:xtrestriction}, so does $(x-1,t-1)$ and $(x+1,t-1)$. Thus, by the induction hypothesis, $|\delta_1|$ and $|\delta_2|$ are bounded above by $N^{-1-\epsilon/2} (t-1)$, and so,  
\begin{align}\label{eq:delta12}
\max\{|\delta_1|, |\delta_2|\}  \le bN^{-\epsilon/2}.
\end{align}
Next, note that by Corollary \ref{cor:diffcontrol}, Lemma \ref{lem:Ydiff} and the above display, we have 
\begin{align*}
|f_1-f_2| &= |\fpoly_1 + Y_1 +\delta_1 - (\fpoly_2 + Y_2 + \delta_2)|\\
&\le |\fpoly_1 - \fpoly_2| + |Y_1-Y_2| + |\delta_1| + |\delta_2| \lesssim N^{-\epsilon/2}.
\end{align*}
By \eqref{eq:frec2} and \eqref{eq:phiderivs}, and the assumption that $\psi$ is $C^6$ in a neighborhood of the origin, this allows us to apply Taylor expansion to deduce that
\begin{align}
f(x,t) &= \frac{f_1+f_2}{2} + \frac{\beta}{8}(f_1-f_2)^2 + \biggl(-\frac{\beta^3}{192}+c\biggr)(f_1-f_2)^4 \notag \\
& \qquad \qquad + C(x,t)(f_1-f_2)^6 + \frac{1}{N^{1/4}} y(x,t) - \frac{\log m(\beta N^{-1/4})}{\beta}, \label{eq:fbound}
\end{align}
where $C(x,t)$ is a number that depends on $x$, $t$ and the particular realization of the noise variables, satisfying 
\begin{align}\label{eq:cxt}
|C(x,t)|\lesssim 1,
\end{align}
provided that $N\ge N_1$, where $N_1$ is a deterministic threshold depending only on $a$, $b$, $\epsilon$, $\psi$, and the law of the noise variables. The fact that $|C(x,t)| \lesssim 1$ comes from the assumption that $\phi$ is in $C^6$ so $\phi^{(6)}$ will be finite in a compact interval around $0$. We are computing $\phi(f_1 - f_2)$ where $f_1 - f_2 \lesssim N^{-\epsilon/2}$ by our inductive hypothesis.Thus, we can safely apply the Taylor expansion and use the fact that $\phi^{(6)}$ is bounded in a compact neighborhood of $0$.

Similarly, by Corollary \ref{cor:diffcontrol}, Lemma \ref{lem:Ydiff}, and Taylor expansion using \eqref{eq:polyrec} and \eqref{eq:polyderivs}, we have  
\begin{equation} 
\begin{aligned} 
&\biggl|\fpoly(x,t) - \biggl(\frac{\fpoly_1 + \fpoly_2}{2} + \frac{\beta}{8} (\fpoly_1-\fpoly_2)^2  -\frac{\beta^3}{192}(\fpoly_1-\fpoly_2)^4 \notag \\
& \qquad \qquad + \frac{1}{N^{1/4}} y(x,t) - \frac{\log m(\beta N^{-1/4})}{\beta}\biggr)\biggr|\notag \\
&\lesssim (\fpoly_1-\fpoly_2)^6 \lesssim N^{-3/2 + 6\epsilon}\lesssim N^{-1-\epsilon}. \label{eq:polybound}
\end{aligned}
\end{equation}
But by Corollary \ref{cor:diffcontrol} and Lemma \ref{lem:Ydiff},
\begin{align}
&|(\fpoly_1-\fpoly_2)^2  - (\fpoly_1+Y_1-\fpoly_2-Y_2)^2| \notag \\
&\le |Y_1-Y_2| |2(\fpoly_1-\fpoly_2) + Y_1-Y_2|\lesssim N^{-1-\epsilon}, \label{eq:poly1}
\end{align}
and similarly,
\begin{align}\label{eq:poly2}
&|(\fpoly_1-\fpoly_2)^4  - (\fpoly_1+Y_1-\fpoly_2-Y_2)^4| \lesssim N^{-1-\epsilon}, 
\end{align}
and furthermore,
\begin{align}\label{eq:poly21}
&(\fpoly_1 + Y_1 - \fpoly_2 - Y_2)^6 \lesssim N^{-1-\epsilon}.
\end{align}
Finally, note that by Corollary \ref{cor:Xd4} and equations~\eqref{eq:fourthpower} and \eqref{eq:poly2},
\begin{equation}
\begin{aligned} 
Y(x,t) - \frac{Y_1+Y_2}{2} &= \frac{16}{\beta^4}cK(x,t)^4\notag \\
&= \frac{16}{\beta^4} c \biggl(\frac{X_1-X_2}{X_1+X_2}\biggr)^4 + O(N^{-1-\epsilon})\notag \\
&= c(\fpoly_1 - \fpoly_2)^4 + O(N^{-1-\epsilon})\notag \\
&= c(\fpoly_1+Y_1-\fpoly_2-Y_2)^4 + O(N^{-1-\epsilon}).\label{eq:poly3}
\end{aligned}
\end{equation}
Using \eqref{eq:cxt}, \eqref{eq:poly1}, \eqref{eq:poly2},  and \eqref{eq:poly21}  in \eqref{eq:polybound}, we get
\begin{align*}
&\biggl|\fpoly(x,t) + Y(x,t) - \biggl(\frac{\fpoly_1+ Y_1 + \fpoly_2+Y_2}{2} \\
&\qquad + \frac{\beta}{8} (\fpoly_1+Y_1-\fpoly_2-Y_2)^2  + \biggl(-\frac{\beta^3}{192} + c\biggr)(\fpoly_1+Y_1-\fpoly_2-Y_2)^4 \notag \\
& \qquad + C(x,t)(\fpoly_1 + Y_1 - \fpoly_2 - Y_2)^6 + \frac{1}{N^{1/4}} y(x,t) - \frac{\log m(\beta N^{-1/4})}{\beta}\biggr)\biggr| \lesssim N^{-1 - \epsilon}. 
\end{align*}
Combining this with \eqref{eq:fbound} (and recalling that $f_i=\fpoly_i + Y_i + \delta_i$ for $i=1,2$), we get
\begin{align*}
|\delta(x,t)| &\le \biggl|\frac{\delta_1+\delta_2}{2} \\
&\qquad + \frac{\beta}{8} ((\fpoly_1 + Y_1 + \delta_1 - \fpoly_2 - Y_2 - \delta_2)^2 - (\fpoly_1 + Y_1 - \fpoly_2-Y_2)^2)\\
&\qquad + \biggl(-\frac{\beta^3}{192} + c\biggr)((\fpoly_1+Y_1+\delta_1-\fpoly_2-Y_2-\delta_2)^4 \\
&\qquad \qquad \qquad - (\fpoly_1+Y_1-\fpoly_2-Y_2)^4)\\
&\qquad + C(x,t)((\fpoly_1 + Y_1 + \delta_1 - \fpoly_2 - Y_2 - \delta_2)^6\\
&\qquad \qquad \qquad - (\fpoly_1 + Y_1 - \fpoly_1 - Y_2)^6)\biggr| + O(N^{-1-\epsilon}).
\end{align*}
Notice that the term inside the absolute values on the right can be written as
\begin{align*}
\frac{\delta_1+\delta_2}{2} + (\delta_1-\delta_2) B = \delta_1\biggl(\frac{1}{2} + B\biggr) + \delta_2\biggl(\frac{1}{2}- B\biggr),
\end{align*}
where $B$ is a quantity which, by Corollary \ref{cor:diffcontrol}, Lemma \ref{lem:Ydiff}, inequality~\eqref{eq:cxt}, and inequality~\eqref{eq:delta12}, is less than $1/4$, provided that $N\ge N_2$, where $N_2$ is a deterministic threshold depending only on $a$, $b$, $\epsilon$, $\psi$, and the law of the noise variables. So, if $N\ge \max\{N_1, N_2\}$, we have that
\[
|\delta(x,t)|\le \max\{|\delta_1|, |\delta_2|\} + O(N^{-1-\epsilon}).
\]
Finally, note that the $O(N^{-1-\epsilon})$ error term is bounded above by $N^{-1-\epsilon/2}$ if $N\ge N_3$ where $N_3$ is a deterministic threshold depending only on $a$, $b$, $\epsilon$, $\psi$, and the law of the noise variables. Thus, if our original choice of $N_0$ is $\max\{N_1, N_2, N_3\}$, the induction step goes through, completing the proof. 
\end{proof}


\section{Concentration of the renormalization term} \label{sec:concrenom}
In this section, we will prove that $Y(x,t)$ behaves like a constant multiple of $t$, and will evaluate that constant. The first step is the following law of large numbers.
\begin{lmm} \label{lem:LLN}
Fix some $a>0$, $b>0$ and $\epsilon \in (0,1/100)$. Then there is an event $\Omega_{YLLN}$ with $\P(\Omega_{YLLN}) = 1-o(1)$, such that on $\Omega_{YLLN}$, we have that for all $(x,t)\in [-aN, aN]\times[0,bN]$,
\begin{equation*}
    |Y(x,t) - \mathbb{E}(Y(x,t))| = o(1),
\end{equation*}
where the $o(1)$ term is uniform in $(x,t)$.
\end{lmm}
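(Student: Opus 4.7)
The plan is to exploit the local dependence of $K(z,s)$ on the noise, namely the fact that $K(z,s)$ is a function only of those $\xi(w,r)$ with $s - N^\epsilon \le r \le s$. Writing
\begin{equation*}
Y(x,t) = \sum_{s=1}^t A_s, \qquad A_s := \frac{16c}{\beta^4}\sum_{z\in\Z} p(x-z,t-s)\, K(z,s)^4,
\end{equation*}
I would partition the time axis into blocks $I_k := [k\lceil N^\epsilon\rceil, (k+1)\lceil N^\epsilon\rceil)$ and set $B_k := \sum_{s \in I_k,\, s \le t} A_s$. Then $B_k$ is measurable with respect to the noise at times in $[(k-1)\lceil N^\epsilon\rceil, (k+1)\lceil N^\epsilon\rceil)$, so $B_{k_1}$ and $B_{k_2}$ are independent whenever $|k_1 - k_2| \ge 2$. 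Splitting into even- and odd-indexed blocks, both $\sum_{k \text{ even}} (B_k - \E B_k)$ and $\sum_{k \text{ odd}} (B_k - \E B_k)$ become sums of mean-zero, independent random variables.

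Next I would apply the Burkholder--Davis--Gundy inequality, exactly as in the proof of Lemma \ref{lmm:MartBound}, to each of these two independent sums, obtaining for any $p \ge 2$
\begin{equation*}
\Bigl\|\sum_{k \text{ even}} (B_k - \E B_k)\Bigr\|_{L^p}^2 \lesssim \sum_{k \text{ even}} \|B_k\|_{L^p}^2,
\end{equation*}
and likewise for the odd sum. By Minkowski, $\|B_k\|_{L^p} \le \sum_{s \in I_k} \|A_s\|_{L^p} \le N^\epsilon \sup_{z,s}\|K(z,s)\|_{L^{4p}}^4$. Applying the moment bound $\|K(z,s)\|_{L^{4p}}^2 \lesssim N^{-1/2}$ from Lemma \ref{lem:Kbnd} (over a rectangle slightly larger than $[-aN,aN]\times[0,bN]$, to account for the $N^\epsilon$ time-shift) yields $\|A_s\|_{L^p} \lesssim N^{-1}$, hence $\|B_k\|_{L^p} \lesssim N^{\epsilon - 1}$. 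Since the number of blocks intersecting $[0,bN]$ is $O(N^{1-\epsilon})$, this gives $\|Y(x,t) - \E Y(x,t)\|_{L^p} \lesssim N^{(\epsilon-1)/2}$ for every fixed $p$.

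Finally, for any $\alpha \in (0, (1-\epsilon)/2)$, Markov's inequality produces
\begin{equation*}
\P\bigl(|Y(x,t) - \E Y(x,t)| > N^{-\alpha}\bigr) \lesssim N^{-p((1-\epsilon)/2 - \alpha)},
\end{equation*}
and choosing $p$ large enough beats a union bound over the $O(N^2)$ integer lattice points $(x,t) \in [-aN,aN] \times [0,bN]$. The event $\Omega_{YLLN}$ is then the intersection of the resulting good events for each $(x,t)$. The main conceptual step is the block-independence decomposition; once that is in place, the $L^p$ estimates follow the same martingale template used for Lemmas \ref{lmm:MartBound} and \ref{lem:Ydiff}, so no new obstacle is expected. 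A minor bookkeeping point to be careful about is that the dependence window of $B_k$ extends $N^\epsilon$ backwards in time, which forces the even/odd splitting (a single grouping would still have nearest-neighbor dependence); this is precisely why blocks of width $N^\epsilon$, rather than $1$, are necessary.
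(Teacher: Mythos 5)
Your proof is correct, but it uses a genuinely different decomposition than the paper's, so it is worth comparing the two.

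The paper subsamples in \emph{both} space and time: it writes $Y(x,t)=\sum_{l_1,l_2}Z_{l_1,l_2}$, where $Z_{l_1,l_2}$ restricts $z$ to a residue class mod $\lceil 2N^\epsilon\rceil$ and $s$ to a residue class mod $\lceil N^\epsilon\rceil$. Within each of the $\sim N^{2\epsilon}$ residue classes the terms $K(z,s)^4-\bar m(s)$ are genuinely i.i.d.~(the noise windows underlying $K(z,s)$ have diameter $\lesssim 2N^\epsilon$ in space and $N^\epsilon$ in time), so BDG applies and the relevant bound is on $\sum_{z,s}p(x-z,t-s)^2$, which is $O(\sqrt N)$ (the expected number of intersections of two walks). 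This yields $\|\tilde Z_{l_1,l_2}\|_{L^{2p}}\lesssim N^{-3/4}$ and, after Minkowski over the $N^{2\epsilon}$ classes, $\|Y-\E Y\|_{L^{2p}}\lesssim N^{2\epsilon-3/4}$.

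Your version subsamples only in time, grouping into blocks of width $\lceil N^\epsilon\rceil$ and splitting even/odd indices to restore independence (correctly noting that a single grouping has nearest-neighbor dependence, because $K(z,s)$ looks $N^\epsilon$ steps into the past). The spatial sum is absorbed by Minkowski together with $\sum_z p(x-z,t-s)=1$, giving $\|A_s\|_{L^p}\lesssim N^{-1}$ and $\|B_k\|_{L^p}\lesssim N^{\epsilon-1}$; BDG over the $O(N^{1-\epsilon})$ blocks gives $\|Y-\E Y\|_{L^p}\lesssim N^{(\epsilon-1)/2}$. This is weaker than the paper's $N^{2\epsilon-3/4}$ — you forgo the gain from the $\ell^2$-decay of the heat kernel because the $K(z,\cdot)$'s at nearby $z$ (for the same $s$) are correlated, so a naive BDG in $z$ is unavailable without the spatial subsampling the paper performs. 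But both exponents are negative for $\epsilon<1/100$, a large enough $p$ beats the union bound over $O(N^2)$ lattice points, and nothing downstream in Section~\ref{sec:mainproof} requires a rate. Your argument is marginally cruder but simpler, since you avoid the $\sum p^2$ estimate (equation~\eqref{eq:rwintersect}) entirely.
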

\begin{proof}
Fix $(x,t)$, and write 
\begin{equation*}
Y(x,t) = \sum_{l_1,l_2} Z_{l_1,l_2},
\end{equation*}
where 
$$
Z_{l_1,l_2} = \frac{16 c}{\beta^4} \sum_{\substack{z \ \equiv \  l_1 \ (\mathrm{mod} \ \lceil 2N^{\epsilon} \rceil)\\ s\ \equiv \ l_2 \ (\mathrm{mod} \lceil N^{\epsilon} \rceil)}} p(x-z,t-s) K(z,s)^4.
$$
Let $\overline{m}(s) = \E(K(z,s)^4)$, which does not depend on $z$, as observed earlier. Define 
\begin{equation*}
\tilde{Z}_{l_1,l_2} := Z_{l_1,l_2} - \E(Z_{l_1,l_2}) = \frac{16 c}{\beta^4} \sum_{\substack{z \ \equiv \ l_1 \ (\mathrm{mod} \ \lceil 2N^{\epsilon} \rceil)\\ s\ \equiv \ l_2 \ (\mathrm{mod} \lceil N^{\epsilon} \rceil)}} p(x-z,t-s) (K(z,s)^4 - \overline{m}(s)).
\end{equation*}
Note that this is a sum of independent random variables with mean zero. Thus, as in the proof of Lemma \ref{lmm:MartBound}, we can apply the Burkholder--Davis--Gundy inequality and the Minkowski inequality to get the bound 
\begin{equation*}
(\mathbb{E}|\tilde{Z}_{l_1,l_2}|^{2p})^{1/p} \lesssim \sum_{\substack{z \ \equiv \ l_1 \ (\mathrm{mod} \ \lceil 2N^{\epsilon} \rceil)\\ s\ \equiv \ l_2 \ (\mathrm{mod} \lceil N^{\epsilon} \rceil)}} p(x-z,t-s)^2 (\mathbb{E}|K(z,s)^4 - \overline{m}(s)|^{2p})^{1/p}
\end{equation*}
From the proof of Lemma~\ref{lem:Ydiff}, recall that $\mathbb{E}|K(y,s)^4 - \overline{m}(s)|^{2p} \lesssim N^{-2p}$. Also, 
\begin{align*}
\sum_{\substack{z \ \equiv \ l_1 \ (\mathrm{mod} \ \lceil 2N^{\epsilon} \rceil)\\ s\ \equiv \ l_2 \ (\mathrm{mod} \lceil N^{\epsilon} \rceil)}} p(x-z,t-s)^2 &\le \sum_{z\in Z} \sum_{s=0}^t p(z,s)^2,
\end{align*}
which is the expected number of intersections of two simple symmetric random walks up to time $t$, when both are started at the origin. By \eqref{eq:rwintersect}, this is bounded above by a constant times $\sqrt{N}$. Combining, we get

$$
(\mathbb{E}|\tilde{Z}_{l_1,l_2}|^{2p})^{1/p} \lesssim N^{-3/2}.
$$
Thus, by Minkowski's inequality,
\begin{align*}
\|Y(x,t)-\ee(Y(x,t))\|_{L^{2p}} &\le \sum_{l_1,l_2} \|\tilde{Z}_{l_1,l_2}\|_{L^{2p}}\\
&\le N^{2\epsilon} N^{-3/4}.
\end{align*}
This allows us to choose a sufficiently high $p$, apply Markov's inequality and a union bound, and get the desired result.
\end{proof}


\begin{lmm} \label{lem:Yconc}
Fix some $a>0$, $b>0$, and $\epsilon \in (0,1/100)$. Then for any $(x,t)\in [-aN, aN]\times[0,bN]$, we have  $\E(Y(x,t)) = Vt/N + o(1)$, where $V$ is the deterministic constant defined in equation~\eqref{eq:vdef} and the $o(1)$ term is uniformly bounded in $(x,t)$ in the above region. 
\end{lmm}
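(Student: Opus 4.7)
The plan is to compute $\E[Y(x,t)]$ directly by exploiting two observations.  First, by translation invariance of the noise, $\overline{m}(s) := \E[K(z,s)^4]$ is independent of $z$.  Second, for each fixed $s$, $\sum_{z\in\Z} p(x-z, t-s) = 1$.  Combining these with the definition of $Y$,
\[
\E[Y(x,t)] = \frac{16c}{\beta^4} \sum_{s=1}^t \overline{m}(s),
\]
so the task reduces to computing $\overline{m}(s)$ to leading order in $1/N$ and summing.

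To compute $\overline{m}(s)$, I would expand $K(z,s)^4$ directly via $K(z,s) = \frac{1}{2}\sum_{(w,r)} \xi(w,r)\Delta(z-w,s-r)$, where $(w,r)$ ranges over $w\in\Z$ and $\max(1,s-N^\epsilon)\le r\le s$.  Since the noise variables $\xi(w,r)$ are i.i.d.~with mean zero, only two combinatorial patterns contribute to the fourth moment: either all four summation indices coincide (giving a contribution proportional to $\E[\xi^4]\sum \Delta^4$), or they partition into two distinct equal pairs in one of three ways (giving a contribution proportional to $\E[\xi^2]^2\bigl[(\sum \Delta^2)^2 - \sum \Delta^4\bigr]$).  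The relevant moments of $\xi = e^{\beta N^{-1/4}y}/m(\beta N^{-1/4}) - 1$ are obtained by expanding in $\beta N^{-1/4}$, yielding $\E[\xi^2] = \mu_2\beta^2 N^{-1/2} + O(N^{-3/4})$ and $\E[\xi^4] = \mu_4\beta^4 N^{-1} + O(N^{-5/4})$.  This gives $\overline{m}(s)$ as $\beta^4/(16N)$ times a linear combination of $\sum \Delta^k$ sums over the truncated range $0\le v\le N^\epsilon$ (after substituting $u=z-w$, $v=s-r$), plus a controlled error.

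Next, for $s\ge N^\epsilon$, the substitution makes the inner sums independent of $(z,s)$.  By Lemma \ref{lmm:DeltSum}, $\sum_u \Delta(u,v)^2 \lesssim v^{-3/2}$, so the infinite sums $\sum_{u\in\Z, v\ge 0}\Delta(u,v)^2$ and $\sum_{u\in\Z, v\ge 0}\Delta(u,v)^4$ that appear in the definition \eqref{eq:vdef} of $V$ are convergent, and the truncation error from replacing $N^\epsilon$ by $\infty$ vanishes as $N\to\infty$.  Hence the $s\ge N^\epsilon$ portion of the sum contributes $(t/N) \cdot [\text{explicit leading constant}] + o(1)$, where the leading constant should match $V$ after the algebraic identification.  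The boundary portion $1\le s< N^\epsilon$ contributes $O(N^{\epsilon-1}) = o(1)$, as seen from a crude Burkholder--Davis--Gundy bound (as in Lemma \ref{lem:Kbnd}) giving $\overline{m}(s) = O(N^{-1})$ uniformly in $s$, summed over at most $N^\epsilon$ values of $s$.

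The main obstacle is the fourth-moment combinatorics in the second paragraph: one must correctly account for both the diagonal contribution $\E[\xi^4]\sum \Delta^4$ and the three pair partitions $\E[\xi^2]^2[(\sum \Delta^2)^2 - \sum \Delta^4]$, and then verify via algebraic manipulation that the resulting coefficient agrees with the constant $V$ defined in \eqref{eq:vdef}.  A secondary technicality is the uniform control of $\overline{m}(s)$ at small $s$ and of the truncation errors, both of which are handled by the summability of $v^{-3/2}$.
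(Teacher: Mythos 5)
Your proposal follows essentially the same route as the paper's proof: expand $\E[K(z,s)^4]$ using independence of the $\xi$'s, estimate the leading moments $\E[\xi^2]$ and $\E[\xi^4]$, pass from the truncated $\Delta$-sums to the untruncated ones using the summability afforded by Lemma~\ref{lmm:DeltSum}, and then sum $p(x-z,t-s)$ over $z$ and $s$ to produce the factor $t$. All of that is right, and the control of the small-$s$ boundary and truncation errors is handled just as the paper does.

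There is, however, one step in your plan that would fail as stated, and it cuts against the paper rather than against you. You correctly account for the three two-pair pairings in the fourth moment of a sum of independent mean-zero variables, which gives
\[
\E[K^4]=\frac{1}{16}\Bigl[\sum\Delta^4\bigl(\E[\xi^4]-3(\E[\xi^2])^2\bigr)+3\Bigl(\sum\Delta^2\,\E[\xi^2]\Bigr)^2\Bigr].
\]
The identity displayed in the paper's proof of this lemma is instead
\[
\E[K^4]=\frac{1}{16}\Bigl[\sum\Delta^4\bigl(\E[\xi^4]-(\E[\xi^2])^2\bigr)+\Bigl(\sum\Delta^2\,\E[\xi^2]\Bigr)^2\Bigr],
\]
which omits the multiplicity $3$, and the constant $V$ in \eqref{eq:vdef} is defined consistently with that latter (non-standard) identity. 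Consequently, if you carry your computation through faithfully you will land on the drift constant
\[
c\Bigl[\sum\Delta^4(\mu_4-3\mu_2^2)+3\Bigl(\sum\Delta^2\,\mu_2\Bigr)^2\Bigr],
\]
which does not equal the $V$ of \eqref{eq:vdef}. This affects only the explicit value of the deterministic renormalization constant and not the structure of the argument (or of the main theorem), but your planned ``verify the coefficient agrees with $V$'' step will not close against the paper's displayed formula. You should either note the discrepancy or recompute $V$ with the factors of $3$ included.
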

\begin{proof}
First, note that since the $\xi(z,s)$ variables are independent and have mean zero, 
\begin{equation*}
\begin{aligned}
    \mathbb{E}(K(x,t)^4) & = \frac{1}{16}\sum_{z\in \Z}\sum_{t - N^{\epsilon} \le s \le t} \Delta(x-z,t-s)^4 (\mathbb{E}(\xi(z,s)^4) - (\mathbb{E}(\xi(z,s)^2))^2) \\ 
    & \qquad + \frac{1}{16}\biggl(\sum_{z\in \Z}\sum_{ t - N^{\epsilon} \le s \le t} \Delta(x-z,t-s)^2 \mathbb{E}(\xi(z,s)^2)\biggr)^2.
\end{aligned}
\end{equation*}
Since $m$ is finite in a neighborhood of zero and $\E(y(z,s))=0$, it follows that $m(\theta)=1+ O(\theta^2)$ for $\theta$ close to zero. Thus, for any positive integer $k$,
\begin{align*}
&\E(\xi(z,s)^k) = \frac{1}{m(\beta N^{-1/4})^k} \ee[(e^{\beta N^{-1/4} y(z,s)} - m(\beta N^{-1/4}))^k]\\
&= (1+O(N^{-1/2}))\E\biggl[\biggl(\beta N^{-1/4}y(z,s) + \frac{\beta^2}{2} N^{-1/2} y(z,s)^2 + \cdots - O(N^{-1/2})\biggr)^k\biggr]\\
&= (1+O(N^{-1/2}))(\E(\beta^k N^{-k/4} y(z,s)^k) + o(N^{-k/4}))\\
&= \beta^k N^{-k/4} \mu_k + o(N^{-k/4}), 
\end{align*}
where the exchange of expectation and series sum can be easily justified using the finiteness of $m$ in a neighborhood of zero and the dominated convergence theorem. In particular, $\mathbb{E}(\xi(z,s)^2) = \beta^2 \mu_2 N^{-1/2} +o(N^{-1/2})$ and $\mathbb{E}(\xi(z,s)^4) = \beta^4\mu_4 N^{-1} + o(N^{-1})$.

Now, by Lemma \ref{lmm:DeltSum}, 
\begin{align*}
\sum_{r \ge N^{\epsilon}} \sum_{z} \Delta(z,r)^2 &\lesssim  N^{-\epsilon/2},\ \ \ \sum_{r \ge 0} \sum_{z} \Delta(z,r)^2 \lesssim 1.
\end{align*}
Furthermore, using the fact that every $|\Delta(y,r)| \le 2$, we have 
\begin{align*}
    \sum_{r \ge N^{\epsilon}} \sum_z \Delta(z,r)^4 &\le 4 \sum_{r \ge N^{\epsilon}} \sum_z \Delta(z,r)^2 \lesssim N^{- \epsilon/2},\\
    \sum_{r \ge 0} \sum_z \Delta(z,r)^4 &\le 4 \sum_{r\ge 0} \sum_z \Delta(z,r)^2 \lesssim 1.
\end{align*}
Combining the above observations, we see that 
\begin{align*}
&\sum_z \sum_{t - N^{\epsilon} \le s \le t} \Delta(x-z,t-s)^4 (\mathbb{E}(\xi(z,s)^4) - (\mathbb{E}(\xi(z,s)^2))^2)\\
&\qquad \qquad - \sum_z\sum_{r\ge 0}  \Delta(z,r)^4 N^{-1} \beta^4(\mu_4 - \mu_2^2) \\ 
&= \sum_z \sum_{t - N^{\epsilon} \le s \le t} \Delta(x-z,t-s)^4[(\mathbb{E}(\xi(z,s)^4) - (\mathbb{E}(\xi(z,s)^2))^2) - N^{-1}\beta^4(\mu_4 - \mu_2^2)] \\
&\qquad \qquad - \sum_z \sum_{s > N^{\epsilon}} \Delta(z,s)^4 N^{-1}\beta^4(\mu_4 - \mu_2^2)
\\ 
&= o(N^{-1}) + O(N^{-1-\epsilon/2}) = o(N^{-1}). 
\end{align*}
Similarly,
$$
\sum_z\sum_{t- N^{\epsilon} \le s \le t} \Delta(x-z,t-s)^2 \mathbb{E}(\xi(z,s)^2) - \sum_{z} \sum_{r\ge0 }\Delta(z,r)^2 N^{-1/2} \beta^2 \mu_2 = o(N^{-1/2}).
$$
Combining all of the above, we finally have 
\begin{align*}
    \mathbb{E}(K(x,t)^4) &=\frac{\beta^4}{16N} \biggl[ \sum_{z,r} \Delta(z,r)^4 (\mu_4 - \mu_2^2)  + \biggl(\sum_{z,r} \Delta(z,r)^2 \mu_2\biggr)^2\biggr] + o(N^{-1})\\
    &= \frac{\beta^4V}{16Nc} + o(N^{-1}).
\end{align*}
To complete the proof, note that by \eqref{eq:ydef} and the above display, 
\begin{align*}
\ee(Y(x,t)) &= \frac{16c}{\beta^4} \sum_{z\in \Z}\sum_{s=1}^t p(x-z,t-s) \ee(K(z,s)^4)\\
&= V\sum_{z\in \Z}\sum_{s=1}^t p(x-z,t-s) (1+o(N^{-1}))\\
&= Vt + o(1),
\end{align*}
where the last line holds because the sum of $p(z,s)$ over all $z$ equals $1$ for any given $s$, and $t=O_n$.
\end{proof}

\section{Completing the proof of Theorem \ref{thm:mainresult}}\label{sec:mainproof}
In this section, we will complete the proof of Theorem \ref{thm:mainresult} under the assumption that $\beta \ne 0$. The $\beta=0$ case will be handled in Section \ref{sec:betazero}. Recall the function $\tf_N$ defined in equation \eqref{eq:tfndef} at points $(x,t)$ such that $x$ is an integer multiple of $N^{-1/2}$ and $t$ is an integer multiple of $N^{-1}$. As promised in the sentence below equation~\eqref{eq:tfndef0}, we now describe the method of extending the domain of $\tf_N$ to the whole of $\R \times \R_+$ by linear interpolation.

Let $g$ be any function defined on $\Z \times \Z_+$. We will now describe a way of extending the domain of $g$ to $\R \times \R_+$ by linear interpolation. It will be clear how the same prescription will apply to functions defined on $N^{-1/2}\Z \times N^{-1}\Z_+$, such as $\tf_N$. 


Firstly, we construct a graph $G$ with vertex set $V$ consisting of the set of points $(x,t) \in \mathbb{Z} \times \Z_+$ such that $x+t$ is even. (One might also consider those such that $x+t$ is odd; the important thing to notice is that the value of $\tf_N$ at the even points are completely independent of those at the odd points.) There are two types of edges in the set $E$ of edges of this graph. The first type connects $(x,t)$ to $(x-1,t-1)$ or $(x+1,t-1)$. The second type connects $(x,t)$ to $(x+2,t)$.

The graph $G$ gives a triangulation of the plane $\mathbb{R} \times \mathbb{R}_+$. Let $p$ be a point in $\mathbb{R} \times \mathbb{R}_+$. It is contained in some triangle $T$ in the triangulation given by the graph $G$. Let $a$,$b$, and $c$ be the boundary vertices of $T$. Then $p$ can be written uniquely in barycentric coordinates as a convex combination of $a$, $b$ and $c$; namely, $p= s_1 a + s_2 b +s_3 c$ with $s_1\ge0$, $s_2\ge 0$, $s_3\ge 0$ and $s_1 + s_2 + s_3 =1$. We  define $g(p):= s_1 g(a) + s_2 g(b) + s_3 g(c)$. It is easy to see that this linear interpolation is well-defined even if $p$ belongs to multiple triangles (i.e., even if $p$ is on the boundary of some triangle). 

Using the above technique, we extend the domain of $\tf_N$ to $\R \times \R_+$. Next, recall the function $\fpoly$ defined in Section \ref{sec:polymer}. Define $\tf_N^{\textup{poly}}: N^{-1/2}\Z \times N^{-1}Z\to \rr$ as 
\[
\tf_N^{\textup{poly}}(x,t) := \fpoly(\sqrt{N}x, Nt).
\]
Extend the domain of $\tf_N^{\textup{poly}}$ to $\R\times \R_+$ using the above interpolation method. Now fix any compact region $K\subseteq \R\times \R_+$. We claim that 
\begin{equation}\label{eq:prob1}
\sup_{(x,t)\in K} |\tf_N(x,t) - \tf_N^{\textup{poly}}(x,t)| \to 0
\end{equation}
in probability as $N\to\infty$. By the nature of the interpolation, it suffices to replace the supremum above by the supremum over all $(x,t)\in K_N := K\cap (N^{-1/2}\Z \times N^{-1}Z_+)$. Take any $(x,t)\in K_N$. Let $x' := \sqrt{N} x$ and $t' := Nt$. Then 
\begin{align*}
\tf_N(x,t) -  \tf_N^{\textup{poly}}(x,t) &= f_N(x', t') - \frac{Vt'}{N} - \frac{t'}{\beta}\log m(N^{-1/4} \beta) - t'  \psi(0,0)- \fpoly(x',t').
\end{align*}
By \eqref{eq:tfndef}, \eqref{eq:fxtdef} and the assumption that $\psi(0,0)=0$, this gives
\begin{align*}
\tf_N(x,t) -  \tf_N^{\textup{poly}}(x,t)  &= f(x',t') -\frac{Vt'}{N} - \fpoly(x',t'),
\end{align*}
where $f$ is the function defined in equation \eqref{eq:fxtdef}. By Theorem \ref{thm:errorprop2}, Lemma \ref{lem:LLN}, Lemma \ref{lem:Yconc}, and the above identity, we get \eqref{eq:prob1}. From \eqref{eq:prob1}, it follows that $\tf_N -  \tf_N^{\textup{poly}}\to 0$ in probability as a sequence of $C(\R\times \R_+)$-valued random variables, under the topology of uniform convergence on compact sets. Combining this with the fact that $\exp(\beta \tf_N^{\textup{poly}})$ converges in law to the solution of the stochastic heat equation \eqref{eq:she} with multiplicative noise (by \cite[Theorem 2.7]{albertsetal14b}, and the fact that having noise variables with variance $\mu_2$ at inverse temperature $\beta N^{-1/4}$ in the polymer model is equivalent to having noise variables with variance $1$ at inverse temperature $\beta \sqrt{\mu_2}$), completes the proof of Theorem \ref{thm:mainresult}. We remark here that though the statement of  \cite[Theorem 2.7]{albertsetal14b} only discusses the point-to-point partition function, their proof also holds verbatim for the point-to-line partition function, as mentioned in  \cite[Section 6.2]{albertsetal14b}. 


\section{The $\beta=0$ case}\label{sec:betazero} 
The $\beta=0$ case is much simpler than the $\beta\ne0$ case, so we will just briefly outline the modifications needed for the proof to go through. First, we need to make some changes to the definitions from Section \ref{sec:polymer}. First, we take $\poly \equiv 0$ and treat $\beta^{-1}\log m(\beta N^{-1/4})$ as zero, so that $\fpoly$ now satisfies the simple recursion 
\begin{equation*}
    \fpoly(x,t) = \frac{1}{2}(\fpoly(x-1,t-1)+ \fpoly(x+1,t-1)) + N^{-1/4}y(x,t),
\end{equation*}
and $f$ satisfies 
\begin{equation*}
    f(x,t) =  \psi(f(x-1,t-1), f(x+1,t-1)) + N^{-1/4}y(x,t).
\end{equation*}
The explicit expression for $\fpoly(x,t)$ is now
\begin{equation*}
    \fpoly(x,t) = \sum_{z \in \mathbb{Z}} \sum_{s=1}^t p(z-x,t-s) N^{-1/4} y(x,t),
\end{equation*}
which is just a linear combination of i.i.d.~random variables and therefore much easier to analyze. 
If we now define $K(x,t)$ as
\begin{equation*}
    K(x,t):= \sum_{z \in \mathbb{Z}} \sum_{t -N^{\epsilon}\le s\le t} \Delta(z-x,t-s) N^{-1/4} y(x,t)
\end{equation*}
and $Y(x,t)$ as
\begin{equation*}
    Y(x,t):= c \sum_{z \in \mathbb{Z}} \sum_{s=1}^t  p(z-x, t-s) K(z,s)^4,
\end{equation*}
then using similar (but simpler) arguments as before, we can show that 
\begin{equation*}
    f(x,t) = \fpoly(x,t) + Y(x,t) + \delta(x,t),
\end{equation*}
where $\delta(x,t) = o(1)$ uniformly in $(x,t)\in [-aN, aN]\times[0,bN]$ with high probability. (Note that the new $K$ is not quite the limit of the $K$ from earlier as $\beta \to0$. It is the limit of the old $K$ divided by $\beta$ as $\beta \to 0$.)

As before, we can follow the steps of Theorem \ref{thm:errorprop2} to get an inductive relationship on $\delta$. There are two major error propagation terms that do not get incorporated into a multiplicative factor of~$\delta$.  The first comes from the difference $\fpoly_1 - \fpoly_2$. 
From direct computation, we have 
\begin{equation*}
    \fpoly_1 - \fpoly_2 = \sum_{z \in \mathbb{Z}} \sum_{s=1}^t \Delta(z-x, t-s) N^{-1/4} y(x,t).
\end{equation*}
This is a linear combination of i.i.d.~random variables, and so one can use standard moment bounds (much like in the proofs of Theorem \ref{thm:upperbnd} and Lemma \ref{lmm:MartBound}) to assert that the above quantity should be $O(N^{-1/4 + \epsilon})$ uniformly in a large rectangle $[-aN,aN] \times [0,bN]$. 

The second error term that one has to deal with is the term 
\begin{equation*}
    (f_1 - f_2)^4 - K(x,t)^4.
\end{equation*}
This is $O(N^{-1-\epsilon})$ via the methods in the proof of Corollary \ref{cor:Xd4}. With these two estimates in hand, one can follow the proof of \ref{thm:errorprop2} verbatim to establish the required inductive estimate. 

Finally, one has to show that $Y(x,t) = Vt/N + o(1)$. This is done by the arguments of Section \ref{sec:concrenom}, which go through without any trouble. 

%
%

\begin{acks}[Acknowledgments]
A.A.~thanks Izumi Okada for useful discussions. S.C.~thanks Kevin Yang, Hao Shen, Persi Diaconis  and Peter Friz for helpful comments and references. Both of us thank the two anonymous referees for numerous helpful remarks.
\end{acks}
\begin{funding}
 The first author was supported by NSF grant 2102842.
 The second author was supported in part by NSF grants 1855484, 2113242, and 2153654.
\end{funding}

\bibliographystyle{imsart-number}
\bibliography{myrefs-KPZ}


\end{document}